\documentclass[11pt]{article}

\usepackage{amsfonts}
\usepackage{amscd}
\usepackage{amssymb}
\usepackage{amsthm}
\usepackage{amsmath} 
\usepackage{hyperref}
\usepackage{pictexwd}

 \theoremstyle{plain}
\newtheorem{thm}{Theorem}[section]
\newtheorem{lemma}[thm]{Lemma}
\newtheorem{prop}[thm]{Proposition}

\newtheorem{remark}[thm]{Remark}
\theoremstyle{definition}

\numberwithin{equation}{section}

\setlength{\evensidemargin}{1in}
\addtolength{\evensidemargin}{-1in}
\setlength{\oddsidemargin}{1.5in}
\addtolength{\oddsidemargin}{-1.5in}
\setlength{\topmargin}{1in}
\addtolength{\topmargin}{-1.5in}

\setlength{\textwidth}{16cm}
\setlength{\textheight}{23cm}

\def\cB{\mathcal{B}}

\def\TL{{T\!L}}

\def\CC{\mathbb{C}}

\def\ZZ{\mathbb{Z}}

\def\fg{\mathfrak{g}}

\def\fsl{\mathfrak{sl}}  
\def\fgl{\mathfrak{gl}}

\def\End{\mathrm{End}} 
 
\def\Hom{\mathrm{Hom}} 
 
\def\id{\mathrm{id}}


\makeatletter
\renewcommand{\@makefnmark}{\mbox{\textsuperscript{}}}
\makeatother

\title{Commuting families in\\
Hecke and Temperley-Lieb Algebras}
\author{
Tom Halverson \\ Department of Mathematics \\ Macalester College \\
Saint Paul, MN 55105 \\ halverson@macalester.edu
\and
Manuela Mazzocco \\ Department of Mathematics\\
University of Wisconsin\\ Madison, WI 53706 \\
\and 
Arun Ram \\
Department of Mathematics\\ University of Wisconsin\\
Madison, WI 53706 \\ ram@math.wisc.edu \\
and \\
Department of Mathematics and Statistics \\
University of Melbourne \\
Parkville VIC 3010 Australia}
\date{}

\begin{document}

\maketitle

\begin{abstract}
We define analogs of the Jucys-Murphy elements for the affine Temperley-Lieb algebra
and give their explicit expansion in terms of the basis of planar Brauer diagrams.  
These Jucys-Murphy elements are a family of commuting elements in the affine Temperley-Lieb algebra, and
we compute their eigenvalues on the generic irreducible representations.  We show that they 
come from  Jucys-Murphy elements in  the affine Hecke algebra of type A, which
in turn come from the Casimir element of the quantum group $U_h\fgl_n$.
We also give the explicit specializations of these results to the finite Temperley-Lieb algebra.
\end{abstract}

\footnote{AMS Subject Classifications: Primary 20G05; Secondary 16G99, 81R50, 82B20.}

\begin{section}{Introduction}

The ``Jucys-Murphy elements'' are a family of commuting elements in the group algebra
of the symmetric group.  In characteristic 0, these elements have enough distinct eigenvalues
to give a full analysis of the representation theory of the
symmetric group \cite{OV}.  Even in positive characteristic these elements are powerful tools
\cite{K}.  Similar elements are used in the Hecke algebras of type A and, in a strong sense,
it is these elements that control the beautiful connections between the modular representation
theory of Hecke algebras of type A and the Fock space representations of the affine quantum group
(see \cite{Ar} and \cite{Gr}).

Since the Temperley-Lieb algebra is a quotient of the Hecke algebra of type A it inherits a commuting family of elements from the Hecke algebra.   In order to use these elements for modular representation theory it is important to have good control of the expansion in terms of the standard 
basis of planar Brauer diagrams.  In this paper we study this question, in the more general setting
of the affine Temperley-Lieb algebras.
Specifically, we analyze a convenient choice of a
commuting family of elements in the affine Temperley-Lieb algebra.  Our main result,
Theorem \ref{ExpansionInDiagrams},
is an explicit expansion of these elements in the standard basis.  The fact that,
in the Templerley-Lieb algebra, these
elements have integral coefficients is made explicit in Remark \ref{integrality}.  The import of this
result is that this commuting family can be used to attack questions in modular representation 
theory.

In Section 3 we review the Schur-Weyl duality setup of Orellana and Ram \cite{OR} which 
(following the ideas in \cite{Re}) explains how commuting families in centralizer algebras 
arise naturally from Casimir elements.   We explain, in detail, the cases that lead to commuting 
families in the affine Hecke algebras of type A and the affine Temperley-Lieb algebra.  
One new consequence of our analysis is an explanation of the ``special'' relation that
is used in one of the Temperley-Lieb algebras of Graham and Lehrer \cite{GL4}.  In our context,
this relation appears naturally from the Schur-Weyl duality 
(see Proposition \ref{SchurWeylAffineTL}).
Using the knowledge of eigenvalues of Casimir elements we compute the eigenvalues of the 
commuting families in the affine Hecke algebra and in the affine Temperley-Lieb algebra
in the generic irreducible representations (analogues of the Specht, or Weyl, modules).

The research of M.\ Mazzocco was partially supported by a Mark Mensink Honors 
Research Award at the University of Wisconsin, Madison.  She was an undergraduate
researcher participating in research and teaching initiatives partly supported by the National
Science Foundation under grant DMS-0353038.  The research of A.\ Ram was also
partially supported by this award.  A significant portion of this research was done during a residency 
of A.\ Ram at the  Max-Planck-Institut f\"ur Mathematik (MPI) in Bonn.  He thanks the MPI for support,
hospitality, and a wonderful working environment.  The research of T.\ Halverson was partially supported  by the National Science Foundation under grant DMS-0100975. 

\end{section}

\begin{section}{Affine braid groups, Hecke and Temperley-Lieb algebras}

\subsection{The Affine Braid Group $\tilde {\cal B}_k$ }

The \emph{affine braid group} is the group
$\tilde {\cal B}_k$ of affine braids with $k$ strands (braids with a flagpole).
The group $\tilde {\cal B}_k$ is presented
by generators $T_1,T_2,\ldots,T_{k-1}$ and $X^{\varepsilon_1}$,
\begin{equation}\label{AffineBraidGenerators}
T_i = 
\beginpicture
\setcoordinatesystem units <.5cm,.5cm>         
\setplotarea x from -5 to 3.5, y from -2 to 2    
\put{${}^i$} at 0 1.2      %
\put{${}^{i+1}$} at 1 1.2      %
\put{$\bullet$} at -3 .75      %
\put{$\bullet$} at -2 .75      %
\put{$\bullet$} at -1 .75      %
\put{$\bullet$} at  0 .75      
\put{$\bullet$} at  1 .75      %
\put{$\bullet$} at  2 .75      %
\put{$\bullet$} at  3 .75      %
\put{$\bullet$} at -3 -.75          %
\put{$\bullet$} at -2 -.75          %
\put{$\bullet$} at -1 -.75          %
\put{$\bullet$} at  0 -.75          
\put{$\bullet$} at  1 -.75          %
\put{$\bullet$} at  2 -.75          %
\put{$\bullet$} at  3 -.75          %
\plot -4.5 1.25 -4.5 -1.25 /
\plot -4.25 1.25 -4.25 -1.25 /
\ellipticalarc axes ratio 1:1 360 degrees from -4.5 1.25 center 
at -4.375 1.25
\put{$*$} at -4.375 1.25  
\ellipticalarc axes ratio 1:1 180 degrees from -4.5 -1.25 center 
at -4.375 -1.25 
\plot -3 .75  -3 -.75 /
\plot -2 .75  -2 -.75 /
\plot -1 .75  -1 -.75 /
\plot  2 .75   2 -.75 /
\plot  3 .75   3 -.75 /
\setquadratic
\plot  0 -.75  .05 -.45  .4 -0.1 /
\plot  .6 0.1  .95 0.45  1 .75 /
\plot 0 .75  .05 .45  .5 0  .95 -0.45  1 -.75 /
\endpicture
\qquad\hbox{and}\qquad
X^{\varepsilon_1} = 
~~\beginpicture
\setcoordinatesystem units <.5cm,.5cm>         
\setplotarea x from -5 to 3.5, y from -2 to 2    
\put{$\bullet$} at -3 0.75      %
\put{$\bullet$} at -2 0.75      %
\put{$\bullet$} at -1 0.75      %
\put{$\bullet$} at  0 0.75      
\put{$\bullet$} at  1 0.75      %
\put{$\bullet$} at  2 0.75      %
\put{$\bullet$} at  3 0.75      %
\put{$\bullet$} at -3 -0.75          %
\put{$\bullet$} at -2 -0.75          %
\put{$\bullet$} at -1 -0.75          %
\put{$\bullet$} at  0 -0.75          
\put{$\bullet$} at  1 -0.75          %
\put{$\bullet$} at  2 -0.75          %
\put{$\bullet$} at  3 -0.75          %
\plot -4.5 1.25 -4.5 -0.13 /
\plot -4.5 -0.37   -4.5 -1.25 /
\plot -4.25 1.25 -4.25  -0.13 /
\plot -4.25 -0.37 -4.25 -1.25 /
\ellipticalarc axes ratio 1:1 360 degrees from -4.5 1.25 center 
at -4.375 1.25
\put{$*$} at -4.375 1.25  
\ellipticalarc axes ratio 1:1 180 degrees from -4.5 -1.25 center 
at -4.375 -1.25 
\plot -2 0.75  -2 -0.75 /
\plot -1 0.75  -1 -0.75 /
\plot  0 0.75   0 -0.75 /
\plot  1 0.75   1 -0.75 /
\plot  2 0.75   2 -0.75 /
\plot  3 0.75   3 -0.75 /
\setlinear
\plot -3.3 0.25  -4.1 0.25 /
\ellipticalarc axes ratio 2:1 180 degrees from -4.65 0.25  center 
at -4.65 0 
\plot -4.65 -0.25  -3.3 -0.25 /
\setquadratic
\plot  -3.3 0.25  -3.05 .45  -3 0.75 /
\plot  -3.3 -0.25  -3.05 -0.45  -3 -0.75 /
\endpicture
\end{equation},
with relations
\begin{equation} \label{AffineBraidRelations}
\begin{array}{ll} 
X^{\varepsilon_1}T_1X^{\varepsilon_1}T_1
=T_1X^{\varepsilon_1}T_1X^{\varepsilon_1} \\
X^{\varepsilon_1}T_i = T_iX^{\varepsilon_1},
&\hbox{for $i>1$,} \\
T_{i}T_{j}=T_{j}T_{i}, &\hbox{if $\left| i-j\right| >1$}, \\
T_{i}T_{i + 1}T_{i}=T_{i+1}T_{i}T_{i+1}, &\hbox{if $1\leq i\leq k-2$,} 
\end{array}
\end{equation}

For $1\le i\le k$ define
\begin{equation}\label{BraidMurphy}
X^{\varepsilon_i}=T_{i-1}T_{i-2}\cdots T_2T_1 
X^{\varepsilon_1}T_1T_2\cdots T_{i-1} = 
~~\beginpicture
\setcoordinatesystem units <.5cm,.5cm>         
\setplotarea x from -5 to 3.5, y from -2 to 2    
\put{${}^i$} at 1 1.2 
\put{$\bullet$} at -3 0.75      %
\put{$\bullet$} at -2 0.75      %
\put{$\bullet$} at -1 0.75      %
\put{$\bullet$} at  0 0.75      
\put{$\bullet$} at  1 0.75      %
\put{$\bullet$} at  2 0.75      %
\put{$\bullet$} at  3 0.75      %
\put{$\bullet$} at -3 -0.75          %
\put{$\bullet$} at -2 -0.75          %
\put{$\bullet$} at -1 -0.75          %
\put{$\bullet$} at  0 -0.75          
\put{$\bullet$} at  1 -0.75          %
\put{$\bullet$} at  2 -0.75          %
\put{$\bullet$} at  3 -0.75          %
\plot -4.5 1.25 -4.5 -0.13 /
\plot -4.5 -0.37   -4.5 -1.25 /
\plot -4.25 1.25 -4.25  -0.13 /
\plot -4.25 -0.37 -4.25 -1.25 /
\ellipticalarc axes ratio 1:1 360 degrees from -4.5 1.25 center 
at -4.375 1.25
\put{$*$} at -4.375 1.25  
\ellipticalarc axes ratio 1:1 180 degrees from -4.5 -1.25 center 
at -4.375 -1.25 
\plot -3 0.75  -3 -0.1 /
\plot -2 0.75  -2 -0.1 /
\plot -1 0.75  -1 -0.1 /
\plot  0 0.75   0 -0.1 /
\plot -3 -.35  -3 -0.75 /
\plot -2 -.35   -2 -0.75 /
\plot -1 -.35   -1 -0.75 /
\plot  0 -.35    0 -0.75 /
\plot  2 0.75   2 -0.75 /
\plot  3 0.75   3 -0.75 /
\setlinear
\plot -3.2 0.25  -4.1 0.25 /
\plot -2.8 0.25  -2.2 0.25 /
\plot -1.8 0.25  -1.2 0.25 /
\plot -.8 0.25  -.2 0.25 /
\plot  .2 0.25  .5 0.25 /
\plot -3.3 -.25  .5 -.25 /
\ellipticalarc axes ratio 2:1 180 degrees from -4.65 0.25  center 
at -4.65 0 
\plot -4.65 -0.25  -3.3 -0.25 /
\setquadratic
\plot  .5 0.25  .9 .45  1 0.75 /
\plot  .5  -0.25  .9 -0.45 1 -0.75 /
\endpicture.
\end{equation}
By drawing pictures of the corresponding affine braids it is easy to check that 
\begin{equation}\label{XiCommuteInAffine}
X^{\varepsilon_i}X^{\varepsilon_j}=X^{\varepsilon_j}X^{\varepsilon_i},
\quad\hbox{for $1\le i,j\le k$,}
\end{equation} 
so that the elements $X^{\varepsilon_1}, \ldots, X^{\varepsilon_k}$ are a commuting family
for $\tilde{\cal B}_k$.  Thus
$X=\langle X^{\varepsilon_i}\ |\ 1\le i\le k\rangle$
is an abelian subgroup of $\tilde {\cal B}_k$.  The free abelian group generated by
$\varepsilon_1,\ldots, \varepsilon_k$ is $\ZZ^k$ and 
\begin{equation}
X=\{ X^\lambda \ |\ \lambda\in \ZZ^k\}
\quad\hbox{where}\quad
X^\lambda=(X^{\varepsilon_1})^{\lambda_1}
(X^{\varepsilon_2})^{\lambda_2} \cdots 
(X^{\varepsilon_k})^{\lambda_k},
\end{equation}
for $\lambda = \lambda_1\varepsilon_1+\cdots+\lambda_k\varepsilon_k$ in $\ZZ^k$.

\begin{remark}\label{AlternateBk}
An alternate presentation of $\tilde\cB_k$ can be given using the generators
$T_0,T_1,\ldots, T_{k-1}$ and $\tau$ where
$$\tau = X^{-\varepsilon_1}T_1^{-1}\cdots T_{k-1}^{-1} = {\beginpicture
\setcoordinatesystem units <0.35cm,0.3cm>         
\setplotarea x from -.5 to 6, y from -2.5 to 3.5    
\linethickness=0.5pt                          
\put{$\bullet$} at 1 -1 \put{$\bullet$} at 1 2
\put{$\bullet$} at 2 -1 \put{$\bullet$} at 2 2
\put{$\bullet$} at 3 -1 \put{$\bullet$} at 3 2
\put{$\bullet$} at 4 -1 \put{$\bullet$} at 4 2
\put{$\bullet$} at 5 -1 \put{$\bullet$} at 5 2
\put{$\bullet$} at 6 -1 \put{$\bullet$} at 6 2
\plot 0 3.5 0 -2.5 /
\plot .25 3.5 .25 -2.5 /
\ellipticalarc axes ratio 1:1 360 degrees from 0 3.5 center 
at .124 3.6
\put{$*$} at  .124 3.6  
\ellipticalarc axes ratio 1:1 220 degrees from 0 -2.5 center 
at .124  -2.6 
\plot 1 -1 2 2  /
\plot 2 -1 3 2  /
\plot 3 -1 4 2  /
\plot 4 -1 5 2  /
\plot 5 -1 6 2 /
\setquadratic
\plot 1 2 -.5 1.5 -.2 .5  /
\plot .5 .2 .6 .1 1.1 .1 /
\plot 1.6 .1 1.8 .1 2.0 .1 /
\plot 2.6 .1 2.8 .1 3.2 .1 /
\plot 3.5 .1 3.7 .1 4.1 0 /
\plot 4.5 0 4.6 0 5.0 -.1 /
\plot 5.5 -.3 5.6 -.4 6 -1 /
\endpicture}
\qquad\hbox{and}\qquad
T_0=\tau^{-1}T_1\tau = 
{\beginpicture
\setcoordinatesystem units <0.35cm,0.3cm>         
\setplotarea x from -.5 to 6, y from -2.5 to 3.5    
\linethickness=0.5pt                          
\put{$\bullet$} at 1 -1 \put{$\bullet$} at 1 2
\put{$\bullet$} at 2 -1 \put{$\bullet$} at 2 2
\put{$\bullet$} at 3 -1 \put{$\bullet$} at 3 2
\put{$\bullet$} at 4 -1 \put{$\bullet$} at 4 2
\put{$\bullet$} at 5 -1 \put{$\bullet$} at 5 2
\put{$\bullet$} at 6 -1 \put{$\bullet$} at 6 2
\plot 0 3.5 0 -2.5 /
\plot .25 3.5 .25 -2.5 /
\ellipticalarc axes ratio 1:1 360 degrees from 0 3.5 center 
at .124 3.6
\put{$*$} at  .124 3.6  
\ellipticalarc axes ratio 1:1 220 degrees from 0 -2.5 center 
at .124  -2.6 
\plot 2 -1 2 2  /
\plot 3 -1 3 2  /
\plot 4 -1 4 2  /
\plot 5 -1 5 2  /
\setquadratic
\plot 1 2 -.5 1.6 -.2 1.1 /

\plot .5 .85   .825 .7  1.15 .45      / 
\plot  1.15 .45  1.5 .2      1.8 .1    / 

\plot .5 .1     .825  .2    1.0 .3  /
\plot 1.3 .55  1.5   .7     1.8 .8 /


\plot 2.2 .9 2.4 .9 2.8 1.0 /
\plot 3.2 1.0 3.4 1.0 3.8 1.1 /
\plot 4.2 1.1 4.4 1.1 4.8 1.2 /
\plot 5.2 1.3 5.4 1.4 6 2 /
\plot 1 -1 -.5 -.5 -.2 0 /

\plot 2.2 .1 2.4 .1 2.8 .1 /
\plot 3.2 .1 3.4 .1 3.8 0 /
\plot 4.2 0 4.4 -.07 4.8 -.2  /
\plot 5.2 -.3 5.4 -.4 6 -1 /
\endpicture}.
$$
\end{remark}

\begin{remark}\label{SLPGL}  
The affine braid group $\tilde\cB_k$ is the affine braid group of type $GL_k$.
The affine braid groups of type $SL_k$ and $PGL_k$ are the subgroup
$$\tilde \cB_Q = \langle T_0,T_1,\ldots, T_{k-1}\rangle
\qquad\hbox{and the quotient}\qquad
\tilde \cB_P = \frac{\tilde\cB_k}{\langle \tau^k\rangle},\qquad\hbox{respectively.}
$$
Then $\tau^k = X^{-\varepsilon_1}X^{-\varepsilon_2}\cdots X^{-\varepsilon_k}$ is
a central element of $\tilde\cB_k$, 
$\tau T_i \tau^{-1} = T_{i+1}$ (where the indices are taken mod $n$), and 
$\tau X^{\varepsilon_i}\tau^{-1} = X^{\varepsilon_{i+1}}$
and
$$Z(\tilde\cB_k) = \langle \tau^k\rangle,\qquad
\tilde\cB_k = \langle \tau \rangle \ltimes \tilde\cB_Q,\qquad\qquad
\tilde \cB_P= \langle\bar \tau\rangle\ltimes \tilde\cB_Q.$$
In $\tilde\cB_k$ we have $\langle \tau \rangle \cong \ZZ$, and  $\bar \tau\in\tilde \cB_P$ is defined to be the image of  $\tau$ under the homomorphism $\ZZ\to \ZZ/k\ZZ$ so that $\langle \bar\tau \rangle \cong  \ZZ/k\ZZ$.
\end{remark}

\subsection{The Temperley-Lieb algebra $\TL_k(n)$}

A Temperley-Lieb diagram on $k$ dots  is a graph with  $k$ dots in the top row, $k$ dots in
the bottom row, and $k$ edges pairing the dots such that the graph is planar (without edge 
crossings).  
For example,
$$
d_1 = {\beginpicture
\setcoordinatesystem units <0.5cm,0.2cm> 
\setplotarea x from 1 to 7, y from 0 to 2    
\linethickness=0.5pt
\put{$\bullet$} at 1 -1 \put{$\bullet$} at 1 2
\put{$\bullet$} at 2 -1 \put{$\bullet$} at 2 2
\put{$\bullet$} at 3 -1 \put{$\bullet$} at 3 2
\put{$\bullet$} at 4 -1 \put{$\bullet$} at 4 2
\put{$\bullet$} at 5 -1 \put{$\bullet$} at 5 2
\put{$\bullet$} at 6 -1 \put{$\bullet$} at 6 2
\put{$\bullet$} at 7 -1 \put{$\bullet$} at 7 2
\setquadratic
\plot 1 2  1.5 1 2 2 /
\plot 4 2 5.5 .5 7 2 /
\plot 5 2 5.5 1.25 6 2 /
\plot 2 -1 3.5 .5 5 -1 /
\plot 3 -1 3.5 -.25 4 -1 /
\plot 6 -1 6.5 0 7 -1 /
\setlinear
\plot 3 2 1 -1  /
\endpicture}
\qquad\hbox{and}\qquad
d_2 = {\beginpicture
\setcoordinatesystem units <0.5cm,0.2cm> 
\setplotarea x from 1 to 7, y from -1 to 2    
\linethickness=0.5pt
\put{$\bullet$} at 1 2 \put{$\bullet$} at 1 -1
\put{$\bullet$} at 2 2 \put{$\bullet$} at 2 -1
\put{$\bullet$} at 3 2 \put{$\bullet$} at 3 -1
\put{$\bullet$} at 4 2 \put{$\bullet$} at 4 -1
\put{$\bullet$} at 5 2 \put{$\bullet$} at 5 -1
\put{$\bullet$} at 6 2 \put{$\bullet$} at 6 -1
\put{$\bullet$} at 7 2 \put{$\bullet$} at 7 -1
\setquadratic
\plot 3 2 3.5 1 4 2 /
\plot 5 2 5.5 1 6 2 /
\plot 5 -1 5.5 0 6 -1 /
\plot 1 -1  1.5 0 2 -1 /
\setlinear
\plot 1 2 3 -1 /
\plot 2 2 4 -1 /
\plot 7 2 7 -1 /
\endpicture}.
$$
are Temperley-Lieb diagrams on 7 dots.
The composition $d_1 \circ d_2$  of two diagrams $d_1, d_2 \in T_k$ is the diagram obtained by placing $d_1$ above $d_2$ and identifying the bottom vertices of $d_1$ with the top dots of $d_2$ removing any connected components that live entirely in the middle row.  If $T_k$ is the set of Temperley-Lieb
diagrams on $k$ dots then the Temperley-Lieb algebra 
$\TL_k(n)$ is the associative algebra with basis $T_k$,
$$
\TL_k(n) = \hbox{span}\{d \in T_k\} \qquad \hbox{with multiplication defined by}\qquad
d_1 d_2 = n^\ell (d_1 \circ d_2),
$$
where $\ell$ is the number of blocks removed from the middle row when constructing the composition $d_1 \circ d_2$ and $n$ is a fixed element of the base ring.  For example, using the diagrams $d_1$ and $d_2$ above, we have
$$
d_1 d_2 = 
\begin{array}{c}
 {\beginpicture
\setcoordinatesystem units <0.5cm,0.2cm> 
\setplotarea x from 1 to 7, y from 0 to 2    
\linethickness=0.5pt
\put{$\bullet$} at 1 -1 \put{$\bullet$} at 1 2
\put{$\bullet$} at 2 -1 \put{$\bullet$} at 2 2
\put{$\bullet$} at 3 -1 \put{$\bullet$} at 3 2
\put{$\bullet$} at 4 -1 \put{$\bullet$} at 4 2
\put{$\bullet$} at 5 -1 \put{$\bullet$} at 5 2
\put{$\bullet$} at 6 -1 \put{$\bullet$} at 6 2
\put{$\bullet$} at 7 -1 \put{$\bullet$} at 7 2
\setquadratic
\plot 1 2  1.5 1 2 2 /
\plot 4 2 5.5 .5 7 2 /
\plot 5 2 5.5 1.25 6 2 /
\plot 2 -1 3.5 .5 5 -1 /
\plot 3 -1 3.5 -.25 4 -1 /
\plot 6 -1 6.5 0 7 -1 /
\setlinear
\plot 3 2 1 -1  /
\endpicture} \\
{\beginpicture
\setcoordinatesystem units <0.5cm,0.2cm> 
\setplotarea x from 1 to 7, y from -1 to 2    
\linethickness=0.5pt
\put{$\bullet$} at 1 2 \put{$\bullet$} at 1 -1
\put{$\bullet$} at 2 2 \put{$\bullet$} at 2 -1
\put{$\bullet$} at 3 2 \put{$\bullet$} at 3 -1
\put{$\bullet$} at 4 2 \put{$\bullet$} at 4 -1
\put{$\bullet$} at 5 2 \put{$\bullet$} at 5 -1
\put{$\bullet$} at 6 2 \put{$\bullet$} at 6 -1
\put{$\bullet$} at 7 2 \put{$\bullet$} at 7 -1
\setquadratic
\plot 3 2 3.5 1 4 2 /
\plot 5 2 5.5 1 6 2 /
\plot 5 -1 5.5 0 6 -1 /
\plot 1 -1  1.5 0 2 -1 /
\setlinear
\plot 1 2 3 -1 /
\plot 2 2 4 -1 /
\plot 7 2 7 -1 /
\endpicture}
\end{array}
= n \ 
 {\beginpicture
\setcoordinatesystem units <0.5cm,0.2cm> 
\setplotarea x from 1 to 7, y from 0 to 2    
\linethickness=0.5pt
\put{$\bullet$} at 1 -1 \put{$\bullet$} at 1 2
\put{$\bullet$} at 2 -1 \put{$\bullet$} at 2 2
\put{$\bullet$} at 3 -1 \put{$\bullet$} at 3 2
\put{$\bullet$} at 4 -1 \put{$\bullet$} at 4 2
\put{$\bullet$} at 5 -1 \put{$\bullet$} at 5 2
\put{$\bullet$} at 6 -1 \put{$\bullet$} at 6 2
\put{$\bullet$} at 7 -1 \put{$\bullet$} at 7 2
\setquadratic
\plot 1 2  1.5 1 2 2 /
\plot 4 2 5.5 .5 7 2 /
\plot 5 2 5.5 1.25 6 2 /
\plot 1 -1 1.5 -.25 2 -1 /
\plot 5 -1  5.5 -.5 6 -1 /
\plot 4 -1 5.5 0 7 -1 /
\setlinear
\plot 3 2 3 -1  /
\endpicture}.
$$
The algebra $\TL_k(n)$ is presented by generators
\begin{equation}\label{TLgenerator}
e_i = {\beginpicture
\setcoordinatesystem units <0.4cm,0.18cm>         
\setplotarea x from 1 to 10, y from 0 to 3    
\linethickness=0.5pt                          
\put{$\bullet$} at 1 -1 \put{$\bullet$} at 1 2
\put{$\bullet$} at 2 -1 \put{$\bullet$} at 2 2
\put{$\cdots$} at 3 -1 \put{$\cdots$} at 3 2
\put{$\bullet$} at 4 -1 \put{$\bullet$} at 4 2
\put{$\bullet$} at 5 -1 \put{$\bullet$} at 5 2
\put{$\bullet$} at 6 -1 \put{$\bullet$} at 6 2
\put{$\scriptstyle{i}$} at 4.9 3.5 
\put{$\scriptstyle{i+1}$} at 6.1 3.5 
\put{$\bullet$} at 7 -1 \put{$\bullet$} at 7 2
\put{$\cdots$} at 8 -1 \put{$\cdots$} at 8 2
\put{$\bullet$} at 9 -1 \put{$\bullet$} at 9 2
\put{$\bullet$} at 10 -1 \put{$\bullet$} at 10 2
\plot 1 -1 1 2  /
\plot 2 -1 2 2  /
\plot 4 -1 4 2  /
\plot 7 -1 7 2  /
\plot  9 -1 9 2  /
\plot 10 -1 10 2 /
\setquadratic
\plot 5 -1 5.5 0 6 -1 /
\plot 5 2 5.5 1.0 6 2 /
\endpicture}, \qquad 1\le i\le k-1,\qquad\hbox{}
\end{equation}
\begin{equation}\label{TLpresentation}
\begin{array}{ll}
\hbox{and relations}\qquad e_{i}^{2}=ne_{i},
\qquad e_{i}e_{i\pm 1}e_{i}=e_{i}, \qquad \hbox{and} 
\qquad
e_{i}e_{j}=e_{j}e_{i},\ \ \hbox{if $\left| i-j\right| >1$} \\
\end{array}
\end{equation}
(see \cite[Lemma 2.8.4]{GHJ}).

\begin{remark}  In the definition of the Temperley-Lieb algebra, and for other algebras defined
in this paper, the base ring could be any one of several useful rings (e.g. $\CC$, $\CC(q)$, 
$\CC[[h]]$, $\ZZ[q,q^{-1}]$, $\ZZ[n]$ or localizations of these at special primes).  The most 
useful approach is to view the results of
computations as valid over any ring $R$ with $n,q,h\in R$ such that the formulas make sense.
\end{remark}

\subsection{The Surjection ${\tilde H}_k(q) \mapsto \TL_k(n)$}

The \emph{affine Hecke algebra} $\tilde H_k$ is the quotient of the group algebra of the affine braid group $\CC \tilde \cB_k$ by the relations
\begin{equation} \label{AffineHeckeAlgebra}
T_{i}^{2}=(q-q^{-1})T_{i}+1,
\qquad\qquad
\hbox{so that}\qquad
\CC\tilde\cB_k \longrightarrow \tilde H_k
\end{equation}
is a surjective homomorphism ($q$ is a fixed element of the base ring).
The affine Hecke algebra $\tilde H_k$ is the affine Hecke algebra of type $GL_k$.
The \emph{affine Hecke algebras of types $SL_k$ and $PGL_k$} are, respectively, 
the quotients ${\tilde H}_Q$ and ${\tilde H}_P$ of the 
group algebras of $\tilde \cB_Q$ and $\tilde \cB_P$ (see Remark \ref{SLPGL}) by the relations 
\eqref{AffineHeckeAlgebra}.

The \emph{Iwahori-Hecke algebra} is the subalgebra $H_k$ of $\tilde H_k$ generated
by $T_1,\ldots, T_{k-1}$.
In the Iwahori-Hecke algebra  $H_k$,  define
\begin{equation}  \label{TiFromEi}
e_i = q -  T_i, \qquad\hbox{for $i=1,2,\ldots, k-1$.}
\end{equation}
Direct calculations show that $e_i^2 = (q+q^{-1})e_i$ and that
$e_1e_2e_1 = e_1$ and $e_2e_1e_2 = e_2$ if and only if
\begin{equation}\label{TLKernel}
q^{3}- q^{2}T_1 - q^{2}T_2+q^{}T_1T_2+q^{}T_2T_1- T_1T_2T_1 = 0.
\end{equation}
Thus, setting $n = [2] = q+q^{-1}$, there are surjective algebra homomorphisms given by
\begin{equation}\label{AffineToTL}
\begin{matrix}
\psi:& \tilde H_k(q) &\longrightarrow &H_k(q) &\longrightarrow &\TL_k(n) \\
& X^{\varepsilon_1} &\longmapsto &1&\longmapsto &1  \\
& T_i &\longmapsto &T_i &\longmapsto &q-e_i.
 \end{matrix}
\end{equation}
The kernel of $\psi$ is generated by the element on the left hand side of equation
\eqref{TLKernel}.  In the notation of Theorem  \ref{Thm:AffineMurphy}, the representations of $H_k$ correspond 
to the case  when  $\mu= \emptyset$. Writing $\tilde H_k^{\lambda/\emptyset}$ as $\tilde H_k^\lambda$,
the element from \eqref{TLKernel} acts as 0  on the irreducible Iwahori-Hecke algebra modules  
${\tilde H}_3^{\beginpicture 
\setcoordinatesystem units <0.1cm,0.1cm> 
\setplotarea x from 0 to 3, y from 0 to 1   
\linethickness=0.5pt                        
\putrectangle corners at 0 0 and 1 1
\putrectangle corners at 1 0 and 2 1
\putrectangle corners at 2 0 and 3 1
\endpicture}$
and
${\tilde H}_3^{\beginpicture 
\setcoordinatesystem units <0.1cm,0.1cm> 
\setplotarea x from 0 to 2, y from -1 to 1   
\linethickness=0.5pt                        
\putrectangle corners at 0 0 and 1 1
\putrectangle corners at 1 0 and 2 1
\putrectangle corners at 0 -1 and 1 0
\endpicture}$, 
and (up to a scalar multiple) it is a projection onto ${\tilde H}_3^{\beginpicture 
\setcoordinatesystem units <0.1cm,0.1cm> 
\setplotarea x from 0 to 1, y from -1 to 1   
\linethickness=0.5pt                        
\putrectangle corners at 0 0 and 1 1
\putrectangle corners at 0 1 and 1 2
\putrectangle corners at 0 -1 and 1 0
\endpicture}$.  

\begin{remark}   There is an alternative surjective homomorphism that instead sends $T_i \mapsto e_i - q^{-1}$.  This alternative surjection has kernel generated by
$$q^{-3}+q^{-2}T_1+q^{-2}T_2+q^{-1}T_1T_2+q^{-1}T_2T_1+T_2T_1T_2.$$
This element is 0 on ${\tilde H}_3^{\beginpicture 
\setcoordinatesystem units <0.1cm,0.1cm> 
\setplotarea x from 0 to 1, y from -1 to 1   
\linethickness=0.5pt                        
\putrectangle corners at 0 0 and 1 1
\putrectangle corners at 0 1 and 1 2
\putrectangle corners at 0 -1 and 1 0
\endpicture}$
and
${\tilde H}_3^{\beginpicture 
\setcoordinatesystem units <0.1cm,0.1cm> 
\setplotarea x from 0 to 2, y from -1 to 1   
\linethickness=0.5pt                        
\putrectangle corners at 0 0 and 1 1
\putrectangle corners at 1 0 and 2 1
\putrectangle corners at 0 -1 and 1 0
\endpicture}$, 
and (up to a scalar multiple) it is a projection onto
${\tilde H}_3^{\beginpicture 
\setcoordinatesystem units <0.1cm,0.1cm> 
\setplotarea x from 0 to 3, y from 0 to 1   
\linethickness=0.5pt                        
\putrectangle corners at 0 0 and 1 1
\putrectangle corners at 1 0 and 2 1
\putrectangle corners at 2 0 and 3 1
\endpicture}$.
\end{remark}

\begin{remark} A priori, there are two different kinds of integrality for the Temperley-Lieb algebra:
coefficients in $\ZZ[n]$ or coefficients in $\ZZ[q,q^{-1}]$ (in terms of the basis of Temperley-Lieb 
diagrams).  The relation between these is as follows.
If
$$[2] = q+q^{-1} = n
\qquad\hbox{then}\qquad q = \frac{1}{2}(n+\sqrt{n^2-4}),
\quad q^{-1} = \frac{1}{2}(n-\sqrt{(n^2-4}),$$
since $q^2-nq+1=0$.  Then
$$[k] = \frac{q^k-q^{-k}}{q-q^{-1}}
= \frac{1}{2^{k-1}} \sum_{m=1}^{(k+1)/2} \binom{k}{2m-1} n^{k-2m+1}(n^2-4)^{m-1}$$
so that $[k]$ is a polynomial in $n$.
The polynomials
$$n^k = (q+q^{-1})^k
\quad\hbox{and}\quad
\{k\} = q^k+q^{-k}
\quad\hbox{and}\quad
[k] = \frac{q^k-q^{-k}}{q-q^{-1}},$$
all form bases of the ring $\CC[(q+q^{-1})]$.  The transition matrix $B$ between
the $[k]$ and the $\{k\}$ is triangular (with 1s on the diagonal)
and the transition matrix $C$ between the $n^k$ and the $\{k\}$
is also triangular (the non zero entries are binomial coefficients). Hence, the transition matrix
$BC^{-1}$ between $[k]$ and $n^k$ has integer entries and so $[k]$ is, in fact, a polynomial
in $n$ with integer coefficients. 
\end{remark}

\subsection{Affine Temperley-Lieb algebras}

The \emph{affine Temperley-Lieb algebra} $T_k^a$ is the diagram algebra generated by
$$
e_0 = {\beginpicture
\setcoordinatesystem units <0.35cm,0.2cm>         
\setplotarea x from -.5 to 6, y from -2.5 to 3.5    
\linethickness=0.5pt                          
\put{$\bullet$} at 1 -1 \put{$\bullet$} at 1 2
\put{$\bullet$} at 2 -1 \put{$\bullet$} at 2 2
\put{$\bullet$} at 3 -1 \put{$\bullet$} at 3 2
\put{$\bullet$} at 4 -1 \put{$\bullet$} at 4 2
\put{$\bullet$} at 5 -1 \put{$\bullet$} at 5 2
\put{$\bullet$} at 6 -1 \put{$\bullet$} at 6 2
\plot 0 3.5 0 -2.5 /
\plot .25 3.5 .25 -2.5 /
\ellipticalarc axes ratio 1:1 360 degrees from 0 3.5 center 
at .124 3.6
\put{$*$} at  .124 3.6  
\ellipticalarc axes ratio 1:1 220 degrees from 0 -2.5 center 
at .124  -2.6 
\plot 2 -1 2 2  /
\plot 3 -1 3 2  /
\plot 4 -1 4 2  /
\plot 5 -1 5 2  /
\setquadratic
\plot 1 2 -.5 1.5 -.2 .95 /
\plot .5 .9 1 .8 1.8 .9 / 
\plot 2.2 .9 2.4 .9 2.8 1.0 /
\plot 3.2 1.0 3.4 1.0 3.8 1.1 /
\plot 4.2 1.1 4.4 1.1 4.8 1.2 /
\plot 5.2 1.3 5.4 1.4 6 2 /
\plot 1 -1 -.5 -.5 -.2 0 /
\plot .5 .1 1 .2 1.8 .1 /
\plot 2.2 .1 2.4 .1 2.8 .1 /
\plot 3.2 .1 3.4 .1 3.8 0 /
\plot 4.2 0 4.4 -.1 4.8 -.3 /
\plot 5.2 -.3 5.4 -.4 6 -1 /
\endpicture}, \quad
e_i = {\beginpicture
\setcoordinatesystem units <0.35cm,0.2cm>         
\setplotarea x from -.5 to 6, y from -2.5 to 3.5    
\linethickness=0.5pt                          
\put{$\bullet$} at 1 -1 \put{$\bullet$} at 1 2
\put{$\bullet$} at 2 -1 \put{$\bullet$} at 2 2
\put{$\bullet$} at 3 -1 \put{$\bullet$} at 3 2
\put{$\bullet$} at 4 -1 \put{$\bullet$} at 4 2
\put{$\bullet$} at 5 -1 \put{$\bullet$} at 5 2
\put{$\scriptstyle{i}$} at 2.9 3.5 
\put{$\bullet$} at 6 -1 \put{$\bullet$} at 6 2
\plot 0 3.5 0 -2.5 /
\plot .25 3.5 .25 -2.5 /
\ellipticalarc axes ratio 1:1 360 degrees from 0 3.5 center 
at .124 3.6
\put{$*$} at  .124 3.6  
\ellipticalarc axes ratio 1:1 220 degrees from 0 -2.5 center 
at .124  -2.6 
\plot 1 -1 1 2  /
\plot 2 -1 2 2  /
\plot 5 -1 5 2  /
\plot 6 -1 6 2  /
\setquadratic
\plot 3 -1 3.5 0 4 -1 /
\plot 3 2 3.5 1.0 4 2 /
\endpicture} \ (1\le i\le k-1), \quad\hbox{and}\quad
\tau = {\beginpicture
\setcoordinatesystem units <0.35cm,0.2cm>         
\setplotarea x from -.5 to 6, y from -2.5 to 3.5    
\linethickness=0.5pt                          
\put{$\bullet$} at 1 -1 \put{$\bullet$} at 1 2
\put{$\bullet$} at 2 -1 \put{$\bullet$} at 2 2
\put{$\bullet$} at 3 -1 \put{$\bullet$} at 3 2
\put{$\bullet$} at 4 -1 \put{$\bullet$} at 4 2
\put{$\bullet$} at 5 -1 \put{$\bullet$} at 5 2
\put{$\bullet$} at 6 -1 \put{$\bullet$} at 6 2
\plot 0 3.5 0 -2.5 /
\plot .25 3.5 .25 -2.5 /
\ellipticalarc axes ratio 1:1 360 degrees from 0 3.5 center 
at .124 3.6
\put{$*$} at  .124 3.6  
\ellipticalarc axes ratio 1:1 220 degrees from 0 -2.5 center 
at .124  -2.6 
\plot 1 -1 2 2  /
\plot 2 -1 3 2  /
\plot 3 -1 4 2  /
\plot 4 -1 5 2  /
\plot 5 -1 6 2 /
\setquadratic
\plot 1 2 -.5 1.5 -.2 .5  /
\plot .5 .2 .6 .1 1.1 .1 /
\plot 1.6 .1 1.8 .1 2.0 .1 /
\plot 2.6 .1 2.8 .1 3.2 .1 /
\plot 3.5 .1 3.7 .1 4.1 0 /
\plot 4.5 0 4.6 0 5.0 -.1 /
\plot 5.5 -.3 5.6 -.4 6 -1 /
\endpicture}.
$$
The generators of $T_k^a$ satisfy $e_i^2 = ne_i$,  
$e_ie_{i\pm1}e_i=e_i$, $\tau e_i\tau^{-1} = e_{i+1}$
(where the indices are taken mod $k$) and
\begin{equation}\label{extraTLrel}
\tau^2e_{k-1} = 
 {\beginpicture
\setcoordinatesystem units <0.35cm,0.2cm>         
\setplotarea x from -.5 to 6, y from -4.5 to 5.5    
\linethickness=0.5pt                          
\put{$\bullet$} at 1 -4 \put{$\bullet$} at 1 5
\put{$\bullet$} at 2 -4 \put{$\bullet$} at 2 5
\put{$\bullet$} at 3 -4 \put{$\bullet$} at 3 5
\put{$\bullet$} at 4 -4 \put{$\bullet$} at 4 5
\put{$\bullet$} at 5 -4 \put{$\bullet$} at 5 5
\put{$\bullet$} at 6 -4 \put{$\bullet$} at 6 5
\put{$\bullet$} at 1 -1 \put{$\bullet$} at 1 2
\put{$\bullet$} at 2 -1 \put{$\bullet$} at 2 2
\put{$\bullet$} at 3 -1 \put{$\bullet$} at 3 2
\put{$\bullet$} at 4 -1 \put{$\bullet$} at 4 2
\put{$\bullet$} at 5 -1 \put{$\bullet$} at 5 2
\put{$\bullet$} at 6 -1 \put{$\bullet$} at 6 2
\plot 0 5.5 0 -4.5 /
\plot .25 5.5 .25 -4.5 /
\ellipticalarc axes ratio 1:1 360 degrees from 0 5.5 center 
at .124 5.6
\put{$*$} at  .124 5.6  
\ellipticalarc axes ratio 1:1 220 degrees from 0 -4.5 center 
at .124  -4.6 
\plot 1 -1 2 2  /
\plot 2 -1 3 2  /
\plot 3 -1 4 2  /
\plot 4 -1 5 2  /
\plot 5 -1 6 2 /
\plot 1 2 2 5  /
\plot 2 2 3 5  /
\plot 3 2 4 5  /
\plot 4 2 5 5  /
\plot 5 2 6 5 /
\plot 1 -1 1 -4  /
\plot 2 -1 2 -4  /
\plot 3 -1 3 -4  /
\plot 4 -1 4 -4  /
\setquadratic
\plot 1 2 -.5 1.5 -.2 .5  /
\plot .5 .2 .6 .1 1.1 .1 /
\plot 1.6 .1 1.8 .1 2.0 .1 /
\plot 2.6 .1 2.8 .1 3.2 .1 /
\plot 3.5 .1 3.7 .1 4.1 0 /
\plot 4.5 0 4.6 0 5.0 -.1 /
\plot 5.5 -.3 5.6 -.4 6 -1 /
\plot 1      5 -.5 4.5 -.2 3.5  /
\plot .5    3.2 .6 3.1 1.1 3.1 /
\plot 1.6 3.1 1.8 3.1 2.0 3.1 /
\plot 2.6 3.1 2.8 3.1 3.2 3.1 /
\plot 3.5 3.1 3.7 3.1 4.1 3 /
\plot 4.5  3 4.6 3 5.0 2.9  /
\plot 5.5  2.7 5.6 2.6 6 2 /
\setquadratic
\plot 5 -4 5.5 -3 6 -4 /
\plot 5 -1 5.5 -2 6 -1 /
\endpicture}
= 
{\beginpicture
\setcoordinatesystem units <0.35cm,0.2cm>         
\setplotarea x from -.5 to 6, y from -2.5 to 3.5    
\linethickness=0.5pt                          
\put{$\bullet$} at 1 -1 \put{$\bullet$} at 1 2
\put{$\bullet$} at 2 -1 \put{$\bullet$} at 2 2
\put{$\bullet$} at 3 -1 \put{$\bullet$} at 3 2
\put{$\bullet$} at 4 -1 \put{$\bullet$} at 4 2
\put{$\bullet$} at 5 -1 \put{$\bullet$} at 5 2
\put{$\bullet$} at 6 -1 \put{$\bullet$} at 6 2
\plot 0 3.5 0 -2.5 /
\plot .25 3.5 .25 -2.5 /
\ellipticalarc axes ratio 1:1 360 degrees from 0 3.5 center 
at .124 3.6
\put{$*$} at  .124 3.6  
\ellipticalarc axes ratio 1:1 220 degrees from 0 -2.5 center 
at .124  -2.6 
\plot 1 -1 3 2  /
\plot 2 -1 4 2  /
\plot 3 -1 5 2  /
\plot 4 -1 6 2  /
\setquadratic
\plot 5 -1 5.5 0 6 -1 /
\plot 1 2 1.5 1.0 2 2 /
\endpicture}
= 
 {\beginpicture
\setcoordinatesystem units <0.35cm,0.2cm>         
\setplotarea x from -.5 to 6, y from -4.5 to 6.5    
\linethickness=0.5pt                          
\put{$\bullet$} at 1 6 \put{$\bullet$} at 2 6 \put{$\bullet$} at 3 6 \put{$\bullet$} at 4 6 \put{$\bullet$} at 5 6 \put{$\bullet$} at 6 6 
\put{$\bullet$} at 1 4 \put{$\bullet$} at 2 4 \put{$\bullet$} at 3 4 \put{$\bullet$} at 4 4 \put{$\bullet$} at 5 4 \put{$\bullet$} at 6 4 
\put{$\bullet$} at 1 2 \put{$\bullet$} at 2 2 \put{$\bullet$} at 3 2 \put{$\bullet$} at 4 2 \put{$\bullet$} at 5 2 \put{$\bullet$} at 6 2 
\put{$\bullet$} at 1 0 \put{$\bullet$} at 2 0 \put{$\bullet$} at 3 0 \put{$\bullet$} at 4 0 \put{$\bullet$} at 5 0 \put{$\bullet$} at 6 0 
\put{$\bullet$} at 1 -2 \put{$\bullet$} at 2 -2 \put{$\bullet$} at 3 -2 \put{$\bullet$} at 4 -2 \put{$\bullet$} at 5 -2 \put{$\bullet$} at 6 -2 
\put{$\bullet$} at 1 -4 \put{$\bullet$} at 2 -4 \put{$\bullet$} at 3 -4 \put{$\bullet$} at 4 -4 \put{$\bullet$} at 5 -4 \put{$\bullet$} at 6 -4 
\plot 0 6.5 0 -4.5 /
\plot .25 6.5 .25 -4.5 /
\ellipticalarc axes ratio 1:1 360 degrees from 0 6.5 center 
at .124 6.6
\put{$*$} at  .124 6.6  
\ellipticalarc axes ratio 1:1 220 degrees from 0 -4.5 center 
at .124  -4.6 
\plot 1 4 1 -4 /
\plot 2 2 2 -4 /
\plot 3 0 3 -4 /
\plot 4 -2 4 -4 /
\plot 6 -2 6 6 /
\plot 5 0 5 6 /
\plot 4 2 4 6 /
\plot 3 4 3 6 /
\setquadratic
\plot 1  4 1.5 4.5 2 4 /
\plot 1  6 1.5 5.5 2 6  /
\plot 2  2 2.5 2.5 3 2 /
\plot 2  4 2.5 3.5 3 4  /
\plot 3 0 3.5 0.5  4 0 /
\plot 3  2 3.5 1.5 4   2  /
\plot 4 -2 4.5 -1.5  5 -2 /
\plot 4  0 4.5 -0.5 5   0  /
\plot 5 -4 5.5 -3.5  6 -4 /
\plot 5 -2 5.5 -2.5 6 -2  /
\endpicture} = e_1e_2\cdots e_{k-1}
\end{equation}
(see \cite[4.15(iv)]{GL4}).  
In $T_k^a$, we let $X^{\varepsilon_1} = T_1^{-1} T_2^{-1} \cdots T_{k-1}^{-1} \tau^{-1}$ (see Remark \ref{AlternateBk}).

Graham and Lehrer \cite[\S 4.3]{GL4} define four slightly different 
affine Temperley-Lieb algebras, the diagram algebra $T_k^a$ and the algebras defined
as follows:
$$
\begin{array}{ll}
\hbox{Type $GL_k$:} 
&\hbox{$\widehat{\TL}_k^a$ is  $\tilde H_k$ with the relation \eqref{TLKernel},} \\
\hbox{Type $SL_k$:} 
&\hbox{$\TL_k^a$ is $\tilde H_Q$with the relation \eqref{TLKernel},} \\
\hbox{Type $PGL_k$:} 
&\hbox{$\widetilde{\TL}_k^a$ is $\tilde H_P$ with the relation \eqref{TLKernel}.}
\end{array}
$$
For each invertible element $\alpha$ in the base ring there is a surjective homomorphism
\begin{equation}\label{AffineToAffineTL}
\begin{matrix}
\tilde H_k &\longrightarrow
&\widehat{\TL}_k^a &\longrightarrow &T_k^a \\
\tau &\longmapsto &\tau  &\longmapsto &\alpha\tau\\
T_i &\longmapsto &q-e_i &\longmapsto &q-e_i \\
\end{matrix}
\end{equation}
and every irreducible representation of $\widehat{\TL}_k^a$ factors through one of these
homomorphisms (see \cite[Prop. 4.14(v)]{GL4}).  In Proposition \ref{SchurWeylAffineTL} we shall see that these homomorphisms arise naturally in the Schur-Weyl duality setting.

\subsection{A commuting family in the affine Temperley-Lieb algebra}

View the elements $X^{-\varepsilon_i}$ in the affine Temperley-Lieb algebra 
$\widehat{\TL}_k^a$ via the surjective
algebra homomorphism of \eqref{AffineToAffineTL}.   
Define 
\begin{equation}
(q-q^{-1}) m_1  =  q^{-1} X^{-\varepsilon_1}
\quad\hbox{and}\quad
(q-q^{-1})m_i =  q^{i-2}(X^{-\varepsilon_i} - q^{-2}X^{-\varepsilon_{i-1}}), 
\label{midef}
\end{equation}
for $i=2,3,\ldots, k$.  Since 
$X^{-\varepsilon_i}X^{-\varepsilon_j}=X^{-\varepsilon_j}X^{-\varepsilon_i}$
for all $1\le i,j\le k$, and
the $m_i$ are linear combinations of the $X^{-\varepsilon_i}$, 
$$m_im_j = m_jm_i\ \ \hbox{in $\widehat{\TL}_k^a$,}
\qquad\hbox{for all $1\le i,j\le k$.}
$$ 

\begin{prop}\label{eqvforms}  For $1 \le i \le k$,
\begin{equation*}
\begin{array}{ll}
(a) &X^{-\varepsilon_i} = q^{-(i-2)}(q-q^{-1})(m_i+q^{-1}m_{i-1}+q^{-2}m_{i-2}+\cdots
+q^{-(i-1)}m_1),\hskip1truein \\ \\
(b) &X^{-\varepsilon_1}+\cdots + X^{-\varepsilon_i}
= q^{-(i-2)}(q-q^{-1})(m_i+[2]m_{i-1}+\cdots + [i]m_1).
\end{array}
\end{equation*}
\end{prop}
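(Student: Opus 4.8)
The plan is to treat both identities as a purely formal inversion of the bidiagonal linear system \eqref{midef} relating the commuting elements $X^{-\varepsilon_i}$ and $m_i$. Since every occurrence of $m_p$ enters multiplied by $(q-q^{-1})$, nothing has to be inverted in the base ring, so the identities are valid over any ring in which the formulas make sense.

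First I would rewrite \eqref{midef} as a recursion: for $i\ge 2$ the definition gives $X^{-\varepsilon_i}=q^{-(i-2)}(q-q^{-1})m_i+q^{-2}X^{-\varepsilon_{i-1}}$, while the $i=1$ case reads $X^{-\varepsilon_1}=q(q-q^{-1})m_1$. Part (a) then follows by induction on $i$: the base case $i=1$ is exactly this last equation (the claimed sum having only the $m_1$ term, with coefficient $1$), and in the inductive step I substitute the formula for $X^{-\varepsilon_{i-1}}$ into the recursion. Multiplying the inductive expression by $q^{-2}$ sends its prefactor $q^{-(i-3)}(q-q^{-1})$ to $q^{-(i-2)}(q-q^{-1})\cdot q^{-1}$, which distributes across the bracket to turn $m_{i-1}+q^{-1}m_{i-2}+\cdots$ into $q^{-1}m_{i-1}+q^{-2}m_{i-2}+\cdots$; adding the leading term $q^{-(i-2)}(q-q^{-1})m_i$ reproduces the claimed expansion. (Equivalently, one can substitute \eqref{midef} directly into the right-hand side of (a) and observe that the resulting sum telescopes, the $m_1$ term absorbing the boundary contribution.)

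For part (b) I would again induct on $i$, writing $S_i=X^{-\varepsilon_1}+\cdots+X^{-\varepsilon_i}$ and $R_i$ for the claimed right-hand side. The base case $i=1$ is $S_1=X^{-\varepsilon_1}=q(q-q^{-1})m_1=R_1$. Since $S_i=S_{i-1}+X^{-\varepsilon_i}$ and $S_{i-1}=R_{i-1}$ by induction, it suffices to check $R_i-X^{-\varepsilon_i}=R_{i-1}$. Using part (a) for $X^{-\varepsilon_i}$, the coefficient of $m_{i-l}$ in $R_i-X^{-\varepsilon_i}$, after removing the common factor $q^{-(i-2)}(q-q^{-1})$, is $[l+1]-q^{-l}$. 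The elementary $q$-integer identity $[l+1]=q[l]+q^{-l}$ shows this coefficient equals $q[l]$; in particular the $l=0$ term vanishes because $[0]=0$, so pulling out the factor $q$ turns the prefactor $q^{-(i-2)}(q-q^{-1})$ into $q^{-(i-3)}(q-q^{-1})$ and leaves exactly $m_{i-1}+[2]m_{i-2}+\cdots+[i-1]m_1$, which is $R_{i-1}$.

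The genuinely routine content is all in the exponent bookkeeping; the only step requiring a moment's thought is recognizing the $q$-integer identity $[l+1]-q^{-l}=q[l]$ that makes the sum in (b) telescope, so I expect that to be the main (and only mild) obstacle.
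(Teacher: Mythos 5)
Your proposal is correct. Part (a) is exactly the paper's argument: the same rewriting of \eqref{midef} as the recursion $X^{-\varepsilon_i}=q^{-(i-2)}(q-q^{-1})m_i+q^{-2}X^{-\varepsilon_{i-1}}$ followed by the same induction. For part (b) you take a genuinely different (though equivalent) route: the paper sums the formula in (a) over $j$, interchanges the resulting double sum, and evaluates the inner sum in closed form via $\sum_{j=r}^{i} q^{i+r-2j}=[i-r+1]$, whereas you induct on $i$, reducing the step to the identity $R_i - X^{-\varepsilon_i}=R_{i-1}$, which you verify coefficientwise using $[l+1]-q^{-l}=q[l]$ together with $[0]=0$. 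The two are really the same computation organized differently: your one-step recursion $[l+1]=q[l]+q^{-l}$ is precisely what telescopes into the paper's closed-form geometric sum. The paper's version has the small advantage of exhibiting each coefficient $[i-r+1]$ directly as a geometric sum in one pass; yours has the advantage that the inductive step is a purely local check on coefficients, with the shift of prefactor from $q^{-(i-2)}$ to $q^{-(i-3)}$ falling out of the factored $q$, so the exponent bookkeeping is arguably lighter. Both are complete and correct proofs.
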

\begin{proof}
Rewrite \eqref{midef} as 
$$
X^{-\varepsilon_i} = q^{-(i-2)}(q-q^{-1})m_i+q^{-2}X^{-\varepsilon_{i-1}}
$$
and use induction,
\begin{align*}
X^{-\varepsilon_i} 
&= q^{-(i-2)}(q-q^{-1})m_i +q^{-2}\big(q^{-(i-1-2)}(q-q^{-1})
(m_{i-1}+q^{-1}m_{i-2}+\cdots+q^{-(i-2)}m_1)\big),
\end{align*}
to obtain the formula for $X^{-\varepsilon_i}$ in (a). 
Summing the formula in (a) over $i$ gives
\begin{align*}
\sum_{j=1}^i X^{-\varepsilon_j}
&= \sum_{j=1}^i \left(q^{-(j-2)}(q-q^{-1}) \sum_{\ell=0}^{j-2} q^{-\ell}m_{j-\ell}\right) 
= q^{-(i-2)}(q-q^{-1}) \sum_{j=1}^i \sum_{\ell=0}^{j-1}
q^{i-j-\ell}m_{j-\ell} 
\end{align*}
and, thus, formula (c) follows from
\begin{align*}
\sum_{j=1}^i \sum_{\ell=0}^{j-1}
q^{i-j-\ell}m_{j-\ell} 
&= \sum_{j=1}^i \sum_{r=1}^j q^{i-j-(j-r)}m_r
=\sum_{r=1}^i \sum_{j=r}^i q^{i+r-2j}m_r = \sum_{r=1}^i  [i-r+1]m_r.
\end{align*}
\end{proof}

The following Lemma is a transfer of the recursion $X^{\varepsilon_i}
=T_{i-1}X^{\varepsilon_{i-1}}T_{i-1}$ to the $m_i$.
The following are the base cases of Lemma \ref{recursion}.  
\begin{align*}
m_1 &= \frac{q^{-1}}{q-q^{-1}}  X^{-\varepsilon_1}  \qquad\hbox{and}\qquad
m_2  = \frac{x}{q-q^{-1}} e_1 - \left( e_1 m_1 + m_1 e_1\right)
\end{align*}

\begin{lemma}\label{recursion}
Let $x$ be the constant defined by the equation
$e_1X^{-\varepsilon_1}e_1 = xe_1$.  For $2 \le i \le k$, 
$$
m_i = \frac{q^{i-2}x}{q-q^{-1}} e_{i-1} - (e_{i-1}m_{i-1}+m_{i-1}e_{i-1})
-\sum_{\ell=1}^{i-2} ([i-\ell]-[i-\ell-2])m_\ell e_{i-1}.
$$
\end{lemma}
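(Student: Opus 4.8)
The plan is to prove the recursion by transferring the braid recursion $X^{\varepsilon_i}=T_{i-1}X^{\varepsilon_{i-1}}T_{i-1}$ to the inverse elements and then isolating $m_i$ through its definition \eqref{midef}. First I would invert the recursion to $X^{-\varepsilon_i}=T_{i-1}^{-1}X^{-\varepsilon_{i-1}}T_{i-1}^{-1}$, and use the quadratic Hecke relation together with $e_{i-1}=q-T_{i-1}$ to write $T_{i-1}^{-1}=T_{i-1}-(q-q^{-1})=q^{-1}-e_{i-1}$. Expanding then gives
$$X^{-\varepsilon_i}=q^{-2}X^{-\varepsilon_{i-1}}-q^{-1}\big(e_{i-1}X^{-\varepsilon_{i-1}}+X^{-\varepsilon_{i-1}}e_{i-1}\big)+e_{i-1}X^{-\varepsilon_{i-1}}e_{i-1},$$
and \eqref{midef} identifies $X^{-\varepsilon_i}-q^{-2}X^{-\varepsilon_{i-1}}$ with $q^{-(i-2)}(q-q^{-1})m_i$. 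Multiplying through by $q^{i-2}/(q-q^{-1})$ therefore reduces the whole statement to rewriting the three-term expression on the right in terms of $e_{i-1}$, the symmetric term $e_{i-1}m_{i-1}+m_{i-1}e_{i-1}$, and the one-sided terms $m_\ell e_{i-1}$.

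Second, I would record the commutation facts that drive the rewriting. Since $X^{\varepsilon_j}$ is built from $T_1,\dots,T_{j-1}$ and $X^{\varepsilon_1}$, and since $e_{i-1}$ commutes with each of $T_1,\dots,T_{i-3}$ and with $X^{\varepsilon_1}$, the element $e_{i-1}$ commutes with $X^{-\varepsilon_j}$, and hence with $m_\ell$, whenever $\ell\le i-2$. Expanding $X^{-\varepsilon_{i-1}}$ by Proposition \ref{eqvforms}(a) as a linear combination of $m_1,\dots,m_{i-1}$, the two-sided term $-q^{-1}(e_{i-1}X^{-\varepsilon_{i-1}}+X^{-\varepsilon_{i-1}}e_{i-1})$ splits: after the prefactors telescope, the $\ell=i-1$ part produces exactly $-(e_{i-1}m_{i-1}+m_{i-1}e_{i-1})$ with coefficient $1$, while for each $\ell\le i-2$ commutativity collapses $e_{i-1}m_\ell+m_\ell e_{i-1}$ into $2m_\ell e_{i-1}$, contributing $-2q^{-(i-1-\ell)}m_\ell e_{i-1}$.

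The hard part will be the sandwiched term $e_{i-1}X^{-\varepsilon_{i-1}}e_{i-1}$; establishing the identity
$$e_{i-1}X^{-\varepsilon_{i-1}}e_{i-1}=x\,e_{i-1}-(q-q^{-1})\sum_{\ell=1}^{i-2}\big(q^{1-\ell}-q^{-(2i-\ell-3)}\big)m_\ell e_{i-1}$$
is the technical heart, since this is where the constant $x$ enters and where the coefficients $[i-\ell]-[i-\ell-2]$ are born. I would prove it by a secondary induction on $i$: writing $X^{-\varepsilon_{i-1}}=T_{i-2}^{-1}X^{-\varepsilon_{i-2}}T_{i-2}^{-1}$, substituting $T_{i-2}^{-1}=q^{-1}-e_{i-2}$, and using the planar relations $e_{i-1}e_{i-2}e_{i-1}=e_{i-1}$ and $e_{i-1}^2=ne_{i-1}$ together with the fact that $X^{-\varepsilon_{i-2}}$ commutes with $e_{i-1}$, one obtains the reduction
$$e_{i-1}X^{-\varepsilon_{i-1}}e_{i-1}=(q^{-3}-q^{-1})\,e_{i-1}X^{-\varepsilon_{i-2}}+e_{i-1}\big(e_{i-2}X^{-\varepsilon_{i-2}}e_{i-2}\big)e_{i-1},$$
into which the inductive hypothesis one level down and Proposition \ref{eqvforms}(a) are fed; the residual powers of $q$ collapse because $e_{i-1}m_\ell e_{i-2}e_{i-1}=m_\ell e_{i-1}$ for $\ell\le i-3$, where $m_\ell$ commutes with both $e_{i-1}$ and $e_{i-2}$.

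Finally, I would assemble the three contributions and verify the exponent arithmetic. The only genuine simplification required is the identity $[i-\ell]-[i-\ell-2]=q^{i-\ell-1}+q^{-(i-\ell-1)}$, after which the coefficient $-2q^{-(i-1-\ell)}$ from the two-sided term combines with the coefficient $-(q^{i-1-\ell}-q^{-(i-1-\ell)})$ from the sandwich term to give precisely $-([i-\ell]-[i-\ell-2])$, while the $x$-term contributes $\tfrac{q^{i-2}x}{q-q^{-1}}e_{i-1}$. The base case $i=2$ of both inductions is immediate: the sandwich identity is then the defining equation $e_1X^{-\varepsilon_1}e_1=xe_1$, and the recursion reduces to the displayed formula for $m_2$.
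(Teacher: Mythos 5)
Your argument is correct, and its first half is exactly the paper's: both invert the braid recursion to $X^{-\varepsilon_i}=(q^{-1}-e_{i-1})X^{-\varepsilon_{i-1}}(q^{-1}-e_{i-1})$, identify $X^{-\varepsilon_i}-q^{-2}X^{-\varepsilon_{i-1}}$ with $q^{-(i-2)}(q-q^{-1})m_i$, expand $X^{-\varepsilon_{i-1}}$ via Proposition \ref{eqvforms}(a), and use commutation of $m_\ell$ with $e_{i-1}$ for $\ell\le i-2$ to collapse the two-sided terms. You part ways at the sandwich term: the paper never writes down your closed-form identity for $e_{i-1}X^{-\varepsilon_{i-1}}e_{i-1}$; it keeps the term $qe_{i-1}m_{i-1}e_{i-1}$ intact (its equation \eqref{mitoes}) and inducts on the Lemma itself, substituting the formula for $m_{i-1}$ and collapsing with $e_{i-1}e_{i-2}e_{i-1}=e_{i-1}$. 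You instead run a secondary induction on the auxiliary identity
$$e_{i-1}X^{-\varepsilon_{i-1}}e_{i-1}=x\,e_{i-1}-(q-q^{-1})\sum_{\ell=1}^{i-2}\bigl(q^{1-\ell}-q^{-(2i-\ell-3)}\bigr)m_\ell e_{i-1},$$
proved by descending the braid recursion one more level, and then derive the Lemma from it by direct assembly, never invoking the Lemma's own inductive hypothesis. I verified your claims: the displayed identity has the correct coefficients; the reduction $e_{i-1}X^{-\varepsilon_{i-1}}e_{i-1}=(q^{-3}-q^{-1})X^{-\varepsilon_{i-2}}e_{i-1}+e_{i-1}(e_{i-2}X^{-\varepsilon_{i-2}}e_{i-2})e_{i-1}$ holds (the two cross terms each contribute $-q^{-1}X^{-\varepsilon_{i-2}}e_{i-1}$ while the $q^{-2}$ term contributes $(q^{-1}+q^{-3})X^{-\varepsilon_{i-2}}e_{i-1}$); and the final cancellation $-2q^{-(i-\ell-1)}-(q^{i-\ell-1}-q^{-(i-\ell-1)})=-([i-\ell]-[i-\ell-2])$ is right. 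The trade-off: the paper's induction is more economical, since the inductive hypothesis already packages the sandwich computation; your route costs an extra layer of exponent bookkeeping but isolates where the constant $x$ enters into a self-contained identity about the $X^{-\varepsilon_j}$, from which the recursion follows in a single non-inductive step.
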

\begin{proof}
From \eqref{BraidMurphy} and  \eqref{TiFromEi} we have $X^{-\varepsilon_i} = ( q^{-1}-e_{i-1})X^{-\varepsilon_{i-1}}(q^{-1}-e_{i-1}).$ Substituting this into the definition of $m_i$ gives 
\begin{align*}
(q-q^{-1})m_i 
&= q^{i-2}(X^{-\varepsilon_i}-q^{-2}X^{-\varepsilon_{i-1}}) 
= q^{i-2}(q^{-1}-e_{i-1})X^{-\varepsilon_{i-1}}(q^{-1}-e_{i-1}) - q^{i-4}X^{-\varepsilon_{i-1}} \\
&= q^{i-2}e_{i-1}X^{- \varepsilon_{i-1}}e_{i-1}
-q^{i-3}(e_{i-1}X^{-\varepsilon_{i-1}}+X^{-\varepsilon_{i-1}}e_{i-1}). 
\end{align*}
Use Proposition \ref{eqvforms} (a) to substitute for $X^{-\varepsilon_{i-1}}$,
\begin{align*}
(q-q^{-1})m_i 
&= (q-q^{-1})q^{-(m+i-3)}
(q^{i-2}e_{i-1}m_{i-1}e_{i-1}-q^{i-3}(e_{i-1}m_{i-1}+m_{i-1}e_{i-1})) \\
&\qquad
+(q-q^{-1})q^{-(i-3)}(q^{-1}m_{i-2}+\cdots+q^{-(i-2)}m_1)(q^{i-2}e_{i-1}^2-2q^{i-3}e_{i-1}) \\
&=(q-q^{-1})\big(
qe_{i-1}m_{i-1}e_{i-1}
-(e_{i-1}m_{i-1}+m_{i-1}e_{i-1})\big) \\
&\qquad
+(q-q^{-1})(m_{i-2}+\cdots+q^{-(i-3)}m_1)(q+q^{-1}-2q^{-1})e_{i-1}  \\
&= (q-q^{-1})\left(
\begin{array}{l}qe_{i-1}m_{i-1}e_{i-1}
-(e_{i-1}m_{i-1}+m_{i-1}e_{i-1}) \\
\qquad
+(q-q^{-1})(m_{i-2}+\cdots+q^{-(i-3)}m_1)e_{i-1}
\end{array}\right),
\end{align*}
which gives
\begin{equation}\label{mitoes}
\begin{array}{rl}
m_i &=~ qe_{i-1}m_{i-1}e_{i-1}
 -(e_{i-1}m_{i-1}+m_{i-1}e_{i-1})  \\
&\qquad +(q-q^{-1})(m_{i-2}+q^{-1}m_{i-3}+q^{-2}m_{i-4}+\cdots+q^{-(i-3)}m_1)e_{i-1}.
\end{array}
\end{equation}
Using induction, substitute for the first $m_{i-1}$ in this equation to get
\begin{align*}
m_i &=- (e_{i-1}m_{i-1}+m_{i-1}e_{i-1})  + (q-q^{-1})\sum_{\ell=1}^{i-2} q^{-(i-2-\ell)}m_\ell e_{i-1} \\
&\qquad + q\left(
\frac{ q^{i-3}x}{q-q^{-1}} e_{i-1} 
- 2m_{i-2}e_{i-1}  - \sum_{\ell=1}^{i-3} ([i-\ell-1]-[i-\ell-3])m_{\ell} e_{i-1} \right) \\
&= \frac{q^{i-2}x}{q-q^{-1} }  e_{i-1}  - (e_{i-1}m_{i-1}+m_{i-1}e_{i-1})   -\sum_{\ell=2}^{i-2} ([i-\ell]-[i-\ell-2])m_\ell e_{i-1}.
\end{align*}
\end{proof}



\subsection{Diagram Representation of Murphy Elements}

Label the vertices from left to right in the top row of a diagram $d \in T_k$ with $1, 2, \ldots, k$,  
and label the corresponding vertices in the bottom row with $1', 2', \ldots, k'$.
The \emph{cycle type} of a diagram $d\in T_k$ is the set partition $\tau(d)$ of
$\{1,2,\ldots, k\}$ obtained from $d$ by setting $1=1',2=2',\ldots, k=k'$.  If $\tau(d)$ is a set partition 
of the form $\{\{1,2,\ldots,\gamma_1\},\{\gamma_1+1,\mu_1+2, \ldots,\gamma_1+\gamma+2\},
\ldots, \{\gamma_1+\cdots+\gamma_{\ell-1}+1,\ldots,k\}\}$, where $(\gamma_1,\ldots, \gamma_\ell)$ is
a composition of $k$, then we simplify notation by writing $\tau(d) = (\gamma_1,\ldots,\gamma_\ell)$.
For example
$$d={\beginpicture
\setcoordinatesystem units <0.5cm,0.25cm> 
\setplotarea x from 1 to 12, y from -1 to 2   
\linethickness=0.5pt                        
\put{$\bullet$} at 1 -1 \put{$\bullet$} at 1 2
\put{$\bullet$} at 2 -1 \put{$\bullet$} at 2 2
\put{$\bullet$} at 3 -1 \put{$\bullet$} at 3 2
\put{$\bullet$} at 4 -1 \put{$\bullet$} at 4 2
\put{$\bullet$} at 5 -1 \put{$\bullet$} at 5 2
\put{$\bullet$} at 6 -1 \put{$\bullet$} at 6 2
\put{$\bullet$} at 7 -1 \put{$\bullet$} at 7 2
\put{$\bullet$} at 8 -1 \put{$\bullet$} at 8 2
\put{$\bullet$} at 9 -1 \put{$\bullet$} at 9 2
\put{$\bullet$} at 10 -1 \put{$\bullet$} at 10 2
\put{$\bullet$} at 11 -1 \put{$\bullet$} at 11 2
\put{$\bullet$} at 12 -1 \put{$\bullet$} at 12 2
\plot 1 -1 5 2 /
\plot 8 -1 6 2 /
\setquadratic
\plot 1 2   1.5 1.5   2 2 /
\plot 3 2   3.5 1.5   4 2 /
\plot 2 -1   2.5 -.5   3 -1 /
\plot 4 -1   4.5 -.5   5 -1 /
\plot 6 -1   6.5 -.5   7 -1 /
\plot 9 -1   9.5 -.5   10 -1 /
\plot 11 -1   11.5 -.5   12 -1 /
\plot 7 2   7.5 1.5   8 2 /
\plot 9 2   10.5 1   12 2 /
\plot 10 2   10.5 1.5   11 2 /
\endpicture}
\qquad\hbox{has}\qquad
\tau(d) = (5,3,4).$$
There are diagrams whose cycle type cannot be written as a composition (for example
$
$$d={\beginpicture
\setcoordinatesystem units <0.3cm,0.18cm> 
\setplotarea x from 1 to 4, y from -1 to 2   
\linethickness=0.5pt                        
\put{$\bullet$} at 1 -1 \put{$\bullet$} at 1 2
\put{$\bullet$} at 2 -1 \put{$\bullet$} at 2 2
\put{$\bullet$} at 3 -1 \put{$\bullet$} at 3 2
\put{$\bullet$} at 4 -1 \put{$\bullet$} at 4 2
\setquadratic
\plot 1 2   2.5 1.0   4 2 /
\plot 2 2   2.5 1.5   3 2 /
\plot 1 -1   2.5 0 4 -1 /
\plot 2 -1  2.5 -.5   3 -1 /
\endpicture}$ has cycle type $\{ \{1,4\}, \{2,3\}\}$) but all of the diagrams needed here have cycle types that are compositions. 

 If $\gamma=(\gamma_1,\ldots,\gamma_\ell)$ is a composition of $k$ define
\begin{equation}\label{TLMurphyDiagram}
d_\gamma = \sum_{\tau(d)=\gamma} d
\end{equation}  
as the sum of the Temperley-Lieb diagrams on $k$ dots with cycle type $\gamma$.
Define $d_\gamma^\ast$ be the sum of diagrams obtained from the summands of $d_\gamma$ 
by wrapping the first edge in each row  around the pole, with the orientation coming from
$X^{-\varepsilon_1}$ as shown in the  examples below.  When the first edge in the top row connects to the first vertex in the bottom row  only one new diagram is produced, otherwise there are two.  For example, in $\widehat{\TL}_4^a$,
$$
\begin{array}{lll}
d_{31} =  {\beginpicture
\setcoordinatesystem units <0.2cm,0.15cm>         
\setplotarea x from -2 to 5, y from -3 to 5    
\linethickness=0.5pt                          
\put{$\bullet$} at 1 -1 \put{$\bullet$} at 1 2
\put{$\bullet$} at 2 -1 \put{$\bullet$} at 2 2
\put{$\bullet$} at 3 -1 \put{$\bullet$} at 3 2
\put{$\bullet$} at 4 -1 \put{$\bullet$} at 4 2
\plot 0 -2.5  0 3.5 /
\plot -.5 -2.5  -.5 3.5 /
\setquadratic
\plot 0 3.5 -.25 3.7 -.5 3.5 /
\plot 0 3.5 -.25 3.3 -.5 3.5 /
\plot 0 -2.5 -.25 -2.7 -.5 -2.5 /
\setlinear
\plot 1 2 3 -1 /
\plot 4 -1 4 2 /
\setquadratic
\plot 2 2 2.5 1.5 3 2 /
\plot 1 -1 1.5 -.5 2 -1 /
\endpicture}
+
 {\beginpicture
\setcoordinatesystem units <0.2cm,0.15cm>         
\setplotarea x from -2 to 5, y from -3 to 5    
\linethickness=0.5pt                          
\put{$\bullet$} at 1 -1 \put{$\bullet$} at 1 2
\put{$\bullet$} at 2 -1 \put{$\bullet$} at 2 2
\put{$\bullet$} at 3 -1 \put{$\bullet$} at 3 2
\put{$\bullet$} at 4 -1 \put{$\bullet$} at 4 2
\plot 0 -2.5  0 3.5 /
\plot -.5 -2.5  -.5 3.5 /
\setquadratic
\plot 0 3.5 -.25 3.7 -.5 3.5 /
\plot 0 3.5 -.25 3.3 -.5 3.5 /
\plot 0 -2.5 -.25 -2.7 -.5 -2.5 /
\setlinear
\plot 1 -1 3 2 /
\plot 4 -1 4 2 /
\setquadratic
\plot 1 2 1.5 1.5 2 2 /
\plot 2 -1 2.5 -.5 3 -1 /
\endpicture}
&\quad&
d_{31}^\ast = {\beginpicture
\setcoordinatesystem units <0.2cm,0.15cm>         
\setplotarea x from -2 to 5, y from -3 to 5    
\linethickness=0.5pt                          
\put{$\bullet$} at 1 -1 \put{$\bullet$} at 1 2
\put{$\bullet$} at 2 -1 \put{$\bullet$} at 2 2
\put{$\bullet$} at 3 -1 \put{$\bullet$} at 3 2
\put{$\bullet$} at 4 -1 \put{$\bullet$} at 4 2
\plot 0 -2.5  0 3.5 /
\plot -.5 -2.5  -.5 3.5 /
\setquadratic
\plot 0 3.5 -.25 3.7 -.5 3.5 /
\plot 0 3.5 -.25 3.3 -.5 3.5 /
\plot 0 -2.5 -.25 -2.7 -.5 -2.5 /
\setlinear
\plot 3 -1 1 2 /
\plot 4 -1 4 2 /
\setquadratic
\plot 2 2 2.5 1.5 3 2 /
\plot 2 -1 1.5  0  .2 1  -.2 1.1  -.7 1 -.9 .5  -.7 0  /
\plot .2 0 .75 -.3 1 -1 /
\endpicture}
+ {\beginpicture
\setcoordinatesystem units <0.2cm,0.15cm>         
\setplotarea x from -2 to 5, y from -3 to 5    
\linethickness=0.5pt                          
\put{$\bullet$} at 1 -1 \put{$\bullet$} at 1 2
\put{$\bullet$} at 2 -1 \put{$\bullet$} at 2 2
\put{$\bullet$} at 3 -1 \put{$\bullet$} at 3 2
\put{$\bullet$} at 4 -1 \put{$\bullet$} at 4 2
\plot 0 -2.5  0 3.5 /
\plot -.5 -2.5  -.5 3.5 /
\setquadratic
\plot 0 3.5 -.25 3.7 -.5 3.5 /
\plot 0 3.5 -.25 3.3 -.5 3.5 /
\plot 0 -2.5 -.25 -2.7 -.5 -2.5 /
\setlinear
\plot 4 -1 4 2 /
\setquadratic
\plot 2 2 2.5 1.5 3 2 /
\plot 1 -1 1.5 -.5 2 -1 /
\plot 1 2   .75 1.5    -.7   1 -.9 .5   -.7  0 /
\plot .2 0 2 0 3 -1 /
\endpicture} 
 + {\beginpicture
\setcoordinatesystem units <0.2cm,0.15cm>         
\setplotarea x from -2 to 5, y from -3 to 5    
\linethickness=0.5pt                          
\put{$\bullet$} at 1 -1 \put{$\bullet$} at 1 2
\put{$\bullet$} at 2 -1 \put{$\bullet$} at 2 2
\put{$\bullet$} at 3 -1 \put{$\bullet$} at 3 2
\put{$\bullet$} at 4 -1 \put{$\bullet$} at 4 2
\plot 0 -2.5  0 3.5 /
\plot -.5 -2.5  -.5 3.5 /
\setquadratic
\plot 0 3.5 -.25 3.7 -.5 3.5 /
\plot 0 3.5 -.25 3.3 -.5 3.5 /
\plot 0 -2.5 -.25 -2.7 -.5 -2.5 /
\setlinear
\plot 4 -1 4 2 /
\setquadratic
\plot 1 2 1.5 1.5 2 2 /
\plot 3 2  2  1  -.7 1   -.9 .5     -.7  0  /
\plot .2 0 .75 -.3 1 -1 /
\plot 2 -1 2.5 -.5 3 -1 /
\endpicture}
+ {\beginpicture
\setcoordinatesystem units <0.2cm,0.15cm>         
\setplotarea x from -2 to 5, y from -3 to 5    
\linethickness=0.5pt                          
\put{$\bullet$} at 1 -1 \put{$\bullet$} at 1 2
\put{$\bullet$} at 2 -1 \put{$\bullet$} at 2 2
\put{$\bullet$} at 3 -1 \put{$\bullet$} at 3 2
\put{$\bullet$} at 4 -1 \put{$\bullet$} at 4 2
\plot 0 -2.5  0 3.5 /
\plot -.5 -2.5  -.5 3.5 /
\setquadratic
\plot 0 3.5 -.25 3.7 -.5 3.5 /
\plot 0 3.5 -.25 3.3 -.5 3.5 /
\plot 0 -2.5 -.25 -2.7 -.5 -2.5 /
\setlinear
\plot 1 -1 3 2 /
\plot 4 -1 4 2 /
\setquadratic
\plot 1 2 .75 1.4   -.7 1   -.9 .5     -.7  0  /
\plot .2 0 1 .3 2 2 /
\plot 2 -1 2.5 -.5 3 -1 /
\endpicture} \\
d_{13} = {\beginpicture
\setcoordinatesystem units <0.2cm,0.15cm>         
\setplotarea x from -2 to 5, y from -3 to 5    
\linethickness=0.5pt                          
\put{$\bullet$} at 1 -1 \put{$\bullet$} at 1 2
\put{$\bullet$} at 2 -1 \put{$\bullet$} at 2 2
\put{$\bullet$} at 3 -1 \put{$\bullet$} at 3 2
\put{$\bullet$} at 4 -1 \put{$\bullet$} at 4 2
\plot 0 -2.5  0 3.5 /
\plot -.5 -2.5  -.5 3.5 /
\setquadratic
\plot 0 3.5 -.25 3.7 -.5 3.5 /
\plot 0 3.5 -.25 3.3 -.5 3.5 /
\plot 0 -2.5 -.25 -2.7 -.5 -2.5 /
\setlinear
\plot 1 -1 1 2 /
\plot 4 -1 2 2 /
\setquadratic
\plot 3 -1 2.5 -.5 2 -1 /
\plot 3 2 3.5 1.5 4 2 /
\endpicture} 
+
{\beginpicture
\setcoordinatesystem units <0.2cm,0.15cm>         
\setplotarea x from -2 to 5, y from -3 to 5    
\linethickness=0.5pt                          
\put{$\bullet$} at 1 -1 \put{$\bullet$} at 1 2
\put{$\bullet$} at 2 -1 \put{$\bullet$} at 2 2
\put{$\bullet$} at 3 -1 \put{$\bullet$} at 3 2
\put{$\bullet$} at 4 -1 \put{$\bullet$} at 4 2
\plot 0 -2.5  0 3.5 /
\plot -.5 -2.5  -.5 3.5 /
\setquadratic
\plot 0 3.5 -.25 3.7 -.5 3.5 /
\plot 0 3.5 -.25 3.3 -.5 3.5 /
\plot 0 -2.5 -.25 -2.7 -.5 -2.5 /
\setlinear
\plot 1 -1 1 2 /
\plot 2 -1 4 2 /
\setquadratic
\plot 3 2 2.5 1.5 2 2 /
\plot 3 -1 3.5 -.5 4 -1 /
\endpicture}
& &
d_{13}^\ast = {\beginpicture
\setcoordinatesystem units <0.2cm,0.15cm>         
\setplotarea x from -2 to 5, y from -3 to 5    
\linethickness=0.5pt                          
\put{$\bullet$} at 1 -1 \put{$\bullet$} at 1 2
\put{$\bullet$} at 2 -1 \put{$\bullet$} at 2 2
\put{$\bullet$} at 3 -1 \put{$\bullet$} at 3 2
\put{$\bullet$} at 4 -1 \put{$\bullet$} at 4 2
\plot 0 -2.5  0 3.5 /
\plot -.5 -2.5  -.5 3.5 /
\setquadratic
\plot 0 3.5 -.25 3.7 -.5 3.5 /
\plot 0 3.5 -.25 3.3 -.5 3.5 /
\plot 0 -2.5 -.25 -2.7 -.5 -2.5 /
\setlinear
\plot 4 -1 2 2 /
\setquadratic
\plot 1 2 .75 1.5  -.7 1   -.9 .5     -.7  0  /
\plot .2 0 .75 -.3 1 -1 /
\setquadratic
\plot 3 -1 2.5 -.5 2 -1 /
\plot 3 2 3.5 1.5 4 2 /
\endpicture} 
+
{\beginpicture
\setcoordinatesystem units <0.2cm,0.15cm>         
\setplotarea x from -2 to 5, y from -3 to 5    
\linethickness=0.5pt                          
\put{$\bullet$} at 1 -1 \put{$\bullet$} at 1 2
\put{$\bullet$} at 2 -1 \put{$\bullet$} at 2 2
\put{$\bullet$} at 3 -1 \put{$\bullet$} at 3 2
\put{$\bullet$} at 4 -1 \put{$\bullet$} at 4 2
\plot 0 -2.5  0 3.5 /
\plot -.5 -2.5  -.5 3.5 /
\setquadratic
\plot 0 3.5 -.25 3.7 -.5 3.5 /
\plot 0 3.5 -.25 3.3 -.5 3.5 /
\plot 0 -2.5 -.25 -2.7 -.5 -2.5 /
\setlinear
\plot 2 -1 4 2 /
\setquadratic
\plot 1 2 .75 1.5  -.7 1   -.9 .5     -.7  0  /
\plot .2 0 .75 -.3 1 -1 /
\setquadratic
\plot 3 2 2.5 1.5 2 2 /
\plot 3 -1 3.5 -.5 4 -1 /
\endpicture} 
\\
d_{22}\,\, = {\beginpicture
\setcoordinatesystem units <0.2cm,0.15cm>         
\setplotarea x from -2 to 5, y from -3 to 5    
\linethickness=0.5pt                          
\put{$\bullet$} at 1 -1 \put{$\bullet$} at 1 2
\put{$\bullet$} at 2 -1 \put{$\bullet$} at 2 2
\put{$\bullet$} at 3 -1 \put{$\bullet$} at 3 2
\put{$\bullet$} at 4 -1 \put{$\bullet$} at 4 2
\plot 0 -2.5  0 3.5 /
\plot -.5 -2.5  -.5 3.5 /
\setquadratic
\plot 0 3.5 -.25 3.7 -.5 3.5 /
\plot 0 3.5 -.25 3.3 -.5 3.5 /
\plot 0 -2.5 -.25 -2.7 -.5 -2.5 /
\setlinear
\setquadratic
\plot 1 2 1.5 1.5 2 2 /
\plot 1 -1 1.5 -.5 2 -1 /
\plot 3 2  3.5 1.5 4 2 /
\plot 3 -1 3.5 -.5 4 -1 /
\endpicture}
&\qquad&
d_{22}^\ast\,\,  = 
{\beginpicture
\setcoordinatesystem units <0.2cm,0.15cm>         
\setplotarea x from -2 to 5, y from -3 to 5    
\linethickness=0.5pt                          
\put{$\bullet$} at 1 -1 \put{$\bullet$} at 1 2
\put{$\bullet$} at 2 -1 \put{$\bullet$} at 2 2
\put{$\bullet$} at 3 -1 \put{$\bullet$} at 3 2
\put{$\bullet$} at 4 -1 \put{$\bullet$} at 4 2
\plot 0 -2.5  0 3.5 /
\plot -.5 -2.5  -.5 3.5 /
\setquadratic
\plot 0 3.5 -.25 3.7 -.5 3.5 /
\plot 0 3.5 -.25 3.3 -.5 3.5 /
\plot 0 -2.5 -.25 -2.7 -.5 -2.5 /
\setlinear
\setquadratic
\plot 1 2 1.5 1.5 2 2 /
\plot 3 2  3.5 1.5 4 2 /
\plot 2 -1 1.5  0  .2 1  -.2 1.1  -.7 1 -.9 .5  -.7 0  /
\plot .2 0 .75 -.3 1 -1 /
\endpicture}
+{\beginpicture
\setcoordinatesystem units <0.2cm,0.15cm>         
\setplotarea x from -2 to 5, y from -3 to 5    
\linethickness=0.5pt                          
\put{$\bullet$} at 1 -1 \put{$\bullet$} at 1 2
\put{$\bullet$} at 2 -1 \put{$\bullet$} at 2 2
\put{$\bullet$} at 3 -1 \put{$\bullet$} at 3 2
\put{$\bullet$} at 4 -1 \put{$\bullet$} at 4 2
\plot 0 -2.5  0 3.5 /
\plot -.5 -2.5  -.5 3.5 /
\setquadratic
\plot 0 3.5 -.25 3.7 -.5 3.5 /
\plot 0 3.5 -.25 3.3 -.5 3.5 /
\plot 0 -2.5 -.25 -2.7 -.5 -2.5 /
\setlinear
\setquadratic
\plot 1 -1 1.5 -.5 2 -1 /
\plot 1 2 .75 1.5  -.7 1   -.9 .5     -.7  0  /
\plot .2 0 1 .3 2 2 /
\plot 3 2  3.5 1.5 4 2 /
\plot 3 -1 3.5 -.5 4 -1 /
\endpicture} 
\end{array}
$$
View $d_\gamma$ 
and $d_\gamma^*$ as elements of $\widehat{TL}_k^a$ by setting
$$d_\gamma = d_{\gamma1^{k-i}},
\qquad\hbox{if  $\gamma$ is a composition of $i$ with $i < k$.}
$$
With this notation, expanding the first few $m_i$ in terms of diagrams gives
\begin{align*}
(q-q^{-1})m_1 &= q^{-1} d_1^\ast,
\qquad
(q-q^{-1})m_2  = xd_2-q^{-1}d_2^*, \\
(q-q^{-1})m_3 &=  qx d_{1,2} - q^{-1} [2]  d_{1,2}^\ast - x d_3 + q^{-1} d_3^\ast, \\
(q-q^{-1})m_4 &= q^2 x d_{1^2,2} - q^{-1}([3]-[1]) d_{1^2,2}^\ast
- x [2]  d_{2,2} + q^{-1} [2] d_{2,2}^\ast \\
&\quad - qx d_{1,3} + q^{-1}[2]  d_{1,3}^\ast + x d_4 - q^{-1} d_4^\ast, \\
(q-q^{-1})m_5 &= 
q^3 x d_{1^3,2} - q^{-1} ([4]-[2]) d_{1^3,2}^\ast  
- q^2 x d_{1^2,3} + q^{-1} ([3]-[1]) d_{1^2,3}^\ast \\
&\quad +q x d_{1,4} - q^{-1} [2]  d_{1,4}^\ast 
- qx [2] d_{1,2,2} + q^{-1} [2]^2 d_{1,2,2}^\ast
+ qx [2] d_{2,3} - q^{-1} [2] d_{2,3}^\ast \\
&\quad- x ([3]-[1]) d_{2,1,2} + q^{-1} ([3]-[1]) d_{2,1,2}^\ast 
 + x [2] d_{3,2} - q^{-1} [2] d_{3,2}^\ast
- x d_{5} + q^{-1} d_{5}^\ast,
\end{align*}
where, as in Lemma \ref{recursion},  $x$ is the constant defined by the equation
$e_1X^{-\varepsilon_1}e_1 = xe_1.$

\begin{thm} \label{ExpansionInDiagrams}  Let $x$ be the constant defined by the equation
$e_1X^{-\varepsilon_1}e_1 = xe_1$.   Then $(q-q^{-1})m_1 = q^{-1} d_1$, 
$(q-q^{-1})m_2 = x d_2 - q^{-1} d_2^\ast$ and, for $i\ge 2$,
$$m_i = \sum_{\mathrm{compositions}\ \gamma} (m_i)_\gamma d_\gamma + 
(m_i)_{\gamma}^* d_\gamma^*,$$
where the sum is over all compositions $\gamma= 1^{b_1}r_11^{b_2}r_2\cdots 1^{b_\ell}r_\ell$ of $i$ with $r_\ell > 1$,
and
\begin{align*}
(m_i)_\gamma &= 
(-1)^{|\gamma|-\ell(\gamma)-1} \frac{q^{b_1}x}{q-q^{-1}} 
\displaystyle{\prod_{b_j\ge 0, j>1}} ([b_j+2]-[b_j]), \quad\hbox{and} \\
(m_i)_{\gamma}^* &= 
(-1)^{|\gamma|-\ell(\gamma)} \frac{q^{-1}}{q-q^{-1}} 
([b_1+1]-[b_1-1]) \displaystyle{\prod_{b_j\ge 0, j>1}} ([b_j+2]-[b_j]),
\end{align*}
with $\ell(\gamma) = \ell+b_1+\cdots+b_\ell$. 
\end{thm}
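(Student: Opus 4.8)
The plan is to induct on $i$, using the recursion of Lemma~\ref{recursion} to pass from the expansions of $m_1,\dots,m_{i-1}$ to that of $m_i$. The cases $i=1,2$ are exactly the displayed base cases, so assume the asserted formula for all $m_\ell$ with $\ell<i$ and substitute into
$$
m_i = \frac{q^{i-2}x}{q-q^{-1}}\,e_{i-1} - (e_{i-1}m_{i-1}+m_{i-1}e_{i-1}) - \sum_{\ell=1}^{i-2}\bigl([i-\ell]-[i-\ell-2]\bigr)\,m_\ell e_{i-1}.
$$
The entire problem then reduces to multiplying the basis elements $d_\gamma$ and $d_\gamma^\ast$ on the left and right by the single generator $e_{i-1}$, re-expanding the products in the same basis, and collecting like terms.

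First I would establish the diagrammatic multiplication rules, which split into two regimes according to where the dots $i-1$ and $i$ sit relative to $\gamma$. When $\gamma$ is a composition of some $\ell\le i-2$, both dots are through-strands in every summand of $d_\gamma$ and of $d_\gamma^\ast$, so capping them off with $e_{i-1}$ creates no closed component and produces no scalar: one gets $d_\gamma e_{i-1}=d_{\gamma 1^{i-2-\ell}2}$ and $d_\gamma^\ast e_{i-1}=d^\ast_{\gamma 1^{i-2-\ell}2}$. This is the operation that appends a new final block of size $2$ preceded by a run of $b=i-2-\ell$ ones, and the decisive arithmetic fact is the identity
$$
[i-\ell]-[i-\ell-2]=\{i-\ell-1\}=\{b+1\}=[b+2]-[b],
$$
so the scalar carried by the $\ell$-th summand of the recursion is exactly the factor the target formula attaches to this freshly created run of ones. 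Hence $-\bigl([i-\ell]-[i-\ell-2]\bigr)m_\ell e_{i-1}$ transports the expansion of $m_\ell$ into that of $m_i$, multiplying each coefficient by the correct new factor $[b+2]-[b]$ and reversing the sign once; since $|\gamma 1^{b}2|-\ell(\gamma 1^{b}2)=(|\gamma|-\ell(\gamma))+1$, this single reversal is precisely what the exponents $(-1)^{|\gamma|-\ell(\gamma)-1}$ and $(-1)^{|\gamma|-\ell(\gamma)}$ require, and the product $\prod_{j>1}([b_j+2]-[b_j])$ is assembled one factor per appended block. The seed of the whole process is $m_1=q^{-1}(q-q^{-1})^{-1}X^{-\varepsilon_1}$, whose single leading run accounts for the distinguished leftmost factors, namely $q^{b_1}$ in the unstarred case and $\{b_1\}=[b_1+1]-[b_1-1]$ in the starred case; the constant $x$ appearing in $(m_i)_\gamma$ is introduced solely through the generator term, via the relation $e_1X^{-\varepsilon_1}e_1=xe_1$, and is already visible in $q^{i-2}x(q-q^{-1})^{-1}e_{i-1}$, which is the $\gamma=1^{i-2}2$ contribution.

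The second regime is the crux. In the symmetrized term $e_{i-1}m_{i-1}+m_{i-1}e_{i-1}$, the composition $\gamma$ runs over compositions of $i-1$ ending in a genuine block, so dot $i-1$ lies inside the last block while dot $i$ is a through-strand. Here $e_{i-1}$ either lengthens the last block, turning a trailing part $r_\ell$ into $r_\ell+1$, which leaves the number of parts unchanged, raises $|\gamma|-\ell(\gamma)$ by one and so flips the sign once while creating no new run of ones; or, when the relevant edge already wraps the flagpole, it yields a starred diagram together with a scalar $[2]$ absorbed from a loop encircling the pole. The work is then to verify, diagram by diagram, how wrapping the first edge around the pole interacts with capping the last two dots, and in particular that the two possible wrapped pictures combine into the single leftmost factor $\{b_1\}=[b_1+1]-[b_1-1]$ distinguishing $(m_i)^\ast_\gamma$ from $(m_i)_\gamma$. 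The delicate cases are precisely those where the count of wrapped diagrams drops --- recall that only one new diagram is produced when the first top edge already meets the first bottom vertex, which happens exactly in the small-$b_1$ situations --- and this bookkeeping is the main obstacle. Once the multiplication rules are in hand, collecting the coefficient of each $d_\gamma$ and each $d_\gamma^\ast$ and matching it against the product-and-sign formula is a routine induction, each factor of the product corresponding to one appended run and each minus sign in the recursion to one factor of $-1$.
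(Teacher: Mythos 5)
Your induction scheme---the base cases, Lemma \ref{recursion}, the appending-block regime with the identity $[i-\ell]-[i-\ell-2]=[b+2]-[b]$, and the sign count $|\gamma 1^b2|-\ell(\gamma 1^b2)=|\gamma|-\ell(\gamma)+1$---is exactly the paper's argument, and that part is correct. The gap is at the step you yourself call the crux. In the symmetrized term $e_{i-1}m_{i-1}+m_{i-1}e_{i-1}$ there is \emph{no} case in which multiplication by $e_{i-1}$ ``yields a starred diagram together with a scalar $[2]$ absorbed from a loop encircling the pole.'' The pole-wrapping edges of a starred diagram are the first edges of each row, while $e_{i-1}$ is glued on at dots $i-1,i$; no loop around the pole can form (nor even a contractible loop, since dot $i$ is a through-strand in every summand of $m_{i-1}$), and in the affine Temperley-Lieb algebra a pole loop would not be the scalar $[2]$ anyway---eliminating pole loops is precisely the role of the constant $x$ in $e_1X^{-\varepsilon_1}e_1=xe_1$, and that reduction was already performed, once and for all, inside the proof of Lemma \ref{recursion}. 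What the induction actually needs here is that the symmetrized term treats starred and unstarred diagrams identically: $e_{i-1}d_{\gamma'r}+d_{\gamma'r}e_{i-1}=d_{\gamma',r+1}$ and $e_{i-1}d_{\gamma'r}^*+d_{\gamma'r}^*e_{i-1}=d_{\gamma',r+1}^*$, with no scalar, because every diagram of cycle type $(\gamma',r+1)$ carries exactly one of a top cup or a bottom cap on $\{i-1,i\}$ and hence occurs exactly once in exactly one of the two products. With your mechanism the starred coefficients acquire spurious factors of $[2]$: already for $i=3$ it would produce $[2]\frac{q^{-1}}{q-q^{-1}}d_3^*$ where the correct term is $\frac{q^{-1}}{q-q^{-1}}d_3^*$.

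The same confusion infects your account of the leftmost starred factor. The quantity $[b_1+1]-[b_1-1]$ does not arise from ``two possible wrapped pictures combining'' under multiplication; it is the coefficient $[i-1]-[i-3]$ of the $\ell=1$ term $-([i-1]-[i-3])m_1e_{i-1}$ of the recursion, which is the sole seed of the starred classes because $m_1=\frac{q^{-1}}{q-q^{-1}}d_1^*$ is purely starred: with $b_1=i-2$ that term is $-([b_1+1]-[b_1-1])\frac{q^{-1}}{q-q^{-1}}d^*_{1^{b_1}2}$, exactly parallel to the unstarred seed $q^{b_1}x$ coming from the generator term $\frac{q^{i-2}x}{q-q^{-1}}e_{i-1}$. (Your opening attribution of the unstarred $q^{b_1}$ to $m_1$ is off for the same reason: $m_1$ contributes nothing unstarred.) After these seeds, both leftmost factors are simply carried along unchanged by the two multiplication rules. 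A further small slip: the count of wrapped diagrams is one per summand when $b_1\ge 1$ (dot $1$ is then a through-strand meeting $1'$) and two when $b_1=0$, the opposite of your ``small-$b_1$'' claim; but that bookkeeping is internal to the definition of $d_\gamma^*$ and never enters the induction once the multiplication rules are stated for $d_\gamma^*$ as a whole.
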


\begin{proof}  From our computations above,  $m_1 = A d_1^\ast$ and 
$m_2 = B d_2 - A d_2^\ast$, where
$$A = \frac{q^{-1}}{q-q^{-1}} \qquad\hbox{and}\qquad B = \frac{x}{q-q^{-1}}.$$
Let $m_1 = A d_1^\ast$. For $i > 2$ the recursion in Lemma \ref{recursion} gives
\begin{align*}
m_i &= q^{i-2} B e_{i-1}  - (e_{i-1}m_{i-1}+m_{i-1}e_{i-1})  - \sum_{\ell=1}^{i-2} ([i-\ell]-[i-\ell-2])m_\ell e_{i-1} \\
&= q^{i-2} B d_{1^{i-2},2} - ([i-1]-[i-3]) A d_{1^{i-2},2}^\ast
 - \big( (m_{i-1})_{\gamma'r}d_{\gamma',r+1}+(m_{i-1})_{\gamma'r}^* d_{\gamma',r+1}^*\big) \\
&\qquad +\sum_{\ell=2}^{i-2} - ([i-\ell]-[i-\ell-2])
\big( (m_\ell)_{\gamma'}d_{\gamma'1^{i-2-\ell}2}+(m_\ell)_{\gamma'}^*d_{\gamma'1^{i-2-\ell}2}^*\big).
\end{align*}
So if $d$ has cycle type $\gamma = 1^{b_1}r_11^{b_2}r_2\cdots 1^{b_\ell}r_\ell$ with $r_\ell >0$, then 
\begin{enumerate}
\item[($a$)] Each part of size $r$ ($r>1$) contributes $(-1)^{r-1}$ to the coefficient. Thus, there is a total contribution of $(-1)^{|\gamma| - \ell(\gamma)}$ from these parts.
\item[($b$)] Each inner $1^b$ ($b\ge 0$) contributes a factor of $[b+2]-[b]$ to the coefficient.
\item[($c$)] The first $1^b$ ($b>0$) contributes a $- q^{b} B$ in a nonstarred class,
\item[($c'$)] The first $1^b$ ($b=0$) contributes a $-B$ in a nonstarred class, which is the same as case ($c$) with $b=0$.
\item[($d$)] The first $1^b$ ($b>0$) contributes a $([b+1]-[b-1])A$ in a starred class.
\item[($d'$)] The first $1^b$ ($b=0$) contributes an $A$ in a starred class, which is the same as case ($d$) with $b=0$ assuming $[-1]=0$.
\end{enumerate}
\end{proof}

\begin{remark}\label{integrality}
To view $m_1,\ldots, m_k$ in  the (nonaffine) Temperley-Lieb algebra $\TL_k(n)$
(via \eqref{AffineToTL})
let $X^{-\varepsilon_1}=1$ so that $x = q+q^{-1}$. 
If $b_1 >1$ then  $d_\gamma^\ast = d_\gamma$ and 
if $b_1 = 0$ then $d_\gamma^\ast = 2 d_\gamma$.  In both cases
the coefficients in Theorem  \ref{ExpansionInDiagrams} specialize to
\begin{align*}
(m_i)_\gamma + (m_i)_\gamma^\ast 
&= (-1)^{|\gamma|-\ell(\gamma)-1}
[b_1 + 1] 
\displaystyle{\prod_{b_j\ge 0, j>1}} ([b_j+2]-[b_j]) 
\end{align*}
and
$$
m_i = \sum_{\gamma} \left((m_i)_\gamma + (m_i)_\gamma^\ast\right) d_\gamma ,
$$
where the sum is over compositions $\gamma= 1^{b_1}r_11^{b_2}r_2\cdots 1^{b_\ell}r_\ell$ of $i$ with $r_\ell > 1$.
The first few examples are
\begin{align*}
m_1 &= \frac{q^{-1}}{q-q^{-1}} = \frac{q^{-1}}{q-q^{-1}} d_1, \qquad
m_2 = e_2= d_2, \qquad
m_3  = [2]d_{12}-d_3, \\
 m_4 &= [3] d_{1^2,2} - [2] d_{2,2} - [2] d_{1,3} + d_4, \\
  m_5 &= [4] d_{1^3,2} - [3] d_{1^2,3} + [2] d_{1,4} -[2]^2 d_{1,2,2} + [2] d_{2,3} - ([3]-[1]) d_{2,1,2} + [2] d_{3,2} - d_5.
 \end{align*}
\end{remark}

\end{section}

\begin{section}{Schur functors}

\subsection{$R$-matrices and quantum Casimir Elements}

Let $U_h\fg$ be the Drinfeld-Jimbo quantum group corresponding
to a finite dimensional complex semisimple Lie algebra $\fg$.  We shall use the notations
and conventions for $U_h\fg$ as in \cite{LR} and \cite{OR}.  There
is an invertible element ${\cal R}=\sum a_i\otimes b_i$ in (a suitable
completion of) $U_h\fg\otimes U_h\fg$ such that, for two $U_h\fg$
modules $M$ and $N$, the map
$$
\beginpicture
\setcoordinatesystem units <1cm,.5cm>         
\setplotarea x from -8 to 2, y from 0 to 1.5    
\put{$
\begin{matrix}
\check R_{MN}\colon &M\otimes N &\longrightarrow &N\otimes M\\
&m\otimes n &\longmapsto &\displaystyle{
\sum b_in\otimes a_i m }
\end{matrix}
$} at -5 1
\put{$M\otimes N$} at 0.5 2.4
\put{$N\otimes M$} at 0.5 0.1
\put{$\bullet$} at  0.1 1.9      
\put{$\bullet$} at  0.9 1.9      %
\put{$\bullet$} at  0.1 .6          
\put{$\bullet$} at  0.9 .6          %
\setquadratic
\plot  0.1 .6  .15 .9  .4 1.15 /
\plot  .6 1.35  .85 1.6  0.9 1.9 /
\plot 0.1 1.9  .15 1.6  .5 1.25  .85 .9  0.9 .6 /
\endpicture
$$
is a $U_h\fg$ module isomorphism. 
In order to be consistent with the graphical calculus these operators should be
written \emph{on the right}.  The element ${\cal R}$ satisfies ``quasitriangularity relations''
(see \cite[(2.1-2.3)]{LR}) which imply that, for $U_h\fg$ modules 
$M,N,P$ and a $U_h\fg$ module isomorphism $\tau_M\colon M\to M$,
\begin{align*}
\beginpicture
\setcoordinatesystem units <1cm,.5cm>         
\setplotarea x from 0 to 2, y from -1 to 2    
\put{$M\otimes N$} at 0.5 2.4
\put{$N\otimes M$} at 0.5 -1.2
\put{$\tau_M$} at 1.2 -0.1
\put{$\bullet$} at  0.1 1.9      
\put{$\bullet$} at  0.9 1.9      %
\put{$\bullet$} at  0.1 -0.7          
\put{$\bullet$} at  0.9 -0.7          %
\plot  0.9 .6   0.9 -0.7 /
\plot  0.1 .6   0.1 -0.7 /
\setquadratic
\plot  0.1 .6  .15 .9  .4 1.15 /
\plot  .6 1.35  .85 1.6  0.9 1.9 /
\plot 0.1 1.9  .15 1.6  .5 1.25  .85 .9  0.9 .6 /
\endpicture
~~&=~~
\beginpicture
\setcoordinatesystem units <1cm,.5cm>         
\setplotarea x from 0 to 2, y from 0 to 2    
\put{$M\otimes N$} at 0.5 2.4
\put{$N\otimes M$} at 0.5 -1.2
\put{$\tau_M$} at -0.2 1.3
\put{$\bullet$} at  0.1 1.9      
\put{$\bullet$} at  0.9 1.9      %
\put{$\bullet$} at  0.1 -0.7          
\put{$\bullet$} at  0.9 -0.7          %
\plot  0.9 .6   0.9 1.9 /
\plot  0.1 .6   0.1 1.9 /
\setquadratic
\plot  0.1 -0.7  .15 -0.4  .4 -0.15 /
\plot  .6 0.05  .85 .3  0.9 .6 /
\plot 0.1 .6  .15 0.3  .5 -0.05  .85 -0.4  0.9 -0.7 /
\endpicture
\\
\check R_{MN}(\id_N\otimes\tau_M) &= (\tau_M\otimes \id_N)\check R_{MN}, 
\end{align*}
\begin{equation*}
\beginpicture
\setcoordinatesystem units <1cm,.5cm>         
\setplotarea x from -2 to 2, y from 0 to 1.5    
\put{$M\otimes\, (N\otimes P)$} at -2.1 1.8
\put{$(N\otimes P)\ \otimes M$} at -2.15 -0.6
\put{$\check R_{M,N\otimes P}
= 
(\check R_{MN}\otimes \id_P)(\id_N\otimes \check R_{MP})$} at -.7 -2.5
\put{$=$} at -.7 0.6
\put{$\bullet$} at  -1.65 1.2      
\put{$\bullet$} at  -2.6 1.2      %
\put{$\bullet$} at  -1.65 0          
\put{$\bullet$} at  -2.6  0          %
\put{$M\otimes N\otimes P$} at 0.9 2.5
\put{$N\otimes P\otimes M$} at 0.9 -1.4
\put{$\bullet$} at  0.15 1.9      
\put{$\bullet$} at  0.9 1.9      %
\put{$\bullet$} at  1.65 1.9      %
\put{$\bullet$} at  0.15 -0.7          
\put{$\bullet$} at  0.9 -0.7          %
\put{$\bullet$} at  1.65 -0.7          %
\plot  1.65 1.9   1.65 .6 /
\plot  0.15 .6   0.15 -0.7 /
\setquadratic
\plot  0.15 .6  .2 .9  .45 1.15 /
\plot  .65 1.35  .85 1.6  0.9 1.9 /
\plot 0.15 1.9  .2 1.6  .55 1.25  .85 .9  0.9 .6 /
\plot  0.9 -0.7  0.95 -0.4  1.15 -0.15 /
\plot  1.35 .05  1.6 .3  1.65 .6 /
\plot 0.9 .6  0.95 .3  1.25 -0.05  1.6 -0.4  1.65 -0.7 /
\plot  -2.6 0  -2.55 .3  -2.2 0.55 /
\plot  -2 .75  -1.7 1  -1.65 1.3 /
\plot -2.6 1.3  -2.55 1  -2.1 .65  -1.7 .3  -1.65 0 /
\endpicture
\qquad\qquad
\beginpicture
\setcoordinatesystem units <1cm,.5cm>         
\setplotarea x from -2 to 2, y from 0 to 1.5    
\put{$(M\otimes N)\,\otimes P$} at -2.2 1.8
\put{$P\otimes (M\otimes N)$} at -2.05 -0.6
\put{$\check R_{M\otimes N,P}
=
(\id_M\otimes \check R_{NP})(\check R_{MP}\otimes \id_N),$} at -.7 -2.5
\put{$=$} at -.7 0.6
\put{$\bullet$} at  -1.65 1.2      
\put{$\bullet$} at  -2.6 1.2      %
\put{$\bullet$} at  -1.65 0          
\put{$\bullet$} at  -2.6  0          %
\put{$M\otimes N\otimes P$} at 0.9 2.5
\put{$P\otimes M\otimes N$} at 0.9 -1.4
\put{$\bullet$} at  0.15 1.9      
\put{$\bullet$} at  0.9 1.9      %
\put{$\bullet$} at  1.65 1.9      %
\put{$\bullet$} at  0.15 -0.7          
\put{$\bullet$} at  0.9 -0.7          %
\put{$\bullet$} at  1.65 -0.7          %
\plot  1.65 0.6   1.65 -0.7 /
\plot  0.15 1.9   0.15 0.6 /
\setquadratic
\plot  0.15 -0.7  .2 -0.4  .45 -0.15 /
\plot  .65 .05  .85 0.3  0.9 0.6 /
\plot 0.15 0.6  .2 0.3  .55 -0.05  .85 -0.4  0.9 -0.7 /
\plot  0.9 0.6  0.95 0.9  1.15 1.15 /
\plot  1.35 1.35  1.6 1.6  1.65 1.9 /
\plot 0.9 1.9  0.95 1.6  1.25 1.25  1.6 0.9  1.65 0.6 /
\plot  -2.6 0  -2.55 .3  -2.2 0.55 /
\plot  -2 .75  -1.7 1  -1.65 1.3 /
\plot -2.6 1.3  -2.55 1  -2.1 .65  -1.7 .3  -1.65 0 /
\endpicture
\end{equation*}
which, together, imply the braid relation
\begin{align*}
\beginpicture
\setcoordinatesystem units <1cm,.5cm>         
\setplotarea x from 0 to 2, y from -3 to 2.6    
\put{$M\otimes N\otimes P$} at 0.9 2.3
\put{$P\otimes N\otimes M$} at 0.9 -2.5
\put{$\bullet$} at  0.15 1.9      
\put{$\bullet$} at  0.9 1.9      %
\put{$\bullet$} at  1.65 1.9      %
\put{$\bullet$} at  0.15 -2          
\put{$\bullet$} at  0.9 -2          %
\put{$\bullet$} at  1.65 -2          %
\plot  1.65 1.9   1.65 .6 /
\plot  0.15 .6   0.15 -0.7 /
\plot  1.65 -0.7   1.65 -2 /
\setquadratic
\plot  0.15 .6  .2 .9  .45 1.15 /
\plot  .65 1.35  .85 1.6  0.9 1.9 /
\plot 0.15 1.9  .2 1.6  .55 1.25  .85 .9  0.9 .6 /
\plot  0.9 -0.7  0.95 -0.4  1.15 -0.15 /
\plot  1.35 .05  1.6 .3  1.65 .6 /
\plot 0.9 .6  0.95 .3  1.25 -0.05  1.6 -0.4  1.65 -0.7 /
\plot  0.15 -2  .2 -1.7  .45 -1.45 /
\plot  .65 -1.25  .85 -1  0.9 -0.7 /
\plot 0.15 -0.7  .2 -1  .55 -1.35  .85 -1.7  0.9 -2 /
\endpicture
~~&=~~
\beginpicture
\setcoordinatesystem units <1cm,.5cm>         
\setplotarea x from 0 to 2, y from -3 to 2.6    
\put{$M\otimes N\otimes P$} at 0.9 2.3
\put{$P\otimes N\otimes M$} at 0.9 -2.5
\put{$\bullet$} at  0.15 1.9      
\put{$\bullet$} at  0.9 1.9      %
\put{$\bullet$} at  1.65 1.9      %
\put{$\bullet$} at  0.15 -2          
\put{$\bullet$} at  0.9 -2          %
\put{$\bullet$} at  1.65 -2          %
\plot  0.15 1.9   0.15 .6 /
\plot  1.65 .6   1.65 -0.7 /
\plot  0.15 -0.7   0.15 -2 /
\setquadratic
\plot  0.9 0.6  0.95 0.9  1.15 1.15 /
\plot  1.35 1.35  1.6 1.6  1.65 1.9 /
\plot 0.9 1.9  0.95 1.6  1.25 1.25  1.6 0.9  1.65 0.6 /
\plot  0.15 -0.7  .2 -0.4  .45 -0.15 /
\plot  .65 .05  .85 .3  0.9 .6 /
\plot 0.15 .6  .2 .3  .55 -0.05  .85 -0.4  0.9 -0.7 /
\plot  0.9 -2  0.95 -1.7  1.15 -1.45 /
\plot  1.35 -1.25  1.6 -1  1.65 -0.7 /
\plot 0.9 -0.7  0.95 -1  1.25 -1.35  1.6 -1.7  1.65 -2 /
\endpicture
\\
(\check R_{MN}\otimes \id_P)
(\id_N\otimes \check R_{MP})
(\check R_{NP}\otimes \id_M)
&=
(\id_M\otimes \check R_{NP})
(\check R_{MP}\otimes \id_N)
(\id_P\otimes \check R_{MN}).
\end{align*}

Let $\rho$ be such that $\langle \rho, \alpha_i\rangle = 1$ for all simple roots
$\alpha_i$.  As explained in \cite[(2.14)]{LR} and \cite{Dr1}, there is a
\emph{quantum Casimir element} $e^{-h\rho}u$ in the center of $U_h\fg$
and, for a $U_h\fg$ module $M$ we define a $U_h\fg$ module isomorphism
$$
\beginpicture
\setcoordinatesystem units <1cm,.6cm>         
\setplotarea x from -5 to 1, y from 0 to 1.5    
\put{$
\begin{matrix}
C_M\colon &M &\longrightarrow &M \\
&m &\longmapsto &(e^{-h\rho}u)m 
\end{matrix}
$} at -4 1.2
\put{$M$} at 0.7 2.2
\put{$M$} at 0.7 0.3
\put{$C_M$} at 1.2 1.3
\put{$\bullet$} at  0.7 1.8      
\put{$\bullet$} at  0.7 .7          
\plot  0.7 1.8   0.7 .7 /
\endpicture
$$
and the elements $C_M$ satisfy
\begin{equation}\label{casimir}
C_{M\otimes N} =
(\check R_{MN}\check R_{NM})^{-1}
(C_M\otimes C_N),
\qquad\hbox{and}\qquad
C_M = q^{-\langle \lambda,\lambda+2\rho\rangle} \id_M
\end{equation}
if $M$ is a $U_h\fg$ module generated by a highest weight vector 
$v^+$ of weight $\lambda$ (see \cite[Prop. 2.14]{LR} or \cite[Prop. 3.2]{Dr1}).
Note that $\langle \lambda,\lambda+2\rho\rangle
=\langle \lambda+\rho,\lambda+\rho\rangle - \langle\rho,\rho\rangle$
are the eigenvalues of the classical Casimir operator \cite[7.8.5]{Dx}.
From the relation \eqref{casimir} it follows that if $M=L(\mu)$,
$N=L(\nu)$ are finite dimensional irreducible $U_h\fg$ modules
then $\check R_{MN}\check R_{NM}$ acts on the
$\lambda$ isotypic component 
$L(\lambda)^{\oplus c_{\mu\nu}^\lambda}$
of the decomposition
\begin{equation}\label{fulltwist}
L(\mu)\otimes L(\nu) = \bigoplus_\lambda L(\lambda)^{\oplus c_{\mu\nu}^\lambda}
\qquad\hbox{by the constant}\qquad
q^{\langle\lambda,\lambda+2\rho\rangle 
-\langle\mu,\mu+2\rho\rangle 
-\langle\nu,\nu+2\rho\rangle}.
\end{equation}

\subsection{The $\tilde \cB_k$ module $M\otimes V^{\otimes k}$}

Let $U_h\fg$ be a quantum group and let 
$M$ and $V$ be $U$-modules such that the operators
$\check R_{MV}$, $\check R_{VM}$ and $\check R_{VV}$ are
well defined.
Define $\check R_i$, $1\le i\le k-1$, and $\check R_0^2$
in $\End_{U}(M\otimes V^{\otimes k})$ by
$$\check R_i = \id_M\otimes \id_V^{\otimes (i-1)}
\otimes \check R_{VV}\otimes \id_V^{\otimes (k-i-1)}
\qquad\hbox{and}\qquad
\check R_0^2 = (\check R_{MV}\check R_{VM})\otimes \id_V^{\otimes (k-1)}.
$$
Then the braid relations
$$
\check R_i\check R_{i+1}\check R_i 
=
\beginpicture
\setcoordinatesystem units <.3cm,.3cm>         
\setplotarea x from 0 to 2, y from -2 to 2    
\plot  1.65  2.6   1.65  1.3 /
\plot  0.15  1.3     0.15  -0.1 /
\plot  1.65  -0.1   1.65  -1.4 /
\setsolid
\setquadratic
\plot  0.15 -1.4  .2 -1.1  .45 -0.85 /
\plot  .65 -0.7  .85 -0.3  0.9 0 /
\plot 0.15 0  .2 -0.3  .55 -.8  .85 -1.1  0.9 -1.4 /
\plot  0.9 0  0.95 0.3  1.15 0.55 /
\plot  1.35 0.75  1.6 1  1.65 1.3 /
\plot 0.9 1.3  0.95 1  1.25 .65  1.6 .3  1.65 0 /
%
\plot  0.15 1.3  .2 1.6  .45 1.85 /
\plot  .65 2.05  .85 2.3  0.9 2.6 /
\plot 0.15 2.6  .2 2.3  .55 1.95  .85 1.6  0.9 1.3 /
\endpicture
=
\beginpicture
\setcoordinatesystem units <.3cm,.3cm>         
\setplotarea x from 0 to 2, y from -2 to 2    
\plot  0.15  2.6   0.15  1.3 /
\plot  1.65  1.3   1.65  0 /
\plot  0.15  0   0.15  -1.4 /
\setsolid
\setquadratic
\plot  0.9 1.3  0.95 1.6  1.15  1.85 /
\plot  1.35 2.05  1.6 2.3  1.65 2.6 /
\plot 0.9 2.6  0.95 2.3  1.25 1.95  1.6 1.6  1.65 1.3 /
\plot  0.15 0  .2 .3  .45  0.55 /
\plot  .65 .75  .85 1  0.9 1.3 /
\plot 0.15 1.3  .2 1  .55 .65  .85 .3  0.9 0 /
\plot  0.9 -1.4  0.95 -1.1  1.15 -0.85 /
\plot  1.35 -0.7  1.6 -0.3  1.65 0 /
\plot 0.9 -0  0.95 -0.3  1.25 -.8  1.6 -1.1  1.65 -1.4 /
\endpicture
= \check R_{i+1}\check R_i \check R_{i+1}$$
and
$$
\check R_0^2\check R_1\check R_0^2\check R_1 
=
\beginpicture
\setcoordinatesystem units <.3cm,.3cm>         
\setplotarea x from 0 to 2, y from -2 to 2    
\put{$\cdot$} at  0.15 3.9      
\put{$\cdot$} at  0.9 3.9      %
\put{$\cdot$} at  1.65 3.9      %
\put{$\cdot$} at  0.15  -4          
\put{$\cdot$} at  0.9  -4          %
\put{$\cdot$} at  1.65  -4          %
\plot  0.15  -2.7   0.15  -4 /
\plot  1.65  3.9   1.65  2.6 /
\plot  1.65  2.6   1.65  1.3 /
\plot  0.15  1.3     0.15  -0.1 /
\plot  1.65  -0.1   1.65  -1.4 /
\plot  1.65  -1.4   1.65  -2.7 /
\setsolid
\setquadratic
\plot  0.15 -1.4  .2 -1.1  .45 -0.85 /
\plot  .65 -0.7  .85 -0.3  0.9 0 /
\plot 0.15 0  .2 -0.3  .55 -.8  .85 -1.1  0.9 -1.4 /
\plot  0.9 0  0.95 0.3  1.15 0.55 /
\plot  1.35 0.75  1.6 1  1.65 1.3 /
\plot 0.9 1.3  0.95 1  1.25 .65  1.6 .3  1.65 0 /
\plot  0.15 -2.7  .2 -2.4  .45  -2.15 /
\plot  .65 -2.05  .85 -1.7  0.9 -1.3 /
\plot 0.15 -1.4  .2 -1.7  .55 -2.05  .85 -2.4  0.9 -2.7 /
\plot  0.9 -4  0.95 -3.7  1.15 -3.45 /
\plot  1.35 -3.25  1.6 -3  1.65 -2.7 /
\plot 0.9 -2.7  0.95 -3  1.25 -3.35  1.6 -3.7  1.65 -4 /
\plot  0.15 2.6  .2 2.9  .45 3.15 /
\plot  .65 3.35  .85 3.6  0.9 3.9 /
\plot 0.15 3.9  .2 3.6  .55 3.25  .85 2.9  0.9 2.6 /
\plot  0.15 1.3  .2 1.6  .45 1.85 /
\plot  .65 2.05  .85 2.3  0.9 2.6 /
\plot 0.15 2.6  .2 2.3  .55 1.95  .85 1.6  0.9 1.3 /
\endpicture
=
\beginpicture
\setcoordinatesystem units <.3cm,.3cm>         
\setplotarea x from 0 to 2, y from -2 to 2    
\put{$\cdot$} at  0.15 3.9      
\put{$\cdot$} at  0.9 3.9      %
\put{$\cdot$} at  1.65 3.9      %
\put{$\cdot$} at  0.15  -4          
\put{$\cdot$} at  0.9  -4          %
\put{$\cdot$} at  1.65  -4          %
\plot  1.65  3.9   1.65  2.6 /
\plot  0.15  2.6   0.15  1.3 /
\plot  1.65  1.3   1.65  0 /
\plot  0.15  0   0.15  -1.4 /
\plot  1.65  -1.4   1.65  -2.7 /
\plot  0.15  -2.7   0.15  -4 /
\setdashes
\plot  0  2.6   2  2.6 /
\plot  0  -1.4   2  -1.4 /
\setsolid
\setquadratic
\plot  0.15 2.6  .2 2.9  .45 3.15 /
\plot  .65 3.35  .85 3.6  0.9 3.9 /
\plot 0.15 3.9  .2 3.6  .55 3.25  .85 2.9  0.9 2.6 /
\plot  0.9 1.3  0.95 1.6  1.15  1.85 /
\plot  1.35 2.05  1.6 2.3  1.65 2.6 /
\plot 0.9 2.6  0.95 2.3  1.25 1.95  1.6 1.6  1.65 1.3 /
\plot  0.15 0  .2 .3  .45  0.55 /
\plot  .65 .75  .85 1  0.9 1.3 /
\plot 0.15 1.3  .2 1  .55 .65  .85 .3  0.9 0 /
\plot  0.9 -1.4  0.95 -1.1  1.15 -0.85 /
\plot  1.35 -0.7  1.6 -0.3  1.65 0 /
\plot 0.9 -0  0.95 -0.3  1.25 -.8  1.6 -1.1  1.65 -1.4 /
\plot  0.15 -2.7  .2 -2.4  .45 -2.15 /
\plot  .65 -1.95  .85 -1.7  0.9 -1.4 /
\plot 0.15 -1.4  .2 -1.7  .55 -2.05  .85 -2.4  0.9 -2.7 /
\plot  0.9 -4  0.95 -3.7  1.15 -3.45 /
\plot  1.35 -3.25  1.6 -3  1.65 -2.7 /
\plot 0.9 -2.7  0.95 -3  1.25 -3.35  1.6 -3.7  1.65 -4 /
\endpicture
=
\beginpicture
\setcoordinatesystem units <.3cm,.3cm>         
\setplotarea x from 0 to 2, y from -2 to 2    
\put{$\cdot$} at  0.15 3.9      
\put{$\cdot$} at  0.9 3.9      %
\put{$\cdot$} at  1.65 3.9      %
\put{$\cdot$} at  0.15  -4          
\put{$\cdot$} at  0.9  -4          %
\put{$\cdot$} at  1.65  -4          %
\plot  0.15  3.9   0.15  2.6 /
\plot  1.65  2.6   1.65  1.3 /
\plot  0.15  1.3   0.15  0 /
\plot  1.65  0   1.65  -1.4 /
\plot  0.15  -1.4   0.15  -2.7 /
\plot  1.65  -2.7   1.65  -4 /
\setdashes
\plot  0  0   2  0 /
\setsolid
\setquadratic
\plot  0.15 -1.4  .2 -1.1  .45 -0.85 /
\plot  .65 -0.7  .85 -0.3  0.9 0 /
\plot 0.15 0  .2 -0.3  .55 -.8  .85 -1.1  0.9 -1.4 /
\plot  0.9 -2.7  0.95 -2.4  1.15  -2.15 /
\plot  1.35 -1.95  1.6 -1.7  1.65 -1.4 /
\plot 0.9 -1.4  0.95 -1.7  1.25 -2.05  1.6 -2.4  1.65 -2.7 /
\plot  0.15 -4  .2 -3.7  .45  -3.45 /
\plot  .65 -3.25  .85 -3  0.9 -2.7 /
\plot 0.15 -2.7  .2 -3  .55 -3.35  .85 -3.7  0.9 -4 /
\plot  0.9 2.6  0.95 2.9  1.15 3.15 /
\plot  1.35 3.3  1.6 3.7  1.65 4 /
\plot 0.9 4  0.95 3.7  1.25 3.2  1.6 2.9  1.65 2.6 /
\plot  0.15 1.3  .2 1.6  .45 1.85 /
\plot  .65 2.05  .85 2.3  0.9 2.6 /
\plot 0.15 2.6  .2 2.3  .55 1.95  .85 1.6  0.9 1.3 /
\plot  0.9 0  0.95 0.3  1.15 0.55 /
\plot  1.35 0.75  1.6 1  1.65 1.3 /
\plot 0.9 1.3  0.95 1  1.25 .65  1.6 .3  1.65 0 /
\endpicture
=
\beginpicture
\setcoordinatesystem units <.3cm,.3cm>         
\setplotarea x from 0 to 2, y from -2 to 2    
\put{$\cdot$} at  0.15 3.9      
\put{$\cdot$} at  0.9 3.9      %
\put{$\cdot$} at  1.65 3.9      %
\put{$\cdot$} at  0.15  -4          
\put{$\cdot$} at  0.9  -4          %
\put{$\cdot$} at  1.65  -4          %
\plot  0.15  3.9   0.15  2.6 /
\plot  1.65  2.6   1.65  1.3 /
\plot  1.65  1.3   1.65  0 /
\plot  0.15  0     0.15  -1.4 /
\plot  1.65  -1.4   1.65  -2.7 /
\plot  1.65  -2.7   1.65  -4 /
\setdashes
\plot  0  1.3   2  1.3 /
\plot  0  -2.7   2  -2.7 /
\setsolid
\setquadratic
\plot  0.15 -2.7  .2 -2.4  .45 -2.15 /
\plot  .65 -1.95  .85 -1.7  0.9 -1.4 /
\plot 0.15 -1.4  .2 -1.7  .55 -2.05  .85 -2.4  0.9 -2.7 /
\plot  0.9 -1.4  0.95 -1.1  1.15  -0.85 /
\plot  1.35 -0.7  1.6 -0.3  1.65 0 /
\plot 0.9 0  0.95 -0.3  1.25 -0.8  1.6 -1.1  1.65 -1.4 /
\plot  0.15 -4  .2 -3.7  .45  -3.45 /
\plot  .65 -3.25  .85 -3  0.9 -2.7 /
\plot 0.15 -2.7  .2 -3  .55 -3.35  .85 -3.7  0.9 -4 /
\plot  0.9 2.6  0.95 2.9  1.15 3.15 /
\plot  1.35 3.3  1.6 3.7  1.65 4 /
\plot 0.9 4  0.95 3.7  1.25 3.2  1.6 2.9  1.65 2.6 /
\plot  0.15 1.3  .2 1.6  .45 1.85 /
\plot  .65 2.05  .85 2.3  0.9 2.6 /
\plot 0.15 2.6  .2 2.3  .55 1.95  .85 1.6  0.9 1.3 /
\plot  0.15 0  .2 0.3  .45 0.55 /
\plot  .65 .75  .85 1  0.9 1.3 /
\plot 0.15 1.3  .2 1  .55 .65  .85 .3  0.9 0 /
\endpicture
= \check R_1\check R_0^2\check R_1\check R_0^2. 
$$
imply that there is a well defined map
\begin{equation}\label{BraidRep}
\begin{matrix}
\Phi\colon &\tilde {\cal B}_k &\longrightarrow
&\End_{U}(M\otimes V^{\otimes k}) \\
&T_i &\longmapsto &\check R_i, &\qquad &1\le i\le k-1,\\
&X^{\varepsilon_1} &\longmapsto &\check R_0^2,
\end{matrix}
\end{equation}
which makes $M\otimes V^{\otimes k}$ into a right $\tilde {\cal B}_k$
module.  By \eqref{casimir} and the fact that
\begin{equation}\label{XiasRmatrix}
\Phi(X^{\varepsilon_i}) = 
\check R_{M\otimes V^{\otimes(i-1)},V}\check R_{V, M\otimes V^{\otimes {(i-1)}}}
= 
\beginpicture
\setcoordinatesystem units <.5cm,1cm>         
\setplotarea x from -5.5 to 4.5, y from -1.25 to 1.25    
\put{${}^i$} at 1 1.2 
\put{$\bullet$} at -3 0.75      %
\put{$\bullet$} at -2 0.75      %
\put{$\bullet$} at -1 0.75      %
\put{$\bullet$} at  0 0.75      
\put{$\bullet$} at  1 0.75      %
\put{$\bullet$} at  2 0.75      %
\put{$\bullet$} at  3 0.75      %
\put{$\bullet$} at -3 -0.75          %
\put{$\bullet$} at -2 -0.75          %
\put{$\bullet$} at -1 -0.75          %
\put{$\bullet$} at  0 -0.75          
\put{$\bullet$} at  1 -0.75          %
\put{$\bullet$} at  2 -0.75          %
\put{$\bullet$} at  3 -0.75          %
\plot -5 0 4 0 /
\setquadratic
\plot -4.5 1.25  -4.4 .9 -4 .6  / 
\plot  -4 .6 -3.4 .25 -3.25 0 /
\plot -4.25 1.25  -4.15 .9 -3.75 .6  / 
\plot  -3.75 .6 -3.15 .25 -3.0 0 /
\plot -4.5 -1.25  -4.4 -.9 -4 -.6  / 
\plot -4.25 -1.25  -4.15 -.9 -3.75 -.6  / 
\plot -3.25 0 -3.30 -.09  -3.40 -.17 /
\plot -3.00 0 -3.05 -.09 -3.12 -.17 /
\plot  -4.00   -.6 -3.70 -.5 -3.40 -.3 /
\plot  -3.75   -.6 -3.45  -.5 -3.15 -.3 /

\ellipticalarc axes ratio 1:1 360 degrees from -4.5 1.25 center 
at -4.375 1.25
\put{$*$} at -4.375 1.25  
\ellipticalarc axes ratio 1:1 180 degrees from -4.5 -1.25 center 
at -4.375 -1.25 
\setquadratic
\plot -3 0.75  -2.9  .55  -2.5 .375 /
\plot -2.5 .375  -2.1  .25  -2 0 /
\plot -2    0.75     -1.9  .55  -1.5 .375 /
\plot -1.5 0.375  -1.1  .25  -1 0 /
\plot -1    0.75     -0.9  .55  -0.5 .375 /
\plot -0.5 0.375  -0.1  .25  0  0 /
\plot 0    0.75     .1  .55  .5 .375 /
\plot .5  0.375   .9  .25  1  0 /
\plot -3 -0.75  -2.9  -.55  -2.5 -.380 /
\plot -2.3  -.26  -2.1  -.20  -2 0 /
\plot -2    -0.75     -1.9  -.55  -1.7 -.45 /
\plot -1.4 -0.28  -1.1  -.20  -1 0 /
\plot -1    -0.75   -0.9  -.6  -0.7 -.48 /
\plot -0.4 -0.32  -0.1  -.2  0  0 /
\plot 0    -0.75     .1  -.65  .2 -.55 /
\plot .55  -0.38   .9  -.25  1  0 /
\setlinear
\plot 2 .75  2 -.75 /
\plot 3 .75 3 -.75 /
\setquadratic
\plot  1 .75 .9 .6  .4 .47 /
\plot -3.5 .22 -3.9  .15  -4 0 /
\plot -1 -.375  -3.25  -.25  -4 0 /
\plot 1 -.75  .5  -.5 -1 -.375 / 
\setlinear
\plot .1 .45  -.2 .4 /
\plot -.8 .38  -1.1 .37 /
\plot -1.7 .36 -2.1 .35 /
\plot -2.6 .33  -3 .29 /
\endpicture.
\end{equation}
the eigenvalues of $\Phi(X^{\varepsilon_i})$ are related to the eigenvalues of the Casimir.  The
\emph{Schur functors} are the functors
\begin{equation}\label{Schurfunctor}
\begin{matrix}
F^\lambda_{V} \colon &\{ \hbox{$U$-modules}\} &\longrightarrow &\{ \hbox{$\tilde\cB_k$-modules} \} \\
&M &\longmapsto &\Hom_U(M(\lambda), M\otimes V^{\otimes k})
\end{matrix}
\end{equation}
where $\Hom_U(M(\lambda), M\otimes V^{\otimes k})$ is the vector space
of highest weight vectors of weight $\lambda$ in $M\otimes V^{\otimes k}$.

\subsection{The quantum group $U_h\fgl_n$}

Although the Lie algebra $\fgl_n$ is reductive, not semisimple, all of the general
setup of Sections 3.1 and 3.2 can be applied without change.
The \emph{simple roots} are 
$\alpha_i = \varepsilon_i-\varepsilon_{i+1}$,  $1\le i\le n-1$,
and
\begin{equation}
\rho = (n-1)\varepsilon_1+(n-2)\varepsilon_2+\cdots+\varepsilon_{n-1}.
\end{equation}
The \emph{dominant integral weights} of $\fgl_n$ are
$$
\lambda = \lambda_1\varepsilon_1+\cdots+\lambda_n\varepsilon_n, 
\qquad\hbox{where}\quad
\lambda_1\ge \lambda_2\ge \cdots\ge \lambda_n, \quad\hbox{and}\quad
\lambda_1,\ldots, \lambda_n\in \ZZ.
$$
and these index the simple finite dimensional $U_h\fgl_n$-modules $L(\lambda)$.
A \emph{partition with $\le n$ rows} is a dominant integral weight with $\lambda_n\ge 0$. 
If $\lambda_n<0$ and $\Delta$ denotes the 1-dimensional ``determinant'' representation
of $U_h\fgl_n$ then (see \cite[\S 15.5]{FH})
\begin{equation}\label{detrep}
L(\lambda) \cong \Delta^{\lambda_n}\otimes L(\lambda+(-\lambda_n,\ldots, -\lambda_n))
\qquad\hbox{with $\lambda + (-\lambda_n,\ldots, -\lambda_n)$ a partition.}
\end{equation}

Identify each partition $\lambda$ with the configuration of boxes which has 
$\lambda_i$ boxes in row $i$.  For example,
\begin{equation}\label{ptnex}
\lambda = 
\beginpicture
\setcoordinatesystem units <0.25cm,0.25cm>         
\setplotarea x from 0 to 4, y from -3 to 3    
\linethickness=0.5pt                          
\putrule from 0 3 to 5 3          %
\putrule from 0 2 to 5 2          
\putrule from 0 1 to 5 1          %
\putrule from 0 0 to 3 0          %
\putrule from 0 -1 to 3 -1          %
\putrule from 0 -2 to 1 -2          %
\putrule from 0 -3 to 1 -3          %
\putrule from 0 -3 to 0 3        %
\putrule from 1 -3 to 1 3        %
\putrule from 2 -1 to 2 3        %
\putrule from 3 -1 to 3 3        
\putrule from 4 1 to 4 3        %
\putrule from 5 1 to 5 3        %
\endpicture
= 
5\varepsilon_1+5\varepsilon_2
+3\varepsilon_3+3\varepsilon_4+\varepsilon_5+\varepsilon_6.
\end{equation}
If $\mu$ and $\lambda$ are partitions with $\mu\subseteq \lambda$ (as collections of boxes)
then the \emph{skew shape} $\lambda/\mu$ is the collection of boxes of $\lambda$ that are
not in $\mu$.  For example, if $\lambda$ is as in \eqref{ptnex} and
$$\mu = \beginpicture
\setcoordinatesystem units <0.25cm,0.25cm>         
\setplotarea x from 0 to 4, y from -3 to 3    
\linethickness=0.5pt                          
\putrule from 0 3 to 2 3          %
\putrule from 0 2 to 2 2          
\putrule from 0 1 to 2 1          %
\putrule from 0 0 to 2 0          %
\putrule from 0 -1 to 2 -1          %
\putrule from 0 -1 to 0 3        %
\putrule from 1 -1 to 1 3        %
\putrule from 2 -1 to 2 3        %
\endpicture
\qquad\hbox{then}\qquad
\lambda/\mu = 
\beginpicture
\setcoordinatesystem units <0.25cm,0.25cm>         
\setplotarea x from 0 to 4, y from -3 to 3    
\linethickness=0.5pt                          
\putrule from 2 3 to 5 3          %
\putrule from 2 2 to 5 2          
\putrule from 2 1 to 5 1          %
\putrule from 2 0 to 3 0          %
\putrule from 0 -1 to 1 -1          %
\putrule from 2 -1 to 3 -1          %
\putrule from 0 -2 to 1 -2          %
\putrule from 0 -3 to 1 -3          %
\putrule from 0 -3 to 0 -1        %
\putrule from 1 -3 to 1 -1        %
\putrule from 2 -1 to 2 3        %
\putrule from 3 -1 to 3 3        
\putrule from 4 1 to 4 3        %
\putrule from 5 1 to 5 3        %
\endpicture
.$$
If $b$ is a box in position $(i,j)$ of $\lambda$ 
the {\it content} of $b$ is
\begin{equation}\label{content}
c(b)=j-i = \hbox{the diagonal number of $b$,}
\qquad\hbox{so that}\qquad
\beginpicture
\setcoordinatesystem units <0.5cm,0.5cm>         
\setplotarea x from 0 to 4, y from -3 to 3    
\linethickness=0.5pt                          
\putrule from 0 3 to 5 3          %
\putrule from 0 2 to 5 2          
\putrule from 0 1 to 5 1          %
\putrule from 0 0 to 3 0          %
\putrule from 0 -1 to 3 -1          %
\putrule from 0 -2 to 1 -2          %
\putrule from 0 -3 to 1 -3          %
\putrule from 0 -3 to 0 3        %
\putrule from 1 -3 to 1 3        %
\putrule from 2 -1 to 2 3        %
\putrule from 3 -1 to 3 3        
\putrule from 4 1 to 4 3        %
\putrule from 5 1 to 5 3        %
\put{$\scriptstyle{0}$} at .5 2.5 
\put{$\scriptstyle{1}$} at 1.5 2.5 
\put{$\scriptstyle{2}$} at 2.5 2.5 
\put{$\scriptstyle{3}$} at 3.5 2.5 
\put{$\scriptstyle{4}$} at 4.5 2.5 
\put{$\scriptstyle{-1}$} at .5 1.5 
\put{$\scriptstyle{0}$} at 1.5 1.5 
\put{$\scriptstyle{1}$} at 2.5 1.5 
\put{$\scriptstyle{2}$} at 3.5 1.5 
\put{$\scriptstyle{3}$} at 4.5 1.5 
\put{$\scriptstyle{-2}$} at .5 .5 
\put{$\scriptstyle{-1}$} at 1.5 .5 
\put{$\scriptstyle{0}$} at 2.5 .5 
\put{$\scriptstyle{-3}$} at .5 -.5 
\put{$\scriptstyle{-2}$} at 1.5 -.5 
\put{$\scriptstyle{-1}$} at 2.5 -.5 
\put{$\scriptstyle{-4}$} at .5 -1.5 
\put{$\scriptstyle{-5}$} at .5 -2.5 
\endpicture
\end{equation}
are the contents of the boxes for the partition in \eqref{ptnex}.

If $\nu$ is a partition and
\begin{equation}\label{decomp}
V = L(\square)
\qquad\hbox{then}\qquad
L(\nu)  \otimes V  = \bigoplus_{\lambda\in \nu^+} L(\lambda),
\end{equation}
where the sum is over all partitions $\lambda$ with $\le n$ rows that are obtained by adding a box
to $\nu$  \cite[I App. A (8.4) and I (5.16)]{Mac},
Hence, the $U_h\fgl_n$-module
decompositions of  
\begin{equation}\label{gl-bimodule}
L(\mu)\otimes V^{\otimes k}
= \bigoplus_\lambda L(\lambda)\otimes \tilde H_k^{\lambda/\mu},
\qquad  k\in \ZZ_{\ge 0},
\end{equation}
are encoded by the graph $\hat H^{/\mu}$ with 
\begin{equation}
\begin{array}{ll}
\hbox{vertices on level $k$:}\  &\{\hbox{skew shapes $\lambda/\mu$ with $k$ boxes}\} \\
\hbox{edges:} & \hbox{$\lambda/\mu \longrightarrow \gamma/\mu$,
if $\gamma$ is obtained from $\lambda$ by adding a box} \\
\hbox{labels on edges:} &\hbox{content of the added box.}
\end{array}
\end{equation}
For example if $\mu = (3,3,3,2) = \beginpicture
\setcoordinatesystem units <0.13cm,0.13cm>   
\setplotarea x from 55 to 59.5, y from -3 to 3 
\plot 56 2.5     59 2.5    59 -0.5      58 -0.5      58 -1.5   56 -1.5    56 2.5 /
\endpicture$, then the first 4 rows of  $\hat H^{/\mu}$ are
\begin{equation}\label{HkBratteli}
 {\beginpicture
\setcoordinatesystem units <0.13cm,0.13cm>         
\setplotarea x from -8 to 100, y from 0 to 45
\linethickness=0.5pt   
\plot -8 0    -5 0      -5 -3      -6 -3    -6  -4   -8 -4   -8 0 /
\putrectangle corners at -5 0 and -4 -1
\putrectangle corners at -4 0 and -3 -1
\putrectangle corners at -3 0 and -2 -1

\plot 0 0       3 0      3 -3      2 -3      2 -4   0 -4    0 0 /
\putrectangle corners at 3 0 and 4 -1
\putrectangle corners at 4 0 and 5 -1
\putrectangle corners at 3 -1 and 4 -2

\plot 8 0     11 0    11 -3      10 -3      10 -4   8 -4    8 0 /
\putrectangle corners at 11 0  and 12 -1
\putrectangle corners at 11 -1  and 12 -2
\putrectangle corners at 11 -2 and 12 -3

\plot 16 0     19 0    19 -3      18 -3      18 -4   16 -4    16 0 /
\putrectangle corners at 19 0 and 20 -1
\putrectangle corners at 20 0 and 21 -1
\putrectangle corners at 18 -3 and 19 -4

\plot 24 0     27 0    27 -3      26 -3      26 -4   24 -4    24 0 /
\putrectangle corners at 27 0 and 28 -1
\putrectangle corners at 28 0 and 29 -1
\putrectangle corners at 24 -4 and 25 -5

\plot 32 0     35 0    35 -3      34 -3      34 -4   32 -4    32 0 /
\putrectangle corners at 35 0 and 36 -1
\putrectangle corners at 35 -1 and 36 -2
\putrectangle corners at 34 -3 and 35 -4

\plot 40 0     43 0    43 -3      42 -3      42 -4   40 -4    40 0 /
\putrectangle corners at 43 0 and 44 -1
\putrectangle corners at 43 -1 and 44 -2
\putrectangle corners at 40 -4 and 41 -5

\plot 48 0     51 0    51 -3      50 -3      50 -4   48 -4    48 0 /
\putrectangle corners at 51 0 and 52 -1
\putrectangle corners at 50 -3 and 51 -4
\putrectangle corners at 48 -4 and 49 -5

\plot 56 0     59 0    59 -3      58 -3      58 -4   56 -4    56 0 /
\putrectangle corners at 59 0 and 60 -1
\putrectangle corners at 57 -4 and 58 -5
\putrectangle corners at 56 -4 and 57 -5

\plot 64 0     67 0    67 -3      66 -3      66 -4   64 -4    64 0 /
\putrectangle corners at 67 0 and 68 -1
\putrectangle corners at 64 -4 and 65 -5
\putrectangle corners at 64 -5 and 65 -6

\plot 72 0     75 0    75 -3      74 -3      74 -4   72 -4    72 0 /
\putrectangle corners at 74 -3 and 75 -4
\putrectangle corners at 72 -4 and 73 -5
\putrectangle corners at 73 -4 and 74 -5

\plot 80 0     83 0    83 -3      82 -3      82 -4   80 -4    80 0 /
\putrectangle corners at 82 -3 and 83 -4
\putrectangle corners at 80 -4 and 81 -5
\putrectangle corners at 80 -5 and 81 -6

\plot 88 0     91 0    91 -3      90 -3      90 -4   88 -4    88 0 /
\putrectangle corners at 88 -4 and 89 -5
\putrectangle corners at 89 -4 and 90 -5
\putrectangle corners at 88 -5 and 89 -6

\plot 96 0     99 0    99 -3      98 -3      98 -4   96 -4    96 0 /
\putrectangle corners at 96 -4 and 97 -5
\putrectangle corners at 96 -5 and 97 -6
\putrectangle corners at 96 -6 and 97 -7

\plot 72 10 66 1 /
\plot 75 10 81 1 /
\plot 76 10 89 1 /
\plot 77 10 96 1 /

\plot 63 9 58 1 /
\plot 65 9 73 1 /
\plot 66 10 88 1 /

\plot 53 9 50 1 /
\plot 55 10 72 1 /
\plot 56 10 80 1 /

\plot 42 10 27 1 /
\plot 43 9 43 1 /
\plot 44 9 49 1 /
\plot 45 10 56 1 /
\plot 46 10 64 1 /

\plot 33 10 20 1 /
\plot 34 10 34 1 /
\plot 35 10 48 1 /

\plot 23 10 5 1 /
\plot 24 10 12 1 /
\plot 25 10 32 1 /
\plot 26 10 40 1 /

\plot  16 10 24 1 /
\plot  15 10 16 1 /
\plot  14 10 3 1 /
\plot  13 10 -2 1 /
\plot 73 15  76 15  76 12    75 12      75  11  73 11  73 15 /
\putrectangle corners at 73 11 and 74 10
\putrectangle corners at 73 10 and 74 9
\plot 63 15  66 15  66 12    65 12      65  11  63 11  63 15 /
\putrectangle corners at 63 11 and 64 10
\putrectangle corners at 64 11 and 65 10
\plot 53 15  56 15  56 12    55 12      55  11  53 11  53 15 /
\putrectangle corners at 55 12 and 56 11
\putrectangle corners at 53 11 and 54 10
\plot 43 15  46 15  46 12    45 12      45  11  43 11  43 15 /
\putrectangle corners at 46 15 and 47 14
\putrectangle corners at 43 11 and 44 10
\plot 33 15  36 15  36 12    35 12      35  11  33 11  33 15 /
\putrectangle corners at 36 15 and 37 14
\putrectangle corners at 35 12 and 36 11
\plot 23 15  26 15  26 12    25 12      25  11  23 11  23 15 /
\putrectangle corners at 26 15 and 27 14
\putrectangle corners at 26 14 and 27 13
\plot 13 15  16 15  16 12    15 12      15  11  13 11  13 15 /
\putrectangle corners at 16 15 and 17 14 
\putrectangle corners at 17 15 and 18 14 

\plot 16 17 32 25 /
\put{${}^4$} at 19 20
\plot 26 17 33 25 /
\put{${}^2$} at 27 20
\plot 34 17 34 25 /
\put{${}^{-1}$} at 32 20
\plot 43 17 35 25 /
\put{${}^3$} at 37 20
\plot 43 25 35 17 /
\put{${}^{-4}$} at 40 17
\plot 46 25 53 17 /
\put{${}^3$} at 46 18.5
\plot 53 24 46 17 /
\put{${}^{-4}$} at 50 17
\plot 54 24 54 17 /
\put{${}^{-1}$} at 56 19
\plot 55 24 63 17 /
\put{${}^{-3}$} at 59 17
\plot 56 24 74 17 /
\put{${}^{-5}$} at 68 17

\plot 53 30  56 30  56 27    55 27      55  26  53 26  53 30 /
\putrectangle corners at 53 26 and 54 25
\plot 43 30  46 30  46 27    45 27      45  26  43 26  43 30 /
\putrectangle corners at 45 27 and 46 26
\plot 33 30  36 30  36 27    35 27      35  26  33 26  33 30 /
\putrectangle corners at 36 30 and 37 29

\plot 43 40 35 31 /
\put{${}^3$} at 35 33
\plot 44 40 44 31 /
\put{${}^{-1}$} at 42 33
\plot 45 40 54 31 /
\put{${}^{-4}$} at 49 33
\plot 43 45  46 45  46 42    45 42      45  41  43 41  43 45 /
\linethickness=4pt
\plot 45 40 54 31 /
\plot 53 24 46 17 /
\plot 45 10 56 1 /

\endpicture}
\end{equation}

The following result is well known (see \cite{Ji} or \cite[(4.4)]{LR}).

\begin{prop}
If  $U = U_h\fgl_n$ and 
$V = L(\varepsilon_1)=L(
\beginpicture
\setcoordinatesystem units <0.13cm,0.13cm>   
\setplotarea x from 0 to 1, y from 0 to 1 
\putrectangle corners at 0 0 and 1 1
\endpicture)$
is the n-dimensional ``standard''
representation of $\fgl_n$ then the map $\Phi$ of \eqref{BraidRep} factors through the 
surjective homomorphism \eqref{AffineHeckeAlgebra} to give a representation of the affine Hecke algebra. 
\end{prop}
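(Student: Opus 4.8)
The plan is to show that, beyond the braid relations already encoded in the $\tilde\cB_k$-action of \eqref{BraidRep}, the operators $\Phi(T_i)=\check R_i$ satisfy the one extra relation $T_i^2=(q-q^{-1})T_i+1$ of \eqref{AffineHeckeAlgebra}; once this is checked, $\Phi$ automatically descends to the quotient $\tilde H_k$ of $\CC\tilde\cB_k$. Since $\check R_i$ acts as $\check R_{VV}$ in the $i,i+1$ tensor slots and as the identity elsewhere, it suffices to prove the single identity
$$
(\check R_{VV}-q\,\id)(\check R_{VV}+q^{-1}\id)=0 \qquad\hbox{on } V\otimes V,
$$
which is merely a rewriting of $\check R_{VV}^2=(q-q^{-1})\check R_{VV}+\id$.

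First I would decompose the tensor square. By \eqref{decomp} with $\nu=\square$ we have $V\otimes V=L(2\varepsilon_1)\oplus L(\varepsilon_1+\varepsilon_2)$, the symmetric and antisymmetric parts, and this decomposition is multiplicity free. As $\check R_{VV}$ is a $U_h\fgl_n$-module homomorphism, Schur's lemma forces it to act by a single scalar $c_\lambda$ on each summand $L(\lambda)$, so $\check R_{VV}$ is diagonalizable with eigenvalues $c_{2\varepsilon_1}$ and $c_{\varepsilon_1+\varepsilon_2}$.

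Next I would determine these scalars from the Casimir. Specializing \eqref{fulltwist} to $\mu=\nu=\varepsilon_1$ shows that $\check R_{VV}\check R_{VV}=\check R_{VV}^2$ acts on $L(\lambda)$ by $q^{\langle\lambda,\lambda+2\rho\rangle-2\langle\varepsilon_1,\varepsilon_1+2\rho\rangle}$. Using $\rho_i=n-i$ and $\langle\varepsilon_i,\varepsilon_j\rangle=\delta_{ij}$ one finds $\langle\varepsilon_1,\varepsilon_1+2\rho\rangle=2n-1$, $\langle 2\varepsilon_1,2\varepsilon_1+2\rho\rangle=4n$, and $\langle\varepsilon_1+\varepsilon_2,\varepsilon_1+\varepsilon_2+2\rho\rangle=4n-4$, so the exponents are $2$ and $-2$ and
$$
c_{2\varepsilon_1}^2=q^2, \qquad c_{\varepsilon_1+\varepsilon_2}^2=q^{-2}.
$$
Hence $c_{2\varepsilon_1}=\pm q$ and $c_{\varepsilon_1+\varepsilon_2}=\pm q^{-1}$.

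The main obstacle is fixing the two signs, since the Casimir only records the squares of the eigenvalues. I would settle this by the classical limit: as $h\to 0$ the universal ${\cal R}$-matrix tends to $1\otimes 1$, so $\check R_{VV}$ degenerates to the flip $m\otimes n\mapsto n\otimes m$, which acts by $+1$ on the symmetric square $L(2\varepsilon_1)$ and by $-1$ on the antisymmetric square $L(\varepsilon_1+\varepsilon_2)$. Each $c_\lambda$ is a Laurent monomial in $q$, hence continuous in $h$, so matching values at $q=1$ forces $c_{2\varepsilon_1}=q$ and $c_{\varepsilon_1+\varepsilon_2}=-q^{-1}$; alternatively this is read off from Jimbo's explicit $\check R_{VV}$ for the vector representation (\cite{Ji}, \cite[(4.4)]{LR}). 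With these eigenvalues the minimal polynomial of $\check R_{VV}$ is exactly $(x-q)(x+q^{-1})=x^2-(q-q^{-1})x-1$, which is the displayed quadratic relation; therefore $\Phi$ factors through \eqref{AffineHeckeAlgebra} to give a representation of $\tilde H_k$.
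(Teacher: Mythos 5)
Your proof is correct, but it takes a genuinely different route from the paper, which in fact offers no argument at all for this proposition: it states only that the result ``is well known'' and cites \cite{Ji} and \cite[(4.4)]{LR}, where the relation $(\check R_{VV}-q)(\check R_{VV}+q^{-1})=0$ is read off from the explicit matrix form of $\check R_{VV}$ on the standard basis of $V\otimes V$. You instead derive that relation from machinery the paper itself sets up: the multiplicity-free decomposition $V\otimes V=L(2\varepsilon_1)\oplus L(\varepsilon_1+\varepsilon_2)$ from \eqref{decomp}, Schur's lemma, and the full-twist formula \eqref{fulltwist} with $\mu=\nu=\varepsilon_1$, which pins down the squares of the two eigenvalues as $q^{2}$ and $q^{-2}$ (your inner-product computations $\langle\varepsilon_1,\varepsilon_1+2\rho\rangle=2n-1$, $\langle 2\varepsilon_1,2\varepsilon_1+2\rho\rangle=4n$, $\langle\varepsilon_1+\varepsilon_2,\varepsilon_1+\varepsilon_2+2\rho\rangle=4n-4$ all check out). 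The one delicate point is the sign ambiguity, which the Casimir cannot detect, and you resolve it legitimately: each eigenvalue is forced to equal $\pm q^{\pm 1}$, so specializing $h\to 0$, where $\check R_{VV}$ degenerates to the flip (eigenvalue $+1$ on the symmetric summand, $-1$ on the antisymmetric one), fixes $c_{2\varepsilon_1}=q$ and $c_{\varepsilon_1+\varepsilon_2}=-q^{-1}$, giving minimal polynomial $x^2-(q-q^{-1})x-1$ and hence the factorization of $\Phi$ through \eqref{AffineHeckeAlgebra}. Comparing the two routes: the paper's citation is shortest, while your argument is self-contained (modulo the quasitriangularity and Casimir facts the paper already quotes), needs no explicit $R$-matrix entries, and is exactly in the spirit of the paper's own proof of Proposition \ref{SchurWeylAffineTL}, where \eqref{casimir} is used the same way to identify the scalar $-q^{2m-1}$; it also isolates precisely where explicit information about ${\cal R}$ is unavoidable, namely in the single sign choice.
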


For a skew shape $\lambda/\mu$ with $k$ boxes
identify paths from $\mu$ to $\lambda/\mu$ in $\hat H^{/\mu}$
with \emph{standard tableaux}
of shape $\lambda/\mu$ by filling the boxes, successively, with $1,2,\ldots, k$ as they appear.
In the example graph $\hat H^{/\mu}$ above
$$\beginpicture
\setcoordinatesystem units <0.35cm,0.35cm>         
\setplotarea x from 55 to 60, y from -3 to 3
\linethickness=0.5pt   
\plot 56 2.5     59 2.5    59 -0.5      58 -0.5      58 -1.5   56 -1.5    56 2.5 /
\putrectangle corners at 59 2.5 and 60 1.5
\putrectangle corners at 57 -1.5 and 58 -2.5
\putrectangle corners at 56 -1.5 and 57 -2.5
\put{$\scriptstyle{1}$} at 59.5 2
\put{$\scriptstyle{2}$} at 56.5 -2
\put{$\scriptstyle{3}$} at 57.5 -2
\endpicture
\qquad\hbox{corresponds to the path} \quad
\beginpicture
\setcoordinatesystem units <0.2cm,0.2cm>   
\setplotarea x from 55 to 60, y from -3 to 3 
\plot 56 2.5     59 2.5    59 -0.5      58 -0.5      58 -1.5   56 -1.5    56 2.5 /
\endpicture
\longrightarrow
\beginpicture
\setcoordinatesystem units <0.2cm,0.2cm>   
\setplotarea x from 55 to 60, y from -3 to 3 
\plot 56 2.5     59 2.5    59 -0.5      58 -0.5      58 -1.5   56 -1.5    56 2.5 /
\putrectangle corners at 59 2.5 and 60 1.5
\endpicture
\longrightarrow
\beginpicture
\setcoordinatesystem units <0.2cm,0.2cm>   
\setplotarea x from 55 to 60, y from -3 to 3 
\plot 56 2.5     59 2.5    59 -0.5      58 -0.5      58 -1.5   56 -1.5    56 2.5 /
\putrectangle corners at 59 2.5 and 60 1.5
\putrectangle corners at 56 -1.5 and 57 -2.5
\endpicture
\longrightarrow
\beginpicture
\setcoordinatesystem units <0.2cm,0.2cm>   
\setplotarea x from 55 to 60, y from -3 to 3 
\plot 56 2.5     59 2.5    59 -0.5      58 -0.5      58 -1.5   56 -1.5    56 2.5 /
\putrectangle corners at 59 2.5 and 60 1.5
\putrectangle corners at 57 -1.5 and 58 -2.5
\putrectangle corners at 56 -1.5 and 57 -2.5
\endpicture
$$

\subsection{The quantum group $U_h\fgl_2$}

In the case when $n=2$, $U=U_h\fgl_2$ and the partitions which
appear in \eqref{gl-bimodule} and in the graph $\hat H^{/\mu}$ all have $\le 2$ rows.
For example if $\mu = (42) =
{\beginpicture   \setcoordinatesystem units <0.2cm,.2cm>       \setplotarea x from 0 to 4, y from -1 to 1   
\linethickness=0.5pt                      \plot 0 1.5    4 1.5    4 0.5    2 0.5    2 -.5    0 -.5  0 1.5  /
\endpicture}$
then the first few rows of $\hat H^{/\mu}$ are
\begin{equation}\label{gltwoBratteli}
{\beginpicture
\setcoordinatesystem units <0.2cm,.2cm>         
\setplotarea x from -8 to 27, y from -3  to 34   
\linethickness=0.5pt                          
\put{$k = 4:$} at  -6 0.0
\plot 0 1    4 1    4 0    2 0    2 -1    0 -1  0 1 /
\putrectangle corners at 4 0 and 5 1
\putrectangle corners at 2 -1 and 3 0
\putrectangle corners at 3 -1 and 4 0
\putrectangle corners at 4 -1 and 5 0
\plot 7 1    11 1    11 0    9 0    9 -1    7 -1  7 1 /
\putrectangle corners at 11 0 and 12 1
\putrectangle corners at 12 0 and 13 1
\putrectangle corners at 9 -1 and 10 0
\putrectangle corners at 10 -1 and 11 0
\plot 15 1    19 1    19 0    17 0    17 -1    15 -1  15 1 /
\putrectangle corners at 19 0 and 20 1
\putrectangle corners at 20 0 and 21 1
\putrectangle corners at 21 0 and 22 1
\putrectangle corners at 17 -1 and 18 0
\plot 24 1    28 1    28 0    26 0    26 -1    24 -1  24 1 /
\putrectangle corners at 28 0 and 29 1
\putrectangle corners at 29 0 and 30 1
\putrectangle corners at 30 0 and 31 1
\putrectangle corners at 31 0 and 32 1
\plot 2 2 6 6 /  \put{$\scriptstyle{3}$} at 4 3
\plot 10 2 6 6 /  \put{$\scriptstyle{5}$} at 8 5
\plot 10 2 14 6 / \put{$\scriptstyle{2}$} at 12  3
\plot 14 6 18 2 / \put{$\scriptstyle{6}$} at 16 5
\plot 18 2 22 6 / \put{$\scriptstyle{1}$} at 20 3
\plot  22 6  26 2  / \put{$\scriptstyle{7}$} at 24 5
\put{$k = 3:$} at  -6 8.0
\plot 4 9    8 9    8 8    6 8    6 7    4 7  4 9 /
\putrectangle corners at 8 8 and 9 9
\putrectangle corners at 6 7 and 7 8
\putrectangle corners at 7 7 and 8 8

\plot 12 9    16 9    16 8    14 8    14 7    12 7  12 9 /
\putrectangle corners at 16 8 and 17 9
\putrectangle corners at 17 8 and 18 9
\putrectangle corners at 14 7 and 15 8

\plot 21 9    25 9    25 8    23 8    23 7    21 7  21 9 /
\putrectangle corners at 25 8 and 26 9
\putrectangle corners at 26 8 and 27 9
\putrectangle corners at 27 8 and 28 9
\plot 6 10 2 14 /  \put{$\scriptstyle{4}$} at 4 13
\plot 6 10 10 14 /  \put{$\scriptstyle{2}$} at 8 11
\plot 10 14 14 10 /  \put{$\scriptstyle{5}$} at 12 13
\plot 14 10 18 14 /  \put{$\scriptstyle{1}$} at 16 11
\plot 18 14 22 10 /  \put{$\scriptstyle{6}$} at 20 13
\put{$k = 2:$} at  -6 16.0
\plot 1 17    5 17    5 16    3 16    3 15    1 15  1 17 /
\putrectangle corners at 3 15 and 4 16
\putrectangle corners at 4 15 and 5 16
\plot 9 17    13 17    13 16    11 16    11 15    9 15  9 17 /
\putrectangle corners at 13 16 and 14 17
\putrectangle corners at 11 15 and 12 16
\plot 17 17    21 17    21 16    19 16    19 15    17 15  17 17 /
\putrectangle corners at 21 16 and 22 17
\putrectangle corners at 22 16 and 23 17
\plot 2 18 6 22 /  \put{$\scriptstyle{2}$} at 4 19
\plot 6 22 10 18 /  \put{$\scriptstyle{4}$} at 8 21
\plot 10 18 14 22 /  \put{$\scriptstyle{1}$} at 12 19
\plot 14 22 18 18 / \put{$\scriptstyle{5}$} at 16 21
\put{$k = 1:$} at  -6 24.0
\plot 5 25    9 25    9 24    7 24    7 23    5 23  5 25 /
\putrectangle corners at 7 23 and 8 24
\plot 13 25    17 25    17 24    15 24    15 23    13 23  13 25 /
\putrectangle corners at 17 24 and 18 25
\plot 6 26 10 30 /   \put{$\scriptstyle{1}$} at 8 27
\plot 10 30 14 26  /   \put{$\scriptstyle{4}$} at 12 29
\put{$k = 0:$} at  -6 32.0
\plot 9 33    13 33    13 32    11 32    11 31    9 31  9 33 /
\endpicture}
\end{equation}
and this
is the graph which describes the decompositions in \eqref{gl-bimodule}.

\begin{prop}\label{SchurWeylAffineTL}
If  $U = U_h\fgl_2$, $M=L(\mu)$ where $\mu$ is a partition of $m$ with $\le 2$ rows, and 
$V = L(\varepsilon_1)=L(
\beginpicture
\setcoordinatesystem units <0.13cm,0.13cm>   
\setplotarea x from 0 to 1, y from 0 to 1 
\putrectangle corners at 0 0 and 1 1
\endpicture)$
is the 2-dimensional ``standard''
representation of $\fgl_2$ then the map $\Phi$ of \eqref{BraidRep} factors through the
surjective homomorphism of \eqref{AffineToAffineTL} with $\alpha^2=-q^{2m-1}$ to give a representation
of the affine Temperley-Lieb algebra $T_k^a$.
\end{prop}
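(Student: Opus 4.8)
The plan is to factor $\Phi$ through the chain $\tilde H_k \to \widehat{\TL}_k^a \to T_k^a$ of \eqref{AffineToAffineTL}, verifying the three successive requirements in order and reading off $\alpha$ from the last one. Since $V=L(\varepsilon_1)$ is the standard module, the preceding proposition (applied with $n=2$) already shows that $\Phi$ factors through the surjection $\CC\tilde\cB_k\to\tilde H_k$ of \eqref{AffineHeckeAlgebra}, so that $\Phi$ is a well-defined representation of $\tilde H_k$ with $\Phi(T_i)=\check R_i$ and $\Phi(X^{\varepsilon_1})=\check R_0^2$. This disposes of the first step.

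Next I would show that $\Phi$ annihilates the kernel element of \eqref{TLKernel}, so that it descends to $\widehat{\TL}_k^a$. Acting on three adjacent tensor slots, that element is, up to an invertible scalar, the quantum antisymmetrizer projecting $V^{\otimes 3}$ onto its three-row summand $L(\varepsilon_1+\varepsilon_2+\varepsilon_3)$; this is exactly the projection onto the three-row irreducible identified in the discussion following \eqref{TLKernel}. Because $\dim V=2$, the third quantum exterior power of $V$ vanishes, so no partition with more than two rows occurs in the decomposition \eqref{gl-bimodule} of $M\otimes V^{\otimes k}$. Hence the element of \eqref{TLKernel} acts as $0$ in $\End_U(M\otimes V^{\otimes k})$ and $\Phi$ factors through $\widehat{\TL}_k^a$.

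For the final descent to the diagram algebra $T_k^a$ I would set $\rho(e_i)=q-\check R_i$ and $\rho(\tau)=\alpha^{-1}\Phi(\tau)$, and check the defining relations of $T_k^a$. The relations $e_i^2=ne_i$, $e_ie_{i\pm1}e_i=e_i$ and $\tau e_i\tau^{-1}=e_{i+1}$ are either the Hecke/Temperley--Lieb relations already arranged above or are homogeneous of degree $0$ in $\tau$, so $\alpha$ cancels and they hold automatically. The one relation that pins down $\alpha$ is the special relation \eqref{extraTLrel}, $\tau^2e_{k-1}=e_1e_2\cdots e_{k-1}$; both of its sides are supported on the channel in which the capped pair of strands is the antisymmetric summand $\Delta=L(\varepsilon_1+\varepsilon_2)$ of the relevant $V\otimes V$. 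On that channel the scalar is computed from the quantum Casimir: by \eqref{casimir} and \eqref{fulltwist} the double braiding of $M=L(\mu)$ past $\Delta$ is $q^{\langle\lambda,\lambda+2\rho\rangle-\langle\mu,\mu+2\rho\rangle-\langle\varepsilon_1+\varepsilon_2,\,\varepsilon_1+\varepsilon_2+2\rho\rangle}=q^{2m}$ with $\lambda=\mu+(1,1)$ and $m=|\mu|$, while the cap itself carries the antisymmetric framing eigenvalue $-q^{-1}$ of $\check R_{VV}$. Combining the double-braiding scalar $q^{2m}$ with the framing factor $-q^{-1}$ produces the asserted value $\alpha^2=-q^{2m-1}$.

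I expect this last computation to be the main obstacle: reconciling the framing of the diagrammatic generator $\tau$ with the braiding operator $\Phi(\tau)$. The delicate point is that, for $\fgl_2$, the Temperley--Lieb cap--cup factors through the determinant module $\Delta$ rather than through the trivial module, so wrapping a capped pair of strands around the flagpole $M$ contributes \emph{both} the Casimir double-braiding scalar and a half-twist contribution from the antisymmetric channel; keeping the sign and the odd power of $q$ straight is where the care is needed. By contrast, the Hecke factorization of the first step and the vanishing of the third quantum exterior power in the second step are routine.
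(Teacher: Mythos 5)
Your proposal is correct and takes essentially the same route as the paper's proof: the paper also reduces the whole question to the special relation \eqref{extraTLrel}, uses that $e_1$ acts as $(q+q^{-1})$ times the projection onto the determinant summand $L(\varepsilon_1+\varepsilon_2)$ of $V^{\otimes 2}$, and obtains $\alpha^2$ by combining the Casimir/full-twist scalar $q^{2m}$ for the double braiding of $L(\mu)$ with $L(\varepsilon_1+\varepsilon_2)$ together with the eigenvalue $-q^{-1}$ coming from $e_1T_1=-q^{-1}e_1$ --- precisely your two ingredients, assembled in the paper as the computation $\Phi_2(e_1X^{\varepsilon_1}T_1X^{\varepsilon_1})=-q^{2m-1}\,\Phi_2(e_1)$. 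The only difference is presentational: you spell out the Hecke factorization and the vanishing of the quantum antisymmetrizer for $\dim V=2$, steps the paper delegates to the citation of \cite[Thm.~6.1(c)]{OR}, and you phrase the final scalar computation directly on the relation $\tau^2e_{k-1}=e_1\cdots e_{k-1}$ rather than on the equivalent element $e_1X^{\varepsilon_1}T_1X^{\varepsilon_1}$.
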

\begin{proof}
The proof that the kernel of $\Phi$ contains the element \eqref{extraTLrel}
is exactly as in the proof of \cite[Thm. 6.1(c)]{OR}:
The element $e_1$ in $T_2^a$ acts on $V^{\otimes 2}$ as 
$(q+q^{-1})\cdot {\rm pr}$ where ${\rm pr}$ is the unique $U_h\fg$-invariant 
projection onto 
$L( 
\beginpicture
\setcoordinatesystem units <0.13cm,0.13cm>   
\setplotarea x from 0 to 1, y from 0 to 2 
\putrectangle corners at 0 0 and 1 1
\putrectangle corners at 0 1 and 1 2
\endpicture)$
in $V^{\otimes 2}$.  Using $e_1T_1 = -q^{-1}e_1$ and the pictorial equalities
$$
\beginpicture
\setcoordinatesystem units <.5cm,.5cm>         
\setplotarea x from -4 to 1.5, y from -2 to 4    
\plot -1.5 3.7 -1.5 1.12 /
\plot -1.5 0.88 -1.5 -0.88 /
\plot -1.5 -1.12 -1.5 -1.75 /
\plot -1.25 3.7 -1.25 1.12 /
\plot -1.25 0.88 -1.25 -0.88 /
\plot -1.25 -1.12 -1.25 -1.75 /
\ellipticalarc axes ratio 1:1 360 degrees from -1.5 3.7 center 
at -1.375 3.7
\put{$*$} at -1.375 3.7  
\ellipticalarc axes ratio 1:1 180 degrees from -1.5 -1.75 center 
at -1.375 -1.75 
\plot  1 2   1 0.5 /
\plot  1 -0.5   1 -1.5 /
\setlinear
\plot -0.3 1.5  -1.1 1.5 /
\ellipticalarc axes ratio 2:1 180 degrees from -1.65 1.5  center 
at -1.65 1.25 
\plot -1.65 1  -0.3 1 /
\plot -0.3 -0.5  -1.1 -0.5 /
\ellipticalarc axes ratio 2:1 180 degrees from -1.65 -.5  center 
at -1.65 -.75 
\plot -1.65 -1  -0.3 -1 /
\setquadratic
\plot  -0.3 1  -0.05 0.8  -0 0.5 /
\plot   0 0.5   0.15 0.2   0.7 0 /
\plot   1 -0.5   0.9 -0.15   0.7 0 /
\plot  -0.3 1.5  -0.05 1.7  -0 2 /
\plot   -0.3 -0.5     -.1 -.425  -0.05 -0.325 /
\plot   -0.05 -0.325   0.15 -0.1   0.35 -0.05 /
\plot   0.65 0.15   0.9 0.25   1 0.5 /
\plot  -0.3 -1  -0.05 -1.2  -0 -1.5 /
\ellipticalarc axes ratio 1:1 180 degrees from 1 2 center 
at 0.5 2
\ellipticalarc axes ratio 1:1 180 degrees from 0 3.5 center 
at 0.5 3.5
\endpicture
=
\beginpicture
\setcoordinatesystem units <.5cm,.5cm>         
\setplotarea x from -2.5 to 2.5, y from -2 to 4    
\plot -1.5 3.7 -1.5 1 /
\plot -1.25 3.7 -1.25 1 /
\ellipticalarc axes ratio 1:1 360 degrees from -1.5 3.7 center 
at -1.375 3.7
\put{$*$} at -1.375 3.7  
\ellipticalarc axes ratio 1:1 180 degrees from -1.5 -1.75 center 
at -1.375 -1.75 
\setlinear
\setquadratic
\plot   0 2   0.15 1.7   0.7 1.5 /
\plot   1 1   0.9 1.35   0.7 1.5 /
\plot   0 1   0.1 1.3   0.35 1.45 /
\plot   0.65 1.65   0.9 1.75   1 2 /
\plot   -1.5 1   -1.15 0.55   0.215 0.295 /
\plot   1.65 -0.2    1.25 0.2   0.215 0.295 /
\plot   -1.25 1   -0.9 0.7   0.205 0.515 /
\plot    1.9 -0.2    1.5 0.35   0.205 0.515 /
\plot   0.65 0.65   0.9 0.75   1 1 /
\plot   -0.35 0.65  -0.1 0.75   0 1 /
\plot   -1 -0.2   -0.9 0.1   -0.65 0.25 /
\plot   -0 -0.2    0.1 0.1    0.25 0.2 /
\plot   0 -0.2   0.1 -0.6   0.5 -0.95 /
\plot   1 -1.7   0.9 -1.3   0.5 -0.95 /
\plot   -1 -0.2   -0.9 -0.6   -0.5 -0.95 /
\plot   0 -1.7   -0.1 -1.3   -0.5 -0.95 /
\plot  1.9 -0.2    1.65 -0.8    0.8 -1 /
\plot  -1.25 -1.75  -1.05 -1.35  -0.45 -1.1 /   
\plot  -0.35 -0.9    -0.15 -0.875   0.2  -0.85 /
\plot  -0.2 -1.05    0.05 -1.05   0.4  -1.05 /
\plot  1.65 -0.2    1.45 -0.7   0.55  -0.85 /
\plot  -1.5 -1.75  -1.35 -1.35  -0.7 -1 /   
\ellipticalarc axes ratio 1:1 180 degrees from 1 2 center 
at 0.5 2
\ellipticalarc axes ratio 1:1 180 degrees from 0 3.5 center 
at 0.5 3.5
\endpicture
= ~-q^{-1}\cdot
\beginpicture
\setcoordinatesystem units <.5cm,.5cm>         
\setplotarea x from -2 to 1.5, y from -2 to 4    
\plot -1.5 3.7 -1.5 1 /
\plot -1.25 3.7 -1.25 1 /
\ellipticalarc axes ratio 1:1 360 degrees from -1.5 3.7 center 
at -1.375 3.7
\put{$*$} at -1.375 3.7  
\ellipticalarc axes ratio 1:1 180 degrees from -1.5 -1.75 center 
at -1.375 -1.75 
\plot  0 2   0 1 /
\plot  1 2   1 1 /
\setlinear
\setquadratic
\plot   -1.5 1   -1.15 0.55   0.215 0.295 /
\plot   1.65 -0.2    1.25 0.2   0.215 0.295 /
\plot   -1.25 1   -0.9 0.7   0.205 0.515 /
\plot    1.9 -0.2    1.5 0.35   0.205 0.515 /
\plot   0.65 0.65   0.9 0.75   1 1 /
\plot   -0.35 0.65  -0.1 0.75   0 1 /
\plot   -1 -0.2   -0.9 0.1   -0.65 0.25 /
\plot   -0 -0.2    0.1 0.1    0.25 0.2 /
\plot   0 -0.2   0.1 -0.6   0.5 -0.95 /
\plot   1 -1.7   0.9 -1.3   0.5 -0.95 /
\plot   -1 -0.2   -0.9 -0.6   -0.5 -0.95 /
\plot   0 -1.7   -0.1 -1.3   -0.5 -0.95 /
\plot  1.9 -0.2    1.65 -0.8    0.8 -1 /
\plot  -1.25 -1.75  -1.05 -1.35  -0.45 -1.1 /   
\plot  -0.35 -0.9    -0.15 -0.875   0.2  -0.85 /
\plot  -0.2 -1.05    0.05 -1.05   0.4  -1.05 /
\plot  1.65 -0.2    1.45 -0.7   0.55  -0.85 /
\plot  -1.5 -1.75  -1.35 -1.35  -0.7 -1 /   
\ellipticalarc axes ratio 1:1 180 degrees from 1 2 center 
at 0.5 2
\ellipticalarc axes ratio 1:1 180 degrees from 0 3.5 center 
at 0.5 3.5
\endpicture
$$
it follows that $\Phi_2(e_1X^{\varepsilon_1}T_1X^{\varepsilon_1})$
acts as $-(q+q^{-1})q^{-1}\cdot \check R_{L(
\beginpicture
\setcoordinatesystem units <0.13cm,0.13cm>   
\setplotarea x from 0 to 1, y from 0 to 2 
\putrectangle corners at 0 0 and 1 1
\putrectangle corners at 0 1 and 1 2
\endpicture), L(\mu)}\check R_{L(\mu),L(
\beginpicture
\setcoordinatesystem units <0.13cm,0.13cm>   
\setplotarea x from 0 to 1, y from 0 to 2 
\putrectangle corners at 0 0 and 1 1
\putrectangle corners at 0 1 and 1 2
\endpicture)}(\id_{L(\mu)}\otimes {\rm pr})$.
By \eqref{casimir}, this is equal to
\begin{align*}
-q^{-1}(C_{L(\mu)}\otimes C_{L(
\beginpicture
\setcoordinatesystem units <0.13cm,0.13cm>   
\setplotarea x from 0 to 1, y from 0 to 2 
\putrectangle corners at 0 0 and 1 1
\putrectangle corners at 0 1 and 1 2
\endpicture)})&C_{L(\mu)\otimes L(
\beginpicture
\setcoordinatesystem units <0.13cm,0.13cm>   
\setplotarea x from 0 to 1, y from 0 to 2 
\putrectangle corners at 0 0 and 1 1
\putrectangle corners at 0 1 and 1 2
\endpicture)}^{-1}\Phi_2(\id_{L(\mu)}\otimes e_1) \\
&=-q^{-1}q^{-\langle \mu,\mu+2\rho\rangle}
q^{-\langle \varepsilon_1+\varepsilon_2,\varepsilon_1+\varepsilon_2+2\rho\rangle}
C_{L(\mu+\varepsilon_1+\varepsilon_2)}^{-1}
\Phi_2(\id_{L(\mu)}\otimes e_1).
\end{align*}
and the coefficient 
$-q^{-1}q^{-\langle \mu,\mu+2\rho\rangle}
q^{-\langle \varepsilon_1+\varepsilon_2,\varepsilon_1+\varepsilon_2+2\rho\rangle}
C_{L(\mu+\varepsilon_1+\varepsilon_2)}^{-1}$
simplifies to
$$-q^{-1}q^{-\langle \mu,\mu+2\rho\rangle}
q^{-\langle \varepsilon_1+\varepsilon_2,\varepsilon_1+\varepsilon_2+2\rho\rangle}
q^{\langle \mu+\varepsilon_1+\varepsilon_2,\mu+\varepsilon_1+\varepsilon_2+2\rho\rangle}
=-q^{-1}q^{2(\mu_1+\mu_2)} = -q^{2m-1},
$$
where $m = \mu_1+\mu_2 = |\mu|$.
\end{proof}

\subsection{The quantum group $U_h\fsl_2$}

The restriction of an irreducible representation $L(\lambda)$ of $U_h\fgl_n$ to $U_h\fsl_n$ 
is irreducible and all irreducible representations of $U_h\fsl_n$ are obtained in this fashion.
Since the ``determinant'' representation is trivial as an $U_h\fsl_n$ module
it follows from \eqref{detrep}
that the irreducible representations $L_{\fsl_n}(\lambda)$ of $U_h\fsl_n$ are indexed by partitions 
$\lambda = (\lambda_1,\ldots, \lambda_n)$ with $\lambda_n=0$.  
Hence, the graph which describes the $U_h\fsl_2$-module decompositions of  
\begin{equation}\label{sl-bimodule}
L(\mu)\otimes V^{\otimes k}
= \bigoplus_\lambda L(\lambda)\otimes \tilde T_k^{\lambda/\mu},
\qquad  k\in \ZZ_{\ge 0}.
\end{equation}
is exactly the same as the graph for $U_h\fgl_2$ except with all columns of length $2$ removed
from the partitions.  
More precisely, the decompositions
are encoded by the graph $\hat T^{/\mu}$ with 
\begin{equation}
\begin{array}{ll}
\hbox{vertices on level $k$:}\  &\{\mu_1-\mu_2+k, \mu_1-\mu_2+k-2, \ldots, \mu_1-\mu_2-k\}
\cap \ZZ_{\ge 0} \\
\hbox{edges:} & \ell \longrightarrow \ell\pm 1.
 \\
\end{array}
\end{equation}
For example if $m=7$ and $\mu_1-\mu_2 = 3$ then the first few rows of $\hat T^{/\mu}$ are
\begin{equation}\label{TkBratteli}
{\beginpicture
\setcoordinatesystem units <0.2cm,.2cm>         
\setplotarea x from -8 to 27, y from -3  to 34   
\linethickness=0.5pt                          
\put{$k = 4:$} at  -6 0.0
\put{$\emptyset$} at 2 0
\putrectangle corners at 9 -0.5 and 10 0.5
\putrectangle corners at 10 -0.5 and 11 0.5
\putrectangle corners at 16 -0.5 and 17 0.5
\putrectangle corners at 17 -0.5 and 18 0.5
\putrectangle corners at 18 -0.5 and 19 0.5
\putrectangle corners at 19 -0.5 and 20 0.5

\putrectangle corners at 24 -0.5 and 26 0.5
\putrectangle corners at 26 -0.5 and 27 0.5
\putrectangle corners at 27 -0.5 and 28 0.5
\putrectangle corners at 28 -0.5 and 29 0.5
\putrectangle corners at 29 -0.5 and 30 0.5

\plot 2 2 6 6 / 
\plot 10 2 6 6 /  
\plot 10 2 14 6 / 
\plot 14 6 18 2 / 
\plot 18 2 22 6 / 
\plot  22 6  26 2  / 
\put{$k = 3:$} at  -6 8.0
\putrectangle corners at 5.5 7.5 and 6.5 8.5

\putrectangle corners at 12.5 7.5 and 13.5 8.5 
\putrectangle corners at 13.5 7.5 and 14.5 8.5 
\putrectangle corners at 14.5 7.5 and 15.5 8.5 

\putrectangle corners at 21 7.5 and 23 8.5
\putrectangle corners at 23 7.5 and 24 8.5
\putrectangle corners at 24 7.5 and 25 8.5
\putrectangle corners at 25 7.5 and 26 8.5

\plot 6 10 2 14 /  
\plot 6 10 10 14 /  
\plot 10 14 14 10 / 
\plot 14 10 18 14 /  
\plot 18 14 22 10 /  
\put{$k = 2:$} at  -6 16.0
\put{$\emptyset$} at 2 16.0
\putrectangle corners at 9 15.5 and 10 16.5
\putrectangle corners at 10 15.5 and 11 16.5
\putrectangle corners at 17 15.5 and 19 16.5
\putrectangle corners at 19 15.5 and 20 16.5
\putrectangle corners at 20 15.5 and 21 16.5

\plot 2 18 6 22 /  
\plot 6 22 10 18 /  
\plot 10 18 14 22 /  
\plot 14 22 18 18 / 
\put{$k = 1:$} at  -6 24.0
\putrectangle corners at 5.5 23.5 and 6.5 24.5 
\putrectangle corners at 13 23.5 and 15 24.5
\putrectangle corners at 15 23.5 and 16 24.5
\plot 6 26 10 30 /  
\plot 10 30 14 26  /   
\put{$k = 0:$} at  -6 32.0
\putrectangle corners at 9 31 and 11 32
\endpicture}
\end{equation}
Paths in \eqref{TkBratteli}  correspond to paths in \eqref{gltwoBratteli}
which correspond to standard tableaux $T$ of shape $\lambda/\mu$.

\end{section}

\begin{section}{Eigenvalues}

\subsection{Eigenvalues of the $X^{\varepsilon_i}$ in the affine Hecke algebra}

Recall, from \eqref{AffineHeckeAlgebra}, that the \emph{affine Hecke algebra} $\tilde H_k$ 
is the quotient of the group algebra of the affine braid group $\CC \tilde B_k$ by the relations
\begin{equation} 
T_{i}^{2}=(q-q^{-1})T_{i}+1.
\end{equation}
As observed in Proposition \ref{SchurWeylAffineTL} the map $\Phi$ in \eqref{BraidRep} makes
the module $L(\mu) \otimes V^{\otimes k}$ in \eqref{gl-bimodule} into an $\tilde H_k$ module.
Thus the vector spaces $\tilde H_k^{\lambda/\mu}$ in \eqref{gl-bimodule} are
the $\tilde H_k$-modules given by
$$\tilde H_k^{\lambda/\mu} = F^\lambda_{V}(L(\mu)),
\quad\hbox{where $F^\lambda_{V}$ are the Schur functors of \eqref{Schurfunctor}.}
$$
The following theorem is well known (see, for example, \cite{Ch}).

\begin{thm} \label{Thm:AffineMurphy} 
\begin{enumerate}
\item[(a)]  The $X^{\varepsilon_i}, 1 \le i \le k,$ mutually commute in the affine Hecke algebra $\tilde H_k$.
\item[(b)]  The eigenvalues of  $X^{\varepsilon_i}$ are given by the graph $\hat H^{/\mu}$ of 
\eqref{HkBratteli}
in the sense that if
\begin{align*}
\hat H^{/\mu}_k &= \{ \hbox{skew shapes $\lambda/\mu$ with $k$ boxes}\}
\qquad\hbox{and} \\
\hat H_k^{\lambda/\mu} &=
\{ \hbox{standard tableaux $T$ of shape $\lambda/\mu$}\}
\end{align*}
for $\lambda/\mu\in \hat H_k^{/\mu}$,
then
$$
\hbox{$\hat H_k^{/\mu}$ is an index set for the simple $\tilde H_k$ modules 
$\tilde H_k^{\lambda/\mu}$ appearing in $L(\mu)\otimes V^{\otimes k}$,}
$$
and
$$\tilde H_k^{\lambda/\mu}
\qquad\hbox{has a basis}\qquad \{v_T\ |\ T\in \hat H_k^{\lambda/\mu}\}
\qquad\hbox{with}\qquad
X^{\varepsilon_i} v_T = q^{2c(T(i))} v_T,$$ 
where $c(T(i))$ is the content of box $i$ of $T$.
\item[(c)] $\kappa
=X^{\varepsilon_1}\cdots X^{\varepsilon_k}$ is a central element of $\tilde H_k$
and 
$$\hbox{
$\kappa$ acts on 
$\tilde H_k^{\lambda/\mu}$ by the constant}\qquad
q^{2\sum_{b\in \lambda/\mu} c(b)}.
$$
\end{enumerate}
\end{thm}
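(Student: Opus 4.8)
The plan is to prove (a), (b), (c) in turn, using the Schur--Weyl module $L(\mu)\otimes V^{\otimes k}$ together with the Casimir computation \eqref{casimir}--\eqref{fulltwist}. Part (a) is essentially free: the elements $X^{\varepsilon_i}$ already commute in the affine braid group $\tilde\cB_k$ by \eqref{XiCommuteInAffine}, and since $\tilde H_k$ is a quotient of $\CC\tilde\cB_k$, their images commute in $\tilde H_k$.

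For (b) I would first fix the chain interpretation of standard tableaux. A path from $\mu$ to $\lambda$ in $\hat H^{/\mu}$ is a chain $\mu=\lambda^{(0)}\subset\lambda^{(1)}\subset\cdots\subset\lambda^{(k)}=\lambda$ in which $\lambda^{(i)}$ is obtained from $\lambda^{(i-1)}$ by adding a single box $b_i$, which is exactly what \eqref{decomp} allows, and $T$ records $i$ in $b_i$. Iterating \eqref{decomp} produces a one-dimensional space of highest weight vectors $v_T\in\Hom_U(L(\lambda),L(\mu)\otimes V^{\otimes k})$ for each such chain, and these $v_T$ form a basis of $\tilde H_k^{\lambda/\mu}=F^\lambda_V(L(\mu))$. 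The goal is then to show each $v_T$ is a simultaneous eigenvector of the $X^{\varepsilon_i}$ with the stated eigenvalues.

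Next I would invoke \eqref{XiasRmatrix}: $\Phi(X^{\varepsilon_i})$ is the full twist $\check R_{M\otimes V^{\otimes(i-1)},V}\check R_{V,M\otimes V^{\otimes(i-1)}}$ on the first $i$ tensor legs and the identity on the remaining legs. Writing $A=M\otimes V^{\otimes(i-1)}$ and $B=V$, relation \eqref{casimir} gives $\check R_{AB}\check R_{BA}=(C_A\otimes C_B)C_{A\otimes B}^{-1}$, which is central for the $U$-action and so acts by a scalar on each isotypic component. Read off after its first $i$ legs, $v_T$ lies in the copy of $L(\lambda^{(i)})$ sitting inside $L(\lambda^{(i-1)})\otimes V$, and on this piece $C_A$, $C_B$, $C_{A\otimes B}$ act by the scalars of \eqref{casimir} attached to $\lambda^{(i-1)}$, $\varepsilon_1$, $\lambda^{(i)}$. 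Hence $X^{\varepsilon_i}v_T=q^{N_i}v_T$ with $N_i=\langle\lambda^{(i)},\lambda^{(i)}+2\rho\rangle-\langle\lambda^{(i-1)},\lambda^{(i-1)}+2\rho\rangle-\langle\varepsilon_1,\varepsilon_1+2\rho\rangle$. Writing $\lambda^{(i)}=\lambda^{(i-1)}+\varepsilon_j$ for the row $j$ of $b_i$ and using $\rho_j=n-j$, a direct expansion gives $N_i=2(\lambda^{(i-1)}_j+1-j)=2c(b_i)=2c(T(i))$, which is (b). For (c), centrality is cleanest from the braid group: since the $X^{\varepsilon_i}$ commute, $\kappa^{-1}=X^{-\varepsilon_1}\cdots X^{-\varepsilon_k}=\tau^k$, central in $\tilde\cB_k$ by Remark \ref{SLPGL}, so $\kappa$ is central in $\tilde H_k$; the scalar is then immediate from (b), namely $\kappa v_T=\big(\prod_{i=1}^k q^{2c(T(i))}\big)v_T=q^{2\sum_{b\in\lambda/\mu}c(b)}v_T$, since the boxes filled by $T$ are exactly those of $\lambda/\mu$ and the total content is independent of the filling order.

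I expect the main obstacle to be the representation-theoretic bookkeeping in (b): verifying that $v_T$ is genuinely a simultaneous eigenvector, i.e.\ that the full twist of the $i$-th strand acts on $v_T$ through the single local branching $L(\lambda^{(i-1)})\otimes V\supset L(\lambda^{(i)})$ and is diagonal in the path basis, independent of the later legs. This rests on the centrality of $\check R_{AB}\check R_{BA}$ for the $U$-action and on the multiplicity-freeness in \eqref{decomp}, which guarantees that each level-$i$ component is a well-defined isotypic piece; by contrast the content identity $N_i=2c(b_i)$ is a routine weight computation.
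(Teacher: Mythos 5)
Your proposal is correct and follows essentially the same route as the paper's proof: part (a) from commutativity in $\tilde\cB_k$ via \eqref{XiCommuteInAffine}; part (b) by iterating the branching rule \eqref{decomp} to produce the path basis $\{v_T\}$ and then computing the action of the full twist $\Phi(X^{\varepsilon_i})$ through the Casimir scalars of \eqref{casimir}, \eqref{fulltwist} and \eqref{XiasRmatrix}, with the identical content computation $N_i=2c(T(i))$; and part (c) by centrality of the full twist together with the eigenvalue formula. The only claim you leave unaddressed is that the modules $\tilde H_k^{\lambda/\mu}$ are \emph{simple} (and pairwise non-isomorphic), which the paper itself does not argue directly but delegates to \cite[Thm. 4.1]{R}; you should either cite that result or note that the joint spectrum of the $X^{\varepsilon_i}$ separates the standard tableaux generically, which is what that argument provides.
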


\begin{proof} (a) is a restatement of \eqref{XiCommuteInAffine}.
(b) 
Since the $\tilde H_k$ action and the $U_h\fgl_n$ action commute on 
$L(\mu)\otimes V^{\otimes k}$ it follows that the decomposition in 
\eqref{gl-bimodule} is a decomposition as $(U_h\fgl_n, \tilde H_k)$ bimodules,
where the $\tilde H_k^{\lambda/\mu}$ are some $\tilde H_k$-modules.
Comparing the $L(\lambda)$ components on each side of
\begin{align*}
\bigoplus_\lambda L(\lambda)\otimes \tilde H_\ell^{\lambda/\mu}
&\cong L(\mu)\otimes V^{\otimes \ell} = L(\mu)\otimes V^{\otimes(\ell-1)}\otimes V 
\cong\big(\bigoplus_\nu L(\nu)\otimes\tilde  H_{\ell-1}^{\nu/\mu}\big)\otimes V \\
&\cong \bigoplus_\lambda \bigoplus_{\lambda/\nu=\square} 
L(\nu) \otimes \tilde H_{\ell-1}^{\nu/\mu}  
\cong \bigoplus_\lambda 
\Big(L(\lambda) \otimes 
\big(\bigoplus_{\lambda/\nu=\square} \tilde H_{\ell-1}^{\nu/\mu}\big)\Big)
\end{align*}
gives
\begin{equation}\label{Hdecomp}
\tilde H_\ell^{\lambda/\mu}\cong \bigoplus_{\lambda/\nu=\square} \tilde H_{\ell-1}^{\nu/\mu},
\end{equation}
for any $\ell\in \ZZ_{\ge 0}$ and skew shape $\lambda/\mu$ with $\ell$ boxes.
Iterate \eqref{Hdecomp} (with $\ell = k, k-1, \ldots$) to produce a decomposition
$$\tilde H_k^{\lambda/\mu} = \bigoplus_{T\in \hat H_k^{\lambda/\mu}} \tilde H_1^T,$$
where the summands $\tilde H_1^T$ are 1-dimensional vector spaces.  This determines
a basis (unique up to multiplication of the basis vectors by constants) 
$\{v_T\ |\ T\in \hat H_k^{\lambda/\mu}\}$  of $\tilde H_k^{\lambda/\mu}$ which respects the 
decompositions in \eqref{Hdecomp} for $1\le \ell \le k$.

Combining \eqref{casimir}, \eqref{fulltwist} and \eqref{XiasRmatrix} gives that 
$X^{\varepsilon_i}$ acts on the $L(\lambda)$ component of the decomposition
\eqref{decomp}
by the constant
$$
q^{\langle \lambda, \lambda + 2 \rho \rangle - \langle \nu, \nu + 2 \rho \rangle  
- \langle \varepsilon_1, \varepsilon_1 + 2 \rho \rangle} = q^{2c(\lambda/\nu)}
$$
since if $\lambda = \nu+\varepsilon_j$, so that $\lambda$ is the same as $\nu$ except with an
additional box in row $j$, then
$\nu\subseteq\lambda$,  $\lambda/\nu = \square$ and
\begin{align*}
\langle\lambda,\lambda+2\rho\rangle&-\langle\nu,\nu+2\rho\rangle
-\langle \varepsilon_1, \varepsilon_1+2\rho\rangle \\
&=\langle \nu+\varepsilon_j,\nu+\varepsilon_j+2\rho\rangle
-\langle\nu,\nu+2\rho\rangle -(1+2(n-1)) \\
&=2\nu_j+\langle \varepsilon_j, \varepsilon_j+2\rho\rangle - 2n+1 
=2\nu_j+(1+2(n-j)) - 2n+1 \\
&=2(\nu_j+1)-2j 
= 2c(\lambda/\nu).
\end{align*}
Hence,
$$X^{\varepsilon_i}v_T = q^{2c(T(i))}v_T,
\qquad\hbox{for $1\le i\le k$,}
$$
where $T(i)$ is the box containing $i$ in $T$.

The remainder of the proof, including the simplicity of the $\tilde H_k$-modules
$\tilde H_k^{\lambda/\mu}$, is accomplished as in \cite[Thm. 4.1]{R}.

(c) The element $X^{\varepsilon_1}\cdots X^{\varepsilon_k}$ is central in $\tilde B_k$
(it is a full twist) and hence its image is central in $\tilde H_k$.  The constant describing
its action on $\tilde H_k^{\lambda/\mu}$ follows from the formula 
$X^{\varepsilon_i}v_T = q^{2c(T(i))}v_T$.
\end{proof}

\subsection{Eigenvalues of the $m_i$ in $T_k^a$}

Let $m_1, m_2, \ldots, m_k$ be the commuting family in the affine Temperley-Lieb algebra
as defined in \eqref{midef}.  We will use the results of Theorem \ref{Thm:AffineMurphy} 
to determine the eigenvalues of the $m_i$ in the (generically) irreducible representations.

\begin{thm}  
\begin{enumerate}
\item[(a)]  The elements $m_i,1 \le i \le k$, mutually commute in $T_k^a$.
\item[(b)]  The eigenvalues of the elements $m_i$ are given by the graph $\hat T^{/\mu}$ of
\eqref{TkBratteli} in the sense that if the set of vertices on level $k$ is
\begin{enumerate}
\item[] $\hat T^{/\mu}_k = \{ \mu_1-\mu_2+k, \mu_1-\mu_2+k-2, \ldots, \mu_1-\mu_2-k\}
\cap \ZZ_{\ge 0}$, and
\item[] $\hat T_k^{\lambda/\mu} =
\{ \hbox{paths $p=(\mu=p^{(0)} \to p^{(1)} \to \cdots\to p^{(k)} = \lambda/\mu)$ 
to $\lambda/\mu$ in $\hat T^{/\mu}$}\}$,
\end{enumerate}
for $\lambda/\mu\in \hat T_k^{/\mu}$ then
$$
\hbox{$\hat T_k^{/\mu}$ is an index set for the simple $T_k^a$ modules $T_k^{\lambda/\mu}$ 
appearing in $L(\mu)\otimes V^{\otimes k}$,}
$$
and
$$T_k^{\lambda/\mu}\qquad\hbox{has a basis}\qquad \{v_p\ |\ p\in \hat T_k^{\lambda/\mu}\}$$
with
$$m_i v_p = \begin{cases}
\pm[p^{(i-1)}+1] v_p, &\hbox{if $p^{(i-1)}\pm 1 = p^{(i-2)} = p^{(i)}$,} \\
0, &\hbox{otherwise.}
\end{cases}
$$
where $p^{(i)}$ is the partition (a single part in this case) on level $i$ of the path $p$.
\item[(c)] $\kappa
=m_k+[2]m_{k-1}+\cdots+[k]m_1$ is a central element of $T_k^a$
and $\kappa$ acts on 
$T_k^{\lambda/\mu}$ by the constant
$$[k]\frac{q^{-(\mu_1+\mu_2)-(\mu_1-\mu_2)+1}}{q-q^{-1}}
+q^{-(\mu_1+\mu_2)}([\lambda_1-\lambda_2+2]+[\lambda_1-\lambda_2+4]+\cdots
+[\mu_1-\mu_2+k]).
$$
\end{enumerate}
\end{thm}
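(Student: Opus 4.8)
The plan is to compute the eigenvalues directly from the diagonalization already available on the affine Hecke side, and then read off (a) and (c) as essentially formal consequences. Part (a) is immediate: by \eqref{midef} each $m_i$ is a linear combination of $X^{-\varepsilon_i}$ and $X^{-\varepsilon_{i-1}}$, and these commute by Theorem \ref{Thm:AffineMurphy}(a), so the $m_i$ commute in $T_k^a$. For (b) I would realize $T_k^{\lambda/\mu}$ through the Schur--Weyl functor of Proposition \ref{SchurWeylAffineTL}, so that $T_k^a$ acts via $\Phi$; by Theorem \ref{Thm:AffineMurphy}(b) this space carries the basis $\{v_T\}$ indexed by standard tableaux of shape $\lambda/\mu$ --- equivalently by the paths $p$ of \eqref{TkBratteli} --- on which every $X^{\pm\varepsilon_i}$ is diagonal, $X^{\varepsilon_i}v_T=q^{2c(T(i))}v_T$. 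Since $m_i$ is a linear combination of $X^{-\varepsilon_i}$ and $X^{-\varepsilon_{i-1}}$, it is diagonal in this basis, and all of (b) reduces to evaluating \eqref{midef} on the two scalars $q^{-2c(T(i-1))}$ and $q^{-2c(T(i))}$.

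The heart of (b) is a four-case analysis according to whether boxes $i-1$ and $i$ lie in row $1$ or row $2$ of the two-row shape. If both lie in the same row then, being added at consecutive steps, they occupy consecutive columns, so $c(T(i))=c(T(i-1))+1$ and hence $q^{-2c(T(i))}=q^{-2}q^{-2c(T(i-1))}$; the combination $X^{-\varepsilon_i}-q^{-2}X^{-\varepsilon_{i-1}}$ in \eqref{midef} then annihilates $v_p$, giving $m_iv_p=0$. These are exactly the paths with $p^{(i-2)}\neq p^{(i)}$. In the two mixed cases the path has $p^{(i-2)}=p^{(i)}$ with a local maximum ($p^{(i-1)}-1=p^{(i)}$) or a local minimum ($p^{(i-1)}+1=p^{(i)}$) at level $i-1$; converting the $\fgl_2$ contents into the single path-label $p^{(j)}=(\text{shape})_1-(\text{shape})_2$ (using $(\text{shape})_1+(\text{shape})_2=|\mu|+j$) and collapsing the resulting difference of two $q$-powers via $\frac{q^{a}-q^{-a}}{q-q^{-1}}=[a]$ produces a single $q$-integer $[p^{(i-1)}+1]$, with the sign $\mp$ recording maximum versus minimum. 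The prefactor $q^{i-2}$ in \eqref{midef} is precisely what cancels the $i$-dependence carried by the contents, so that the surviving eigenvalue is an $\fsl_2$-intrinsic quantity.

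For (c), Proposition \ref{eqvforms}(b) with $i=k$ gives $\kappa=m_k+[2]m_{k-1}+\cdots+[k]m_1=\frac{q^{k-2}}{q-q^{-1}}(X^{-\varepsilon_1}+\cdots+X^{-\varepsilon_k})$, a scalar multiple of the first power sum in the $X^{-\varepsilon_i}$. Power sums are symmetric in the $X^{\varepsilon_i}$ and hence lie in the center of $\tilde H_k$ (the standard description of that center); their images are therefore central in the quotient $T_k^a$, which proves centrality of $\kappa$. The eigenvalue on $T_k^{\lambda/\mu}$ is then $\frac{q^{k-2}}{q-q^{-1}}\sum_{b\in\lambda/\mu}(\text{eigenvalue of }X^{-\varepsilon}\text{ at }b)$, and summing the content contributions row by row and regrouping into $q$-integers telescopes into the stated closed form, the two summands recording the row-$1$ and row-$2$ contributions together with a boundary correction.

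The main obstacle is the normalization bookkeeping in (b) and (c). The contents $c(T(i))$ are $\fgl_2$-data and carry a global shift governed by $|\mu|=\mu_1+\mu_2$, whereas the answers are written in the $\fsl_2$/Temperley--Lieb labels $p^{(j)}$; reconciling these forces one to keep careful track of the determinant-representation twist (the scalar $\alpha$ with $\alpha^2=-q^{2m-1}$ of Proposition \ref{SchurWeylAffineTL}) relating $\Phi(X^{\varepsilon_i})$ to the action of the $T_k^a$-generator $X^{\varepsilon_1}=T_1^{-1}\cdots T_{k-1}^{-1}\tau^{-1}$, since $\tau$ is rescaled by $\alpha$ under \eqref{AffineToAffineTL}. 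Getting the signs and powers of $q$ exactly right --- in particular matching the exceptional element $m_1$ against the uniform pattern for $m_i$ ($i\ge2$) in the central-element computation --- is where essentially all the genuine work lies; the structural statements (commutativity, vanishing away from local extrema, and centrality of the power sum) are by comparison formal.
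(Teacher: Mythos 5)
Your proposal is correct and takes essentially the same route as the paper: commutativity of the $m_i$ as linear combinations of the commuting $X^{-\varepsilon_i}$, diagonalization on the tableau/path basis coming from Theorem \ref{Thm:AffineMurphy}(b) via Schur--Weyl followed by the same local-extremum case analysis of \eqref{midef} for part (b), and Proposition \ref{eqvforms}(b) together with the content-sum regrouping for part (c). Your explicit centrality argument for $\kappa$ (the power sum $\sum_i X^{-\varepsilon_i}$ is a symmetric function of the $X^{\varepsilon_i}$, hence central in $\tilde H_k$, and centrality passes through the surjection \eqref{AffineToAffineTL}) and your flagging of the residual $q^{-|\mu|}$ carried by the $\fgl_2$ contents are details the paper leaves implicit, so nothing further is needed.
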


\begin{proof}
(a)  The elements $X^{\varepsilon_i}$ commute with one another in the affine Hecke algebra (see
(\ref{XiCommuteInAffine}) and the $m_j$ are by definition linear combinations of the 
$X^{\varepsilon_i}$ (see \ref{midef}), so they commute. 

(b)  Let $p$ be a path to $\lambda/\mu$ in $\hat T^{\mu}$ and let 
$T$ be the corresponding standard tableau on 2 rows.  
If $p^{(i)}=p^{(i-1)}-1=p^{(i-2)}-2$ or
If $p^{(i)}=p^{(i-1)}+1=p^{(i-2)}+2$ 
then $c(T(i-1)) =  c(T(i)) - 1$ and, 
from \eqref{midef} and Theorem \ref{Thm:AffineMurphy}(b), 
$$
m_i v_T =q^{i-2} \frac{q^{-2c(T(i))}-q^{-2}q^{-2c(T(i-1))}}{q-q^{-1}} v_T
= q^{i-2}\frac{q^{-2c(T(i))}-q^{-2}q^{-2c(T(i))+2}}{q-q^{-1}} 
v_T
= 0.
$$
If $p^{(i)}=p^{(i-2)}=p^{(i-1)}-1$
with $T^{(i-1)} = (a,b)$
then $c(T(i))=a$ and $c(T(i-1))=b-2$ and
$$
m_i v_T 
= q^{i-2}\frac{q^{-2a}-q^{-2}q^{-2 b + 4}}{q-q^{-1}} v_T
= q^{i}\frac{q^{-(a+b+1)} \left( q^{-(a-b+1)} - q^{(a-b+1)}\right) }{q-q^{-1}} 
= -q^{-m}[a-b+1] v_T,
$$
where $m=|\mu|=a+b-i+1$.
If $p^{(i)}=p^{(i-2)}=p^{(i-1)}+1$
with $T^{(i-1)} = (a,b)$
then $c(T(i-1))=a-1$ and $c(T(i))=b-1$ and
$$
m_i v_T 
=q^{i-2} \frac{q^{-2b+2}-q^{-2}q^{-2 a+2}}{q-q^{-1}} v_T
= q^{i}\frac{q^{-(a+b+1)} \left(q^{(a-b+1)} - q^{-(a-b+1)}\right) }{q-q^{-1}} 
= q^{-m}[a-b+1] v_T,
$$
where $m=|\mu|=a+b-i+1$.

(c)  Let $k = |\lambda/\mu|$. The identity  
$$q^{\lambda_1 + \lambda_2 -2}  \sum_{b \in \lambda/\mu} q^{-2 c(b) } \\
= \left( \sum_{i=\mu_2}^{\lambda_2-1} [\lambda_1 + \lambda_2 -2i] (q-q^{-1})  \right)  + [k] q^{\mu_2-\mu_1 + 1},  
$$
is best visible in an example:
With $\lambda = (10,6)$ and $\mu = (4,2)$,
\begin{align*}
&q^{16-2}\left(
\begin{array}{llllllllll}
\phantom{+}0&+0&+0&+0 &+q^{-8} &+q^{-10} &+q^{-12} &+q^{-14} &+q^{-16} &+q^{-18} \\
+0&+0 &+q^{-2} &+q^{-4} &+q^{-6} &+q^{-8}
\end{array}
\right) \\
&=
\begin{array}{llllllllll}
\phantom{+}0&+0&+0&+0 &+q^{6} &+q^{4} &+q^{2} &+q^{0} &+q^{-2} &+q^{-4} \\
+0&+0 &+q^{12} &+q^{10} &+q^{8} &+q^{6}
\end{array}
\\
&=
\left(
\begin{array}{llllllllll}
\phantom{+}0&+0&+0&+0 &+0 &+0 &+0 &+0 &+0 &+0 \\
+0&+0 &(q^{12}-q^{-12}) &+(q^{10}-q^{-10}) &+(q^{8}-q^{-8}) &+(q^{6}-q^{-6})
\end{array}
\right) \\
&\quad+\left(
\begin{array}{llllllllll}
\phantom{+}0&+0&+0&+0 &q^{-4} &+q^{-2} &+q^0 &+q^2 &+q^4 &+q^6 \\
+0&+0 &+q^{-12} &q^{-10} &+q^{-8} &+q^{-6}
\end{array}
\right) \\
&=\left(\sum_{i=2}^{6-1} q^{16-2i}-q^{-(16-2i)}\right)+[10]q^{4-2+1}.
\end{align*}
Then Proposition \ref{eqvforms} says 
$$
X^{-\varepsilon_1}+\cdots + X^{-\varepsilon_k}
= q^{-(k-2)}(q-q^{-1})(m_k+[2]m_{k-1}+\cdots + [k]m_1),
$$
and so $m_k+[2]m_{k-1}+\cdots + [k]m_1$ acts on $T_k^{\lambda/\mu}$ by the constant
\begin{align*}
(q-&q^{-1})^{-1}q^{k-2}\sum_{b \in \lambda/\mu} q^{-2 c(b) } =
(q-q^{-1})^{-1}q^{-(\mu_1+\mu_2) }q^{\lambda_1 + \lambda_2 -2} \sum_{b \in \lambda/\mu} q^{-2 c(b) } \\
&=(q-q^{-1})^{-1}q^{-(\mu_1+\mu_2) }
\left([k] q^{\mu_2-\mu_1 +1} 
+ \sum_{i=\mu_2}^{\lambda_2-1} [\lambda_1 + \lambda_2 -2i] (q-q^{-1})  \right) \\
&= [k]\frac{q^{-m-p^{(0)}+1}}{q-q^{-1}}
+\sum_{i=\mu_2}^{\lambda_2-1} q^{-m}[m+k-2i] \\
&= [k]\frac{q^{-m-p^{(0)}+1}}{q-q^{-1}}
+q^{-m}([p^{(k)}+2]+[p^{(k)}+4]+\cdots+[p^{(0)}+k-2]+[p^{(0)}+k]),
\end{align*}
since $\mu_1+\mu_2=m$, $\mu_1-\mu_2 = p^{(0)}$, $\lambda_1+\lambda_2=m+k$
and $\lambda_1-\lambda_2=p^{(k)}$.  

 \end{proof}


\end{section}

\end{document}